\newtheorem{theorem}{Theorem}[section]
\newtheorem{lemma}[theorem]{Lemma}
\newtheorem{proposition}[theorem]{Proposition}
\newtheorem{corollary}[theorem]{Corollary}
\theoremstyle{definition}
\newtheorem{definition}[theorem]{Definition}
\newtheorem{question}[theorem]{Question}
\newtheorem{example}[theorem]{Example}
\newtheorem{notation}[theorem]{Notation}
\newcommand{\R}{\mathcal{R}}
\newcommand{\Z}{\mathbb{Z}}
\newcommand{\N}{\mathbb{N}}
\newcommand{\CA}{\mathcal{A}}
\newcommand{\CM}{\mathcal{M}}
\newcommand{\CF}{\mathcal{F}}
\newcommand{\CL}{\mathcal{L}}
\def\ts{{\texttt s}}
\def\tp{{\texttt p}}
\def\tf{{\texttt f}}
\def\tu{{\texttt u}}
\newcommand{\Aut}{\mathrm{Aut}}
\newcommand{\ttp}{\mathrm{top}}
\newcommand{\per}{\mathrm{Per}}
\title[Strong Approximations of Shifts]{Strong Approximations of Shifts and the Characteristic Measures Problem}
\author{Van Cyr}
\address{Bucknell University, Lewisburg, PA 17837 USA}
\email{van.cyr@bucknell.edu}
\author{Bryna Kra}
\address{Northwestern University, Evanston, IL 60208 USA}
\email{kra@math.northwestern.edu}
\author{Samuel Petite}
\address{Universit\'e de Picardie Jules Verne, Amiens, 80039 France}
\email{samuel.petite@u-picardie.fr}
\subjclass[2010]{37B10, 68R15, 37A15}
\keywords{subshift, block complexity, automorphism, invariant measure}
\thanks{ B. Kra acknowledges the support of National Science Foundation grant DMS-205464 and S. Petite  that of ANR IZES-ANR-22-CE40-0011.}
\begin{document}
\maketitle
\begin{abstract}
Every symbolic system supports a Borel measure that is invariant under the shift, but it is not known if every such systems supports a measure that is invariant under all of its automorphisms; known as a characteristic measure.  We give sufficient conditions to find a characteristic measure, additionally showing when it can be taken to be a measure of maximal entropy.  The class of systems to which these sufficient conditions apply is large, containing a dense $G_{\delta}$ set in the space of all shifts on a given alphabet, and is also large in the sense that it is closed under taking factors.  We also investigate natural systems to which these sufficient conditions apply.
\end{abstract}
\section{Introduction}
By the Kryloff-Bogoliouboff Theorem~\cite{KB}, any topological dynamical systems $(X,T)$ supports a $T$-invariant Borel probability measure.  A natural question is when such a system supports a measure, called characteristic by Frisch and Tamuz~\cite{FT2},  that is invariant under the symmetries associated to the system, namely invariant under every automorphism of the system.  
We note that this question is topological in nature.  Given a topological system and a given Borel measure, the property of being a characteristic measure is not a measure theoretic isomorphism invariant: the Jewett-Krieger Theorem implies that any ergodic measure preserving system is measure theoretically isomorphic to a uniquely ergodic topological system, and hence to a system having a characteristic measure.

Frisch and Tamuz~\cite{FT2} proved that every zero entropy symbolic system (on a finite alphabet) has such a measure, while also observing that not every topological dynamical system does.  Specifically, the identity map acting on the Cantor set does not.  Symbolic systems, however, have special features not shared by all topological systems and results in the literature show that symbolic systems often have a characteristic measure: any mixing shift of finite type does~\cite{Parry}, any shift with zero entropy does~\cite{FT2}, and any shift that contains periodic points supports such a measure as well.  With these observations in mind, Frisch and Tamuz asked if every symbolic system has a characteristic measure~\cite{FT2}.  The goal of the present work is to make progress on this problem.

In an approach to the characteristic measures problem, a new class of shifts, the language stable shifts, is defined in~\cite{CK21}, where it was shown that each such shift supports a characteristic measure of maximal entropy.  The class of language stable shifts is large in the sense that it forms a dense $G_\delta$ in the space of all shifts on a fixed alphabet, as well as in the subspace of shifts whose entropy is bounded from below by some $h\geq 0$.  It also contains all shifts of finite type.  One might ask for other senses in which the class of language stable shifts is large and, specifically we consider if this class is closed under taking factors, and a first result proved in Section~\ref{sec:strictly-sofic} shows that it is not. 

\begin{theorem}
\label{th:sofic-not-LSS}
Any shift that is both language stable and is sofic is a shift of finite type. 
\end{theorem}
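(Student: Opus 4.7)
The plan is to argue by contraposition: I would assume that $X$ is sofic but not a shift of finite type and produce a witness that $X$ is not language stable.

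The structural ingredient is Weiss's follower-set characterization of sofic shifts. For a word $w \in \mathcal{L}(X)$, let $F_X(w) = \{v : wv \in \mathcal{L}(X)\}$. Then $X$ is sofic if and only if the collection $\{F_X(w) : w \in \mathcal{L}(X)\}$ is finite, and within the class of sofic shifts, $X$ is a shift of finite type if and only if there exists $N$ such that $F_X(w)$ depends only on the suffix of $w$ of length $N$. Consequently, if $X$ is sofic but not an SFT, then for every $N$ there are words $u_N, w_N \in \mathcal{L}(X)$ sharing a common suffix of length $N$ while $F_X(u_N) \neq F_X(w_N)$.

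Armed with this, I would consider the canonical decreasing sequence of $N$-step SFT covers $X^{(N)}$ of $X$, where $X^{(N)}$ is defined by forbidding all words of length $N+1$ not in $\mathcal{L}(X)$. These satisfy $X \subseteq X^{(N)}$, $\mathcal{L}_N(X^{(N)}) = \mathcal{L}_N(X)$, and $X^{(N)} \to X$ in the Hausdorff topology on subshifts. Since $X$ is sofic but not SFT, the inclusion is strict for every $N$, and the witnessing ``extra'' points of $X^{(N)}$ come precisely from the pairs $u_N,w_N$ above: one can splice the tail of $w_N$ onto $u_N$ to obtain sequences in $X^{(N)}\setminus X$. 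This presents $X$ as the limit of a strictly decreasing sequence of SFTs, each agreeing with $X$ on all blocks of length at most $N$---the archetypal obstruction to any reasonable notion of ``language stability.''

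The main obstacle, which I expect to be routine once the definition of language stability from~\cite{CK21} is unpacked, is verifying that the sequence $(X^{(N)})$ formally witnesses the failure of the defining condition at $X$. I expect this to reduce to a short check, since the gap between $X$ and its SFT covers $X^{(N)}$ is precisely the sort of discontinuity in the ``$N$-th language'' assignment $Y \mapsto \mathcal{L}_N(Y)$ that the definition is crafted to exclude; thus sofic-but-not-SFT shifts should form the prototype of what language stability forbids, and the argument should conclude by running the above dichotomy through that definition.
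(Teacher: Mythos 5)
There is a genuine gap, and it is located exactly at the step you describe as ``routine.'' Your argument establishes only that the SFT covers satisfy $X \subsetneq X^{(N)}$ for every $N$, equivalently that $X$ has minimal forbidden words of arbitrarily large length. But that is precisely the statement that $X$ is \emph{not a shift of finite type} --- it is not the negation of language stability. Language stability (Section~\ref{sec:LSS}) asks that the set $LS(X)$ of lengths of minimal forbidden words have zero \emph{lower uniform density}, i.e.\ that for every $k$ there exist $n$ with $X_n = X_{n+k}$; a shift can fail to be an SFT and still be language stable, and indeed the whole point of the paper is that such shifts are abundant (Example~\ref{ex:nonempty}, the dense $G_\delta$ of Proposition~\ref{prop:g-delta}, aperiodic linear complexity shifts, many $\beta$-shifts). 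So ``strictly decreasing SFT covers'' is not ``the archetypal obstruction to language stability''; it is the archetypal obstruction to being an SFT, which is your hypothesis, not your conclusion. To negate language stability you must produce minimal forbidden words whose lengths occupy a positive fraction of \emph{every} sufficiently long window --- in particular, lengths with bounded gaps --- and nothing in your splicing of $u_N$ and $w_N$ controls the gaps between the lengths you produce.

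The missing idea is a pumping argument. The paper takes the (rational) set of forbidden words of the sofic shift, picks a minimal forbidden word $w$ longer than the number of states of a recognizing automaton, finds a repeated state along the accepting path for $w$, and pumps the intermediate loop: this yields minimal forbidden words of every length in an arithmetic progression $\ell + n(j-i)$, $n \ge 0$ (minimality is preserved because no proper prefix or suffix of the pumped word reaches an accepting state). An arithmetic progression has positive lower uniform density, which kills language stability. Your follower-set machinery is not wasted --- the finitely many follower sets are essentially the states of such an automaton, and one could run the same pumping argument through the pairs $u_N, w_N$ --- but as written your proof stops one decisive step short, and the step it omits is the one that distinguishes ``sofic non-SFT'' from the much weaker property your argument actually verifies.
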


Thus any sofic shift that is not a shift of finite type is not language stable, but is a factor of a language stable shift.
This motivates our introduction in Section~\ref{sec:well-approx} of a class of shifts that are  sufficiently well approximated by shifts of finite type in order that properties of characteristic measures are preserved. 
 
Returning to the characteristic measures question we note that sofic shifts are known to have such measures for other reasons (for example, because they have periodic points or, in the irreducible case, because they are intrinsically ergodic).  But these properties are not shared by all factors of language stable shifts, and even language stable shifts themselves need not have periodic points or only finitely many ergodic measures of maximal entropy.  This naturally leads us to the question of whether every subshift factor of a language stable shift has a characteristic measure and whether this measure can be taken to be a measure of maximal entropy.  To investigate these questions, in Section~\ref{sec:well-approx} we define a notion that we call  well-approximable language stable shifts, quantifying how well the shift can be approximated by shifts of finite type, loosely analogous to how well a real number can be approximated by rationals of a given denominator.  We show that well-approximable language stable shifts remain a large set, in the sense that they continue to contain a dense $G_{\delta}$ in the space of all shifts (see Proposition~\ref{prop:g-delta}), and that symbolic factors of such shifts have characteristic measures of maximal entropy (see Theorem~\ref{th:char-periodic} for the precise statement).
\begin{theorem}
Every subshift factor of a well-approximable language stable shift has a characteristic measure that is a measure of maximal entropy.
\end{theorem}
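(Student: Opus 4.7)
The plan is to combine the characteristic measures known to exist on sofic shifts with the quantitative SFT-approximation afforded by well-approximability. Let $\pi\colon X\to Y$ be a factor map, where $X$ is a well-approximable language stable shift. By hypothesis, there is a sequence of SFTs $X_n\supseteq X$ approximating $X$ in a controlled fashion. Setting $Y_n:=\pi(X_n)$, each $Y_n$ is a sofic shift containing $Y$, with $Y_n\to Y$. Because every sofic shift carries a characteristic measure of maximal entropy --- intrinsic ergodicity of each irreducible component, plus averaging over the finite $\Aut(Y_n)$-orbit of maximal-entropy components --- one obtains for each $n$ a characteristic measure $\mu_n$ on $Y_n$ of maximal entropy. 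A weak-$*$ accumulation point $\mu$ of $\{\mu_n\}$ is the candidate characteristic measure on $Y$.

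Shift-invariance of $\mu$ and the fact that $\mu(Y)=1$ (since $\mu$ is supported on $\bigcap_n Y_n = Y$) are immediate. For maximal entropy, I would invoke the well-approximability condition to get a quantitative rate $h(Y_n)\to h(Y)$, and combine this with $h(\mu_n)=h(Y_n)$ together with a standard partition-function estimate at each finite scale; this should force $h(\mu)=h(Y)$, bypassing the usual failure of upper semicontinuity of entropy.

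The main obstacle is showing $\mu$ is invariant under every $\phi\in\Aut(Y)$. Given such a $\phi$ of block radius $r$, with inverse $\phi^{-1}$ of block radius $r'$, its local rule is a map $L_{2r+1}(Y)\to \A$. The role of well-approximability is precisely to ensure that for $n$ large: (i) $L_N(Y_n)=L_N(Y)$ for $N=2\max(r,r')+1$, so the local rules of $\phi$ and $\phi^{-1}$ define sliding block codes $\phi_n\colon Y_n\to \A^{\Z}$ and $\psi_n\colon Y_n\to \A^{\Z}$; (ii) the images of $\phi_n$ and $\psi_n$ lie in $Y_n$; and (iii) they are mutual inverses on $Y_n$, so $\phi_n\in\Aut(Y_n)$. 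Once this lift is secured, $\mu_n$ being characteristic on $Y_n$ gives $(\phi_n)_*\mu_n=\mu_n$, and because $\phi_n$ and $\phi$ coincide on every cylinder of length $N$ for $n$ large, passing to the weak-$*$ limit yields $\phi_*\mu=\mu$.

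The hard part will be conditions (ii) and (iii): upgrading a sliding block code induced by an automorphism of the factor $Y$ to a genuine automorphism of each approximating sofic shift $Y_n$. Language stability of $X$ ensures that the local rule of $\phi$ is \emph{unambiguous} at every finite scale, and well-approximability is designed to quantify how quickly the allowed words defining $X_n$ stabilize relative to the radii $r, r'$; these two properties together should provide the desired lift. This is precisely the reason for strengthening language stability: as Theorem~\ref{th:sofic-not-LSS} illustrates, ordinary language stability already fails to be preserved under sofic factors, so some quantitative SFT-control must be built into the hypothesis to make the extraction of a characteristic measure on a factor possible.
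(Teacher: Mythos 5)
Your overall architecture (approximate $Y$ by the sofic shifts $Y_n=\pi(X_n)$, put a characteristic measure of maximal entropy on each $Y_n$, pass to a weak* limit) matches the paper's, and your entropy argument is fine since entropy is upper semicontinuous on invariant measures of a subshift. But the step you flag as ``the hard part'' --- upgrading the sliding block code of $\phi\in\Aut(Y)$ to a genuine automorphism $\phi_n$ of $Y_n$, i.e.\ your conditions (ii) and (iii) --- is a genuine gap, and it is not merely unproved but is the wrong thing to aim for. The local rule of $\phi$ does extend to $Y_n$ once $\mathcal{L}_{2r+1}(Y_n)=\mathcal{L}_{2r+1}(Y)$, but there is no reason its image lands back in $Y_n$: a word of length $k$ in $\phi_n(y)$ is computed from a word of length $k+2r$ of $Y_n$, and for $k+2r$ beyond the scale at which $\mathcal{L}(Y_n)$ and $\mathcal{L}(Y)$ agree, that input word need not belong to $\mathcal{L}(Y)$, so its image need not belong to $\mathcal{L}(Y_n)$. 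Neither language stability nor well-approximability repairs this; $\phi_n(Y_n)$ is in general a \emph{different} sofic shift, and asserting ``these two properties together should provide the desired lift'' leaves the essential difficulty unaddressed. Without the lift, the identity $(\phi_n)_*\mu_n=\mu_n$ has no justification, and your argument for $\phi_*\mu=\mu$ collapses.

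The paper's proof of Proposition~\ref{prop:well-approx-factors} deliberately avoids the lift. It shows only that $\beta$ is a topological \emph{conjugacy} from $Y_n$ onto the possibly different sofic shift $\beta(Y_n)$, and that $\mathcal{L}_k(Y_n)=\mathcal{L}_k(\beta(Y_n))$ for all $k\le n+T-2R_\beta$, where $T=\tau(n)$ is the (enormous) approximation gap. Lemma~\ref{lem:language_periodic} converts this language agreement into equality of the sets of periodic points of period up to $A(n,n,n,n,n)$, and Theorem~\ref{th:char_sofic} supplies a \emph{canonical}, conjugacy-equivariant measure $\mu_{Z}$ on every sofic $Z$ (the limit of uniform measures on $\per_{\le s}(Z)$ along the universal sequence $\mathcal{S}$ of Lemma~\ref{lem:consistent}), so that $\beta_*\mu_{Y_n}=\mu_{\beta(Y_n)}$ while $\mu_{Y_n}$ and $\mu_{\beta(Y_n)}$ agree to within $2/n$ on cylinders because they are approximated by the \emph{same} periodic points. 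The rate $\omega(n)$ is defined precisely so that the gap $T$ dominates the period threshold needed for this comparison. So the two ingredients you are missing are: (a) replacing ``lift $\phi$ to $\Aut(Y_n)$'' by ``compare $Y_n$ with its image $\phi(Y_n)$ via shared low-period points,'' and (b) choosing the measures $\mu_n$ not as arbitrary characteristic measures but by a construction that is natural under conjugacy and has a uniform, quantifiable rate of approximation by periodic orbit measures.
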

We give an effective rate of approximation by subshifts of finite type that guarantees the existence of a characteristic measure (but not necessarily a characteristic measure of maximal entropy) in Theorem~\ref{th:char-periodic}.  We also give a faster (and ineffective) rate of approximation that guarantees the existence of a characteristic measure of maximal entropy in Proposition~\ref{prop:well-approx-factors}.

If every shift were a factor of a well-approximable language stable shift, we would then be able to conclude that every symbolic system supports a characteristic measure.  Although we do not rule this possibility out entirely, in Section~\ref{sec:not-all-wa}, we construct an example of a shift that is not a factor of any sufficiently well-approximable language stable shift. However, we do not answer the question if every subshift factor of a language stable shift has a characteristic measure.

We conclude in Section~\ref{sec:large-LSS} by showing that the class of language stable shifts is closed under several natural operations, such as passing to a power of the shift or more generally passing to any speed-up.  
We continue the section by exhibiting language stable shifts in other well-studied classes of shifts, and include a characterization of which $\beta$-shifts are language stable.  In Section~\ref{sec:linear}, we show that any aperiodic linear complexity shift is language stable.

This leaves open numerous questions about language stable shifts and their characterizations, including Question~\ref{question:complexity-theshold}  on the complexity threshold for a shift that is not language stable and questions in Section~\ref{sec:questions-on-WA} on quantitative bounds for well-approximable shifts.

\section{Classes of shifts and their properties}

\subsection{Symbolic systems and subshifts}
Assume that $\CA$ is a finite set, the {\em alphabet}, and we denote $x\in \CA^\Z$  by $x = (x_n)_{n\in\Z}$.  
The space $\CA^\Z$ is a compact metric space when endowed with the metric $d$ defined by 
$$d\bigl((x_n), (y_n)\bigr) = 2^{-\inf\{|n|: x_n\neq y_n\}}.$$
The {\em left shift} $\sigma\colon \CA^\Z\to \CA^Z$ defined as $(\sigma x)_n = x_{n+1}$ for all $n\in\Z$ is a homeomorphism of $\CA^\Z$.  If $X\subset\CA^\Z$ is a (non-empty) closed and $\sigma$-invariant, then the pair $(X, \sigma)$ is a {\em shift}, sometimes also referred to as a {\em subshift}.  We also sometimes omit the transformation $\sigma$ from the notation and refer to $X$ as a shift. 

The {\em cylinder sets}, meaning the sets where finitely many coordinates are determined, form a basis for the topology on the space $X$. 

\subsection{Presenting a shift via its language}
\label{sec:language}

If $\CA$ is a finite alphabet, $\CA^*$ denotes all finite words in the alphabet $\CA$. The concatenation of two words $u,v \in \CA^*$ is denoted by $uv$ and the word  $u$ is called a \emph{prefix} of $uv$ and the word $v$ is a \emph{suffix} of $uv$.  If $u$ is nonempty, then we say that it is a \emph{strict} prefix, and similarly when $v$ is nonempty it is a \emph{strict} suffix. We say that the word $v$ \emph{occurs} in the finite word $uv$ and  
use the same terminology in infinite words. For $u\in\CA^*$, we let $|u|$ denote the length of the word $u$,  and for $n\geq 1$, the word $u^n$  denotes the word of length $n|u|$ obtained by concatenating $u$  with itself $n$ times.

If $\CF \subset  \CA^*$, then the shift $X_\CF$ associated to the collection $\CF$ of \emph{forbidden words}  is defined by
$$
X_\CF= \{x \in \CA^\Z\colon  \textrm{ no subword of }x \textrm{ belongs to } \CF  \}.
$$
Conversely, any shift $X$ is defined  by a collection of forbidden words: if 
$$ 
\mathcal{F}(X):=\{w\in\CA^*\colon w\text{ does not occur in any element of }X\} 
$$ 
then $X = X_{\CF(X)}$.

For the shift $(X, \sigma)$, the set of all allowable words is the \emph{language} of the shift and is denoted by $\CL(X)$.  
The word $w\in\CL(X)$ is \emph{left special} if there exist distinct letters $a, b\in\CA$ such that both $aw, bw\in\CL(X)$, and similarly if 
there exist distinct letters $a, b\in\CA$ such that $wa, wb\in\CL(X)$, we say that it is \emph{right special}. 
A word is \emph{bispecial} if it is both left and right special.

We write $\CL_n(X)$ for the words of length $n$ in the language $\CL(X)$, meaning that $\CL_n(X) = \CL(X)\cap \CA^n$.  The {\em complexity} of a shift $X$ is defined to be  function  $p_X\colon\N\to\N$ given by $p_X(n)= | \CL_n(X)|$ 
and the exponential growth rate of $p_X(n)$ is the {\em (topological) entropy} of the shift $(X, \sigma)$ and is denoted by $h_{\ttp}(X)$.

Any word $w\notin\mathcal{L}(X)$ is said to be \emph{not allowed} in $X$.  When the set $\CF$ of forbidden words 
is finite, the shift $(X_{\CF}, \sigma)$ is known as a \emph{shift of finite type}.

Given a shift $(X, \sigma)$, define $\mathcal{F}_n:=\mathcal{F}(X)\cap\CA^n$ (the forbidden words in $X$ with length at most $n$) and let $X_n:=X_{\mathcal{F}_n}$ be the shift of finite type defined by the set of forbidden words $\mathcal{F}_n$.  Then we can write $X = \bigcap_{n=1}^\infty X_n$ as the intersection of a canonical, descending chain of nested shifts of finite type and we call the sequence of shifts $(X_n)_{n\in\N}$ the {\em SFT cover} of X.  Note that $X$ is a shift of finite type if and only if $X=X_n$ for all sufficiently large $n\in\N$.

For a word $w\in\CL(X)$, we let $[w]$ denote the cylinder set determined by the word $w = w_1\cdots w_{|w|}$, meaning the collection of all $x\in X$ such that $x_i = w_i$ for all $1\leq i\leq|w|$.

\subsection{Language stable shifts}
\label{sec:LSS}
We recall a notion introduced in \cite{BMR}. If $X$ is a shift with forbidden words $\CF(X)$,  let   
$$
\CM(X) =\{w \in \CF(X) \colon \textrm{no proper subword of } w \textrm{ lies in } \CF(X) \}
$$
denote the \emph{minimal forbidden words} in the language $\CL(X)$ of the shift $X$. 
We note that  $\CL(X) = \CA^* \setminus \CA^* \CM(X)  \CA^*$,  meaning that the allowed words are all the words that do not contain a minimal forbidden word.  So   the sets $\CF(X)$ and $\CM(X)$  define the same shift, meaning that $X_{\CF(X)} = X_{\CM(X)}$.

We note that if $a,b\in \CA$ and the word $aub$ is a minimal forbidden word  of $X$, then the word $u$ is bispecial (see~\cite[Remark 3]{BMR}).

For each $n>0$, set  $\CM_n(X) =\CM(X) \cap \CA^n $ to be the set of minimal forbidden word of length $n$.
A shift $(X,\sigma)$ is \emph{language stable} if the set of lengths
\begin{equation}
\label{def:LSS2}
 LS(X) = \{ n \in \N\colon \CM_n(X) \neq \emptyset\}
\end{equation}
has zero lower uniform density, meaning that 
\[\lim_{k\to\infty} \inf_{n\in\N} \frac{|LS(X)\cap [n+1,n+k]|}{k}  =0.
\]
Equivalently, if $X$ is a shift and $(X_n)_{n\in\N}$ is its SFT cover, then $X$ is language stable if and only if for all $k\in\N$ there exists $n\in\N$ with $X_n=X_{n+k}$.

We introduce a new definition to quantify the relationship between the parameters $n$ and $k$ in the definition of language stable.

\begin{definition}\label{def:well-approx}
Suppose $\alpha\colon\N\to\N$ is non-decreasing and unbounded.  A shift $X$, with SFT cover $(X_n)_{n\in\N}$ is {\em well-approximable at rate $\alpha$} if there are infinitely many $n$ such that $X_n=X_{n+\alpha(n)}$.
\end{definition}

Thus the set of language stable shifts is the union over all non-decreasing and unbounded functions $\alpha$ 
of the shifts that are well-approximable at rate $\alpha$.  In theorems about well-approximable shifts, we are explicit about the rate $\alpha$, but when speaking more informally about results we  sometimes just call a shift well-approximable with the understanding that it is well-approximable at some (typically fast) rate.

We include an example showing that this notion of well-approximable  is not vacuous, as up to finitely many elements,  any set of lengths $LS(X)$ arises. 

\begin{example}
\label{ex:nonempty}
For any set $L\subset \N$, there is a subshift $X$ such that the set of lengths of its minimal forbidden words $L S(X)$  is all but finitely many elements of  $L$. To check this, it suffices to consider the subshift on 3 letters $\{0, 1, 2\}$ with words of the form $\{ 0 1^{n}0: n \in L\cap 2\N\} \cup \{0 2^{n}0:  n \in L \cap (2\N+1)\}$ forbidden. 
Since the words of the form $01^{n}0$ for odd $n$ are allowed, such words with even $n$ are minimal forbidden words;  
the analogous property holds for words of the form $02^{n}0$.
\end{example} 

\subsection{Topological properties of shifts and automorphisms}
The shift $(Y, \sigma)$ is a {\em factor} of the shift $(X, \sigma)$ if there exists a surjective and continuous map $\pi\colon X\to Y$ satisfying $\pi\circ \sigma = \sigma\circ\pi$, and if additionally the map $\pi$ is injective, we say that the systems are {\em conjugate}. 
A shift is {\em sofic} if it is a factor of a shift of finite type. 

The shift $(X,\sigma)$ is {\em mixing} if for all words $u, v\in\CL(X)$, there exists some $N\in\N$ such that for all $n\geq N$, there is some $w\in\CL_n(X)$ such that $uwv\in\CL(X)$.

An {\em automorphism} of the shift $(X, \sigma)$ is a self conjugacy of the system and we denote the group of all automorphisms of the system by $\Aut(X, \sigma)$, or just $\Aut(X)$ when the shift is clear from the context.   The classic theorem of Curtis, Hedlund, and Lyndon (see~\cite{hedlund}) states that any factor $\phi$ of a shift $(X, \sigma)$ is a block code of some range $R$ for some $R\in\N$, meaning that for all $x\in X$, the symbol that $\phi(x)$ assigns to $0$ is determined by the word $x_{-R}\cdots x_R$ in $x = (x_n)_{n\in\Z}$.  
A block code of range $R$ is a block code of range $S$ for any $S> R$.  The same holds for any automorphism, and without loss we assume throughout that the range of any block code arising from an automorphism $\phi$ is also the range for $\phi^{-1}$, and we refer to such a range as being {\em symmetric}. 

\subsection{Invariant measures}
By the Kryloff-Bogoliouboff Theorem~\cite{KB}, there is always an invariant measure on a shift $(X, \sigma)$. If the entropy of this measure is equal to the topological entropy of the system, then this is a {\em measure of maximal entropy}.  

A measure that is invariant under the full automorphism group $\Aut(X, \sigma)$ is said to be a {\em characteristic measure}.

\section{Strictly sofic shifts are not language stable}
\label{sec:strictly-sofic}
It is shown in~\cite{CK21} that every language stable shift has a characteristic measure of maximal entropy.  This motivates the question of which shifts are not language stable, 
as these are the shifts for which the characteristic measures problem remains open.  Our first result shows that the familiar class of sofic shifts is comprised almost entirely of shifts that are not language stable.  
More precisely, we show that the intersection of the classes of sofic shifts and language stable shifts is exactly the shifts of finite type. 
  
\begin{proof}[Proof of Theorem~\ref{th:sofic-not-LSS}]
Let $\CF\subset\CA^*$ be the list of forbidden words defining the sofic shift $(X, \sigma)$. 
 Since  $(X, \sigma)$ is sofic, 
 the set  $\CF$ is rational, meaning that the collection is recognized by a finite state deterministic  automaton.    
For contradiction, assume that $(X, \sigma)$ is not a shift of finite type.  
Then for any $\ell >0$, there is some
word $w \in \CF$ of length at least $\ell+1$ 
that does not contain some word in $\CF$ as a subword.  Thus we can pick some such word $w$ 
whose length is greater than the number of states of the finite state deterministic automaton. 
The word $w$ corresponds to  some vertex  path $v_0, \ldots, v_\ell$, starting with some initial vertex $v_0$ and ending with the vertex $v_\ell$, and by choice of the length of the word it passes at least 
twice through some vertex.  Thus there exist $i\neq j$ such that $v_i= v_j$. 
However, no $v_{k}$ is an accepting state of the automaton, since the word $v_0\cdots v_{k-1}$ is not one of the forbidden words in the collection $\CF$.  Similarly, the path associated to the word $v_1\cdots v_k$ does not end in an accepting state.  
It follows that  for any $n\ge 0$, 
the edge path associated the path $v_0, \ldots,  v_{i-1}, (v_i, \ldots, v_j)^n , v_{j+1}, \ldots,  v_\ell$ belongs to $\CF$ but no subword belongs to $\CF$. In other words, the lengths of minimal forbidden words has positive density and so the shift $(X, \sigma)$ is not language stable. 
\end{proof}

\section{Characteristic measures for well-approximable language stable shifts}
\label{sec:well-approx}

\subsection{Obtaining a characteristic measure using periodic points}
The goal of this section is to show that factors of well-approximable shifts have characteristic measures, provided the shifts are well-approximable at a sufficiently fast rate. 

\begin{definition}
Let $a,f\in\N$ and let $\mathcal{X}_{a,f}$ denote the set of all subshifts of finite type $X\subseteq\{0,1,\dots,a-1\}^{\Z}$ 
that can be defined using only forbidden words of length at most $f$.  Let $R\in\N$ and let $\mathcal{Y}_{a,f,R}$ 
denote the set of all sofic shifts $Y\subseteq\{0,1,\dots,a-1\}^{\Z}$ for which there exist $X\in\mathcal{X}_{a,f}$ and a  factor map $\varphi\colon X\to Y$ that can be implemented using a block code of range at most $R$.
\end{definition}

It follows immediately from the definitions that up to renaming the letters of the alphabet, every shift of finite type is an element of $\mathcal{X}_{a,f}$ for some choice of $a, f\in\N$.  We check that the same holds for any sofic shift.  

\begin{lemma}\label{lem:sofic}
Up to renaming the letters of the alphabet, for any sofic shift $Y$ there exist $a,f,R\in\N$ such that $Y\in\mathcal{Y}_{a,f,R}$.
\end{lemma}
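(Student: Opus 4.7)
The plan is to unpack the definition of sofic and then match it to the quantifiers in the definition of $\mathcal{Y}_{a,f,R}$. Since $Y$ is sofic, by definition there exists a shift of finite type $X'$ over some finite alphabet $\mathcal{B}$ together with a factor map $\varphi'\colon X' \to Y$. The Curtis--Hedlund--Lyndon theorem (recalled just above in the excerpt) supplies some $R \in \N$ for which $\varphi'$ is implemented by a block code of range $R$. Since $X'$ is a shift of finite type, I may also pick some $f_0 \in \N$ such that $X'$ is defined by a set of forbidden words in $\mathcal{B}^*$ of length at most $f_0$.

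The only real obstacle is the minor bookkeeping point that the definition of $\mathcal{Y}_{a,f,R}$ forces $X$ and $Y$ to sit inside the same ambient space $\{0,1,\dots,a-1\}^{\Z}$, whereas in general the alphabet $\mathcal{B}$ of $X'$ and the alphabet $\CA$ of $Y$ are unrelated. The plan is to resolve this by merging alphabets: after a relabeling, I embed $\CA$ and $\mathcal{B}$ as disjoint subsets of a common alphabet of size $a := |\CA| + |\mathcal{B}|$, which I identify with $\{0,1,\dots,a-1\}$. Both $X'$ and $Y$ now sit inside $\{0,1,\dots,a-1\}^{\Z}$, as required.

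With the alphabets identified, it remains to check that $X'$ and $\varphi'$ meet the length and range constraints in the larger ambient space. The shift $X'$ is still a shift of finite type: in the larger alphabet it is cut out by the union of its original forbidden words (of length at most $f_0$) together with the singleton forbidden words corresponding to the letters of $\CA$ (of length $1$), so $X' \in \mathcal{X}_{a,f}$ for $f := \max(f_0,1)$. The block code $\varphi'$, originally defined on $(2R+1)$-blocks from $\mathcal{B}$, extends to a block code on $(2R+1)$-blocks from $\{0,1,\dots,a-1\}$ of the same range $R$ by, for instance, declaring its value on any window that uses a letter of $\CA$ to be an arbitrary fixed element of $\CA$. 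Since every sequence in $X'$ uses only letters from $\mathcal{B}$, this extended block code still sends $X'$ surjectively onto $Y$, so $Y \in \mathcal{Y}_{a,f,R}$, completing the argument.
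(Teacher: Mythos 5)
Your proof is correct and follows essentially the same route as the paper: extract the SFT cover and the range from the definition of sofic and the Curtis--Hedlund--Lyndon theorem, then merge the alphabets of $X'$ and $Y$ into a common $\{0,1,\dots,a-1\}$. The only (immaterial) difference is that you take the disjoint union of the two alphabets, so $a=|\CA|+|\mathcal{B}|$, whereas the paper identifies both alphabets with initial segments and sets $a=\max\{a_X,a_Y\}$; your extra care in extending the block code to windows containing letters of $\CA$ is harmless but unnecessary, since the code only needs to be defined on $\mathcal{L}_{2R+1}(X')$.
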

\begin{proof}
By renaming the letters of the alphabet if necessary, we can assume that $Y\subseteq\{0,1,\dots,a_Y-1\}^{\Z}$
for some $a_Y\in\N$.  Since $Y$ is sofic, there exists a shift of finite type $X$ and a factor map $\varphi\colon X\to Y$.  Fixing some finite set of forbidden words such that $X$ is the 
shift of finite type obtained by forbidding these words, 
define $f$ to be the maximal length of the words in this set.  
Again renaming the letters of the alphabet if necessary, 
we can choose $a_X\in\N$ such that $X\subseteq\{0,1,\dots,a_X-1\}^{\Z}$.  
Setting $a:=\max\{a_X,a_Y\}$, we have that $X,Y\subseteq\{0,1,\dots,a-1\}^{\Z}$.  Then, by the Curtis-Hedlund-Lyndon Theorem, there exists $R\in\N$ such that $\varphi$ can be implemented by a range $R$ block code 
and so  $Y\in\mathcal{Y}_{a,f,R}$.  
\end{proof}

Note that $\mathcal{Y}_{a,f,R}\subseteq\mathcal{Y}_{a^{\prime},f^{\prime},R^{\prime}}$ provided $a\leq a^{\prime}$, $f\leq f^{\prime}$, and $R\leq R^{\prime}$.  Combining this and Lemma~\ref{lem:sofic}, it follows that the set $\bigcup_{n=1}^{\infty}\mathcal{Y}_{n,n,n}$ contains an isomorphic copy of every sofic shift (where the isomorphism only changes the names of the letters in the alphabet).

\begin{notation}
For a subshift $Y$ and $p\in\N$, let 
$$ 
\per_p(Y)=\{y\in Y\colon\sigma^py=y\} 
$$ 
denote the set of all periodic points in $Y$ of (not necessarily minimal) period $p$.  Let  
$$ 
\per_{\leq p}(Y):=\bigcup_{q=1}^p\per_q(Y) 
$$ 
denote the set of all periodic points in $Y$ of minimal period at most $p$.
\end{notation}

We check that if two sofic shifts have the same language up to some large scale, then they have the same periodic points of small period.

\begin{lemma}\label{lem:language_periodic}
Fix $p\in\N$.  Suppose that $Y_1,Y_2\subseteq\mathcal{Y}_{a,f,R}$ for some $a, f, R\in\N$ and that there exists $N>p\cdot a^{\max\{f,2R+1\}}$ such that $\mathcal{L}_N(Y_1)=\mathcal{L}_N(Y_2)$.  Then $\per_{\leq p}(Y_1)=\per_{\leq p}(Y_2)$. 
\end{lemma}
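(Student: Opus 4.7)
The plan is to use the hypothesis $\mathcal{L}_N(Y_1)=\mathcal{L}_N(Y_2)$ to lift a periodic point of $Y_1$ to a preimage in the SFT cover of $Y_2$, and then to ``loop'' that preimage into a periodic point whose image is $y$.  By symmetry between $Y_1$ and $Y_2$, it will suffice to show $\per_{\leq p}(Y_1)\subseteq\per_{\leq p}(Y_2)$.  I would fix $y\in\per_q(Y_1)$ with $q\leq p$, and pick $X_2\in\mathcal{X}_{a,f}$ together with a range-$R$ factor map $\varphi_2\colon X_2\to Y_2$ witnessing $Y_2\in\mathcal{Y}_{a,f,R}$.  Since $y_0\cdots y_{N-1}\in\mathcal{L}_N(Y_1)=\mathcal{L}_N(Y_2)$, choose some $x\in X_2$ with $\varphi_2(x)_k=y_k$ for every $k\in[0,N-1]$.

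Set $m=\max\{f,2R+1\}$.  The hypothesis $N>p\cdot a^{m}$ together with $q\leq p$ will ensure that among the positions $k\in[0,N-m]$ lying in some common residue class modulo $q$ there are more than $a^{m}$, so pigeonhole applied to the length-$m$ windows $x_k\cdots x_{k+m-1}$ yields indices $i<j$ in that class with
\[
x_i\cdots x_{i+m-1}=x_j\cdots x_{j+m-1}.
\]
Writing $d:=j-i$ (a positive multiple of $q$), define $\hat x\in\{0,\dots,a-1\}^{\Z}$ to be the $d$-periodic extension of $x|_{[i,j-1]}$, i.e.\ $\hat x_k:=x_{i+((k-i)\bmod d)}$.

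The technical heart is the simultaneous verification that $\hat x\in X_2$ and that $\varphi_2(\hat x)=y$.  First, any word of length at most $f$ appearing in $\hat x$ that is not already a subword of $x$ must straddle the junction between two consecutive copies of the period; because $m\geq f$ and the $m$-windows at $i$ and $j$ agree, such a straddling window can be re-expressed as a genuine length-$(\leq f)$ subword of $x$, and hence it avoids the forbidden words defining $X_2$.  Second, the same matching invoked with $m\geq 2R+1$ implies that every length-$(2R+1)$ window of $\hat x$, whether inside one period or straddling a junction, equals a length-$(2R+1)$ window of $x$; consequently $\varphi_2(\hat x)_k=\varphi_2(x)_{i+((k-i)\bmod d)}$.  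Since $i+((k-i)\bmod d)\in[i,j-1]\subseteq[0,N-1]$, this equals $y_{i+((k-i)\bmod d)}$, which in turn equals $y_k$ because $q\mid d$ and $y$ is $q$-periodic.  Hence $y=\varphi_2(\hat x)\in Y_2$, and as $y$ has minimal period at most $p$, we conclude $y\in\per_{\leq p}(Y_2)$.

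I expect the hardest step to be this double boundary check: the exponent $\max\{f,2R+1\}$ in the hypothesis is forced precisely because one needs a single pigeonhole-produced matching window that is long enough both to handle forbidden words of length up to $f$ (controlling membership of $\hat x$ in $X_2$) and to handle block-code windows of radius $R$ (controlling the value of $\varphi_2$ at every position).  The multiplicative factor $p$ is absorbed by the residue-class pigeonhole over the $q\leq p$ possible classes modulo $q$, which is what lets us guarantee that the cut-and-loop period $d=j-i$ is itself a multiple of $q$.
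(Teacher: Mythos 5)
Your argument is correct and follows essentially the same route as the paper's: lift the word $y_0\cdots y_{N-1}$ through a range-$R$ cover $X_2\in\mathcal{X}_{a,f}$, pigeonhole on long windows at positions congruent mod $q$ (the paper simply restricts to the multiples of $q$ and uses windows centered there of radius $\tilde R=\max\{\lceil(f-1)/2\rceil,R\}$), and loop the matched segment into a $d$-periodic point of $X_2$ with $q\mid d$ whose image is $y$. The only spot needing a touch more care than you state is the left edge of your fundamental domain $[i,j-1]$: for $k'$ within $R$ of $i$ the window $[k'-R,k'+R]$ dips below $i$, where $\hat x$ agrees with a \emph{translate} of $x$ rather than with $x$ itself, so the clean fix is either to cut the period at the left end of the matched window (as the paper does) or to note that $\varphi_2(\hat x)$ is $d$-periodic and agrees with the $q$-periodic $y$ on a full period, which still forces $\varphi_2(\hat x)=y$.
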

\begin{proof}
For $i=1,2$, choose shifts of finite type $X_i\in\mathcal{X}_{a,f}$ and factor maps $\varphi_i\colon X_i\to Y_i$.  Let $\Phi_i$ be a range $R$ block code that implements $\varphi_i$, meaning that 
$$
\varphi_i(x)_t=\Phi_i(x_{t-R},\dots,x_t,\dots,x_{t+R}) 
$$
for all $t\in\Z$.  Let $\tilde{R}=\max\{\lceil(f-1)/2\rceil,R\}$ and let $\tilde{\Phi}_i$ be a range $\tilde{R}$ block code that also implements $\varphi_i$ (only making use of $(x_{t-R},\dots,x_{t+R})$ in the case that $(f-1)/2>R$).

We  prove $\per_{\leq p}(Y_1)\subseteq\per_{\leq p}(Y_2)$, the other case being analogous.  Let $y\in\per_{\leq p}(Y_1)$ and find $q\leq p$ such that $\sigma^qy=y$.  
Since $N>q\cdot a^{2\tilde{R}+1}$ and $\mathcal{L}_N(Y_2)=\mathcal{L}_N(Y_1)$, we can find $x\in X_2$ such that $\varphi_2(x)$ satisfies 
\begin{equation}\label{eq:word}
y_t=\varphi_2(x)_t \quad \text{ for all }0\leq t\leq q\cdot a^{2\tilde{R}+1}.  
\end{equation}
For each $0\leq k\leq a^{2\tilde{R}+1}$, set 
$$ 
w_k:=(x_{qk-\tilde{R}},\dots,x_{qk},\dots,x_{qk+\tilde{R}}).
$$ 
It follows from Equation~\eqref{eq:word} that $\tilde{\Phi}_2(w_k)=y_{qk}$ for all $0\leq k\leq a^{2\tilde{R}+1}$.   
By  definition of $\tilde{R}$, we have that $|w_k|\geq f$.  Since $|\mathcal{L}_{2\tilde{R}+1}(X_2)|\leq a^{2\tilde{R}+1}$, there exist $0\leq k_1<k_2\leq a^{2\tilde{R}+1}$ such that $w_{k_1}=w_{k_2}$.  Since $X_2$ can be defined using a set of forbidden words that all have length at most $f$, there is a periodic point $z\in X_2$ of period $q(k_2-k_1)$ such that 
$$ 
z_t=x_t \quad \text{ for all }qk_1-\tilde{R}\leq t\leq qk_2-\tilde{R}. 
$$ 
Note that this automatically also holds for $qk_2-\tilde{R}<t\leq qk_2+\tilde{R}$ in $z$ by periodicity, and in $x$ since $w_{k_1}=w_{k_2}$.  Therefore, it follows that 
\begin{equation}\label{eq:per}
\varphi_2(z)_t=\varphi_2(x)_t \quad \text{ for all }qk_1\leq t\leq qk_2. 
\end{equation}
Thus $\varphi_2(z)$ is periodic, with (not necessarily minimal) period $q(k_2-k_1)$, and because $\varphi_2(x)$ is a periodic word with period $q$, Equation~\eqref{eq:per} implies that $\varphi_2(z)$ is also periodic with period $q$.  
It follows that $y=\varphi_2(z)\in\per_q(Y_2)\subseteq\per_{\leq p}(Y_2)$.  Since $y\in\per_{\leq p}(Y_1)$ is arbitrary, it follows that $\per_{\leq p}(Y_1)\subseteq\per_{\leq p}(Y_2)$. 
\end{proof}

For a sofic shift $Y$, we are interested in constructing a measure supported on the periodic orbits of small period.  Of course $Y$ might not have any periodic points of very low period, and we use a lemma to provide an upper bound on the smallest period of any periodic point in $Y$.

\begin{lemma}\label{lem:nonempty}
Let $Y\in\mathcal{Y}_{a,f,R}$ for some $a, f, R\in\N$.  There exists $p\leq1+a^f$ such that $\per_p(Y)\neq\emptyset$.
\end{lemma}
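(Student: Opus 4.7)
The plan is to construct a short-period periodic point inside $X$ and transport it to $Y$ via the factor map. Since $Y\in\mathcal{Y}_{a,f,R}$ (and is nonempty, as otherwise the statement is vacuous), fix $X\in\mathcal{X}_{a,f}$ together with a factor map $\varphi\colon X\to Y$ implementing $Y$ as a range-$R$ block-code image of $X$. If I produce $z\in X$ with $\sigma^q z=z$ for some $q\le 1+a^f$, then
\[
\sigma^q\varphi(z)=\varphi(\sigma^q z)=\varphi(z),
\]
so $\varphi(z)\in\per_q(Y)$ and the conclusion follows with $p=q$. In particular, it suffices to work entirely inside $X$.

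To build $z$, I would apply the pigeonhole principle to the length-$(f-1)$ windows of an arbitrary $x\in X$. There are at most $a^{f-1}$ distinct words of length $f-1$ over $\{0,\dots,a-1\}$, so among the $a^{f-1}+1$ windows $x_{[k,k+f-2]}$ for $0\le k\le a^{f-1}$ two must coincide. Fix indices $0\le i<j\le a^{f-1}$ with $x_{[i,i+f-2]}=x_{[j,j+f-2]}$, set $q:=j-i$, and define $z$ to be the bi-infinite sequence of period $q$ that agrees with $x$ on the interval $[i,j-1]$.

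The heart of the argument is checking $z\in X$. Since $X\in\mathcal{X}_{a,f}$, membership is detected by length-$f$ windows: no such window may contain any forbidden word. The matching of length-$(f-1)$ blocks at positions $i$ and $j$ upgrades (by periodicity) to an agreement $z_t=x_t$ on the longer interval $[i,j+f-2]$, because for $j\le t\le j+f-2$ one has $z_t=z_{t-q}=x_{t-q}=x_t$ by the matching. Every length-$f$ window of $z$ is, after a shift by a multiple of $q$, a length-$f$ window lying inside $[i,j+f-2]$, hence coincides with a length-$f$ window of $x\in X$, and so contains no forbidden subword. Therefore $z\in X$ with period $q\le a^{f-1}\le 1+a^f$, and $\varphi(z)$ gives the required periodic point in $Y$.

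The only genuine obstacle is the verification that the ``splicing'' at the matching windows does not introduce a new length-$f$ substring absent from $x$; this is exactly why one must match windows of length at least $f-1$ and why the hypothesis that forbidden words have length at most $f$ is essential. The bound $1+a^f$ in the statement is cruder than the $a^{f-1}$ I obtain, but either suffices for the intended applications later in the section.
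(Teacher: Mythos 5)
Your proof is correct and follows essentially the same route as the paper's: pigeonhole on the windows of an orbit segment of $X$, splice at the repeated window to produce a periodic point of $X$, and push it forward through the block code $\varphi$. The only difference is that you match windows of length $f-1$ rather than $f$, which gives the marginally sharper bound $a^{f-1}$ in place of $1+a^{f}$; your verification that the spliced point lies in $X$ is the same standard argument for SFTs defined by forbidden words of length at most $f$.
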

\begin{proof}
Find $X\in\mathcal{X}_{a,f}$ and a factor map $\varphi\colon X\to Y$.  Since $|\mathcal{L}_f(X)|\leq a^f$, for any allowed word $x_1\cdots x_{a^f+f} \in \mathcal{L}_{a^f +f}(X)$ there exist $0\leq i<j\leq a^f$ such that 
$$
x_i\cdots x_{i+f-1}=x_j\cdots x_{j+f-1}. 
$$ 
Since $X$ can be defined using minimal forbidden words that all have length at most $f$, there is a periodic point $y  \in X$ of period $j-i$ such that $x_t=y_t$ for all $i\leq t\leq j+f$.  
It follows that $\varphi(y)\in Y$ is a periodic point of period dividing $j-i$.  In particular, $Y$ has a periodic point of period at most $1+a^f$.
\end{proof}

\begin{theorem}\label{th:char-periodic}
Let $X$ be a shift and $(X_n)_{n\in\N}$ be its SFT cover.  Let $Y$ be a factor of $X$ and $\varphi\colon X\to Y$ a factor map.  If there exist infinitely many $n\in \N$ such that $X_n=X_{n+\tau(n)}$ for the function 
$\tau(n)=2n+(1+n^n)\cdot n^{4n+1}$, 
then $Y$ has a characteristic measure.
\end{theorem}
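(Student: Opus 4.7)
The plan is to use the hypothesis to produce, for each sufficiently large good $n$, a non-empty finite $\sigma$-invariant set $K_n$ of periodic points in $Y_n := \varphi(X_n)$ that is preserved by every sufficiently short-range automorphism of $Y$, and then to extract a characteristic measure as a weak-$*$ limit of the uniform probability measures on these $K_n$.

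I would first extract a language-stability consequence: if $X_n = X_{n+\tau(n)}$ then no minimal forbidden word of $X$ has length in $(n, n+\tau(n)]$, so any word of length at most $n+\tau(n)$ avoiding the minimal forbidden words of length $\leq n$ in fact avoids every minimal forbidden word of length $\leq n+\tau(n)$, giving $\mathcal{L}_N(X_n) = \mathcal{L}_N(X)$ for $N \leq n+\tau(n)$. Pushing through the range-$R$ factor map $\varphi$ yields $\mathcal{L}_N(Y_n) = \mathcal{L}_N(Y)$ for $N \leq n+\tau(n)-2R$. Let $a$ bound the alphabet sizes of $X$ and $Y$, and restrict attention to $n$ large enough that $a \leq n$ and $R \leq n$.

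Next I would build $K_n$ and stabilize it under $\Aut(Y)$. By Lemma~\ref{lem:nonempty} applied to $Y_n \in \mathcal{Y}_{a,n,R}$, the finite set $K_n := \per_{\leq p_n}(Y_n)$ with $p_n := 1+a^n \leq 1+n^n$ is non-empty. Given $\phi \in \Aut(Y)$ of range $S \leq n$, extend $\phi$ to a block code $\tilde\phi$ on $\mathcal{A}_Y^{\Z}$ and observe $\tilde\phi(Y_n) = (\phi\circ\varphi)(X_n) \in \mathcal{Y}_{a,n,R+S} \subseteq \mathcal{Y}_{n,n,2n}$. The language-stability argument applied to $\phi\circ\varphi$ gives $\mathcal{L}_N(Y_n) = \mathcal{L}_N(\tilde\phi(Y_n)) = \mathcal{L}_N(Y)$ for $N \leq n+\tau(n)-2(R+S)$, and the choice $\tau(n) = 2n + (1+n^n)n^{4n+1}$ is tailored so that this attainable $N$ exceeds the threshold $p_n \cdot n^{\max\{n,4n+1\}} = (1+n^n)n^{4n+1}$ required by Lemma~\ref{lem:language_periodic}. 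That lemma gives $\per_{\leq p_n}(Y_n) = \per_{\leq p_n}(\tilde\phi(Y_n))$, so $\tilde\phi(K_n) \subseteq K_n$ since $\tilde\phi$ commutes with $\sigma$ and preserves periods. To upgrade to bijectivity, note that $\widetilde{\phi^{-1}} \circ \tilde\phi$ is a block code of range $2S$ equal to the identity on $Y$; its local rule sends every window $w \in \mathcal{L}_{4S+1}(Y)$ to the central letter of $w$, and since $4S+1 \leq n+\tau(n)-2R$ for large $n$ the equality $\mathcal{L}_{4S+1}(Y_n) = \mathcal{L}_{4S+1}(Y)$ propagates this, giving $\widetilde{\phi^{-1}} \circ \tilde\phi = \mathrm{id}$ on $Y_n$; symmetrically for $\tilde\phi \circ \widetilde{\phi^{-1}}$. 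Hence $\tilde\phi$ is a bijection of $Y_n$, and in particular of $K_n$.

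Finally, the uniform probability measure $\nu_n$ on $K_n$ is $\sigma$-invariant and $\phi$-invariant for every $\phi \in \Aut(Y)$ of range at most $n$. A weak-$*$ subsequential limit $\nu$ of the $\nu_n$ along the subsequence of good $n$'s is then supported on $\bigcap_n Y_n = Y$ and $\sigma$-invariant; since each fixed $\phi \in \Aut(Y)$ has range $\leq n$ for all $n$ large, $\nu$ is $\phi$-invariant for every such $\phi$, so $\nu$ is characteristic. The main obstacle is the bijectivity of $\tilde\phi$ on $K_n$—a priori $\tilde\phi$ and $\widetilde{\phi^{-1}}$ are mutual inverses only on $Y \subsetneq Y_n$—and it is precisely here that the exponent $n^{4n+1}$ in $\tau(n)$ is used, buying enough language stability at the doubled range $2(R+S) \leq 4n$ to propagate the identity from $Y$ up to $Y_n$.
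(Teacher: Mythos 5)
Your proposal is correct and follows essentially the same route as the paper: push the SFT cover through $\varphi$ to get $Y_n=\varphi(X_n)$, use Lemma~\ref{lem:nonempty} to guarantee periodic points of period at most $1+a^n$, use the language agreement up to scale $n+\tau(n)-O(R)$ together with Lemma~\ref{lem:language_periodic} to show $\per_{\leq 1+a^n}(Y_n)=\per_{\leq 1+a^n}(\beta(Y_n))$, establish that $\beta$ acts as a conjugacy between $Y_n$ and $\beta(Y_n)$ by propagating the identity block code $\beta^{-1}\circ\beta$ from $Y$ to $Y_n$, and pass to a weak\textsuperscript{*} limit of the uniform measures on these periodic-point sets. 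The bookkeeping with the exponent $4n+1$ and the restriction to automorphisms of range at most $n$ matches the paper's computation, so there is nothing to add.
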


We have not made any effort to optimize the growth rate $\tau(n)$, as this does not lead to stronger results in our setting.  
We note that this result  gives a direct proof that any sofic shift admits a characteristic measure.

\begin{proof}
Let $R_{\varphi}$ be a range for $\varphi$ and let $\Phi$ be a range $R_{\varphi}$ block code that implements it.  
The domain of $\Phi$ naturally extends to any shift $Z$ satisfying $\mathcal{L}_{2R_{\varphi}+1}(Z)=\mathcal{L}_{2R_{\varphi}+1}(X)$.  In particular, it extends to $X_n$ for all $n\geq2R_{\varphi}+1$.  Making a small abuse of notation, we use $\varphi$ to denote the map determined by $\Phi$ on each such $X_n$.  For $n\geq2R_{\varphi}+1$,  define $Y_n:=\varphi(X_n)$.  
It follows immediately from the definitions that $\mathcal{L}_k(Y_n)=\mathcal{L}_k(Y)$ for all $k\leq n-2R_{\varphi}$.

Let $\beta\in\Aut(Y)$ and let $R_{\beta}$ be a symmetric range for $\beta$.  Again, the domain of $\beta$ naturally extends to any shift $Z$ satisfying $\mathcal{L}_{2R_{\beta}+1}(Z)=\mathcal{L}_{2R_{\beta}+1}(Y)$.  In particular, the domain extends to $Y_n$ for all $n\geq2R_{\varphi}+2R_{\beta}+1$ (recall that we assume that the range of $\beta$ is symmetric).  For such $n$, 
since $\beta(Y)=Y$ it follows that $\mathcal{L}_k(\beta(Y_n))=\mathcal{L}_k(Y)$ for all $k\leq n-2R_{\beta}$.  Thus, the domain of the block code implementing $\beta^{-1}$ extends naturally to $\beta(Y_n)$ for all $n\geq4R_{\beta}+1$, provided $n$ is also at least $2R_{\varphi}+2R_{\beta}+1$ so that $Y_n$ is defined.  
Composing the block codes implementing $\beta$ and $\beta^{-1}$, the resulting block code of range
$2R_{\beta}$ implements the identity map on any shift $Z$ satisfying $\mathcal{L}_{4R_{\beta}+1}(Z)=\mathcal{L}_{4R_{\beta}+1}(Y)$.  In particular, these block codes implement the identity map on $Y_n$ for all $n\geq2R_{\beta}+\max\{2R_{\varphi},2R_{\beta}\}+1$.  It follows that, for such $n$, the map $\beta$ is a topological conjugacy between $Y_n$ and $\beta(Y_n)$.  Moreover, for all such $n$, 
\begin{equation}\label{eq:isom_lang}
\mathcal{L}_k(Y)=\mathcal{L}_k(Y_n)=\mathcal{L}_k(\beta(Y_n)) \quad \text{ for all }k\leq n-2R_{\beta}. 
\end{equation} 
Finally we observe that $Y_n$ is a factor of $X_n$ implemented by some factor map of range $R_{\varphi}$ and $\beta(Y_n)$ is a factor of $X_n$ implemented by some factor map of range $R_{\varphi}+R_{\beta}$.  Setting $a=\max\{|\mathcal{L}_1(X)|,|\mathcal{L}_1(Y)|\}$ we have, up to renaming the letters of the alphabets in $X$ and $Y$, that both $Y_n$ and $\beta(Y_n)$ are in $\mathcal{Y}_{a,n,R_{\varphi}+R_{\beta}}$.

Suppose $T,n\in\N$ are such that $X_n=X_{n+T}$ (and thus we also have that $Y_n=Y_{n+T}$).  Then applying Equation~\eqref{eq:isom_lang} to $Y_{n+T}$, it follows that  
\begin{equation}\label{eq:isom_lang2}
\mathcal{L}_k(Y)=\mathcal{L}_k(Y_n)=\mathcal{L}_k(\beta(Y_n)) \quad \text{ for all }k\leq n+T-2R_{\beta}. 
\end{equation} 
Note that we have written $Y_n$ instead of $Y_{n+T}$ and $\beta(Y_n)$ instead of $\beta(Y_{n+T})$.  Applying  Lemma~\ref{lem:language_periodic}, we have that $\per_{\leq p}(Y_n)=\per_{\leq p}(\beta(Y_n))$ for any $p$ such that $n+T-2R_{\beta}\geq p\cdot a^{\max\{n,2R_{\varphi}+2R_{\beta}+1\}}$.  In other words, this holds so long as 
$$ 
T\geq2R_{\beta}+p\cdot a^{\max\{n,2R_{\varphi}+2R_{\beta}+1\}}-n. 
$$ 
Moreover,  by Lemma~\ref{lem:nonempty}, we have that $\per_{\leq p}(Y_n)\neq\emptyset$ so long as $p\geq1+a^n$.  

Summarizing, if $n,T\in\N$ are such that $X_n=X_{n+T}$ and 
$$ 
T\geq2R_{\beta}+(1+a^n)\cdot a^{\max\{n,2R_{\varphi}+2R_{\beta}+1\}}-n 
$$ 
then there exists $p\leq1+a^n$ such that $\per_{\leq p}(Y_n)=\per_{\leq p}(\beta(Y_n))$ and these sets are nonempty.  
It follows that in this case we have 
\begin{equation}\label{eq:support}
\beta\left(\per_{\leq(1+a^n)}(Y_n)\right)=\per_{\leq(1+a^n)}(\beta(Y_n))
\end{equation} 
and so $\beta$ preserves the measure 
$$ 
\nu_{a,n,R_{\beta},R_{\varphi}}=\frac{1}{|\per_{\leq(1+a^n)}(Y_n)|}\sum_{z\in\per_{\leq(1+a^n)}(Y_n)}\delta_z. 
$$ 
In fact, this measure is preserved by any automorphism in $\Aut(Y)$ of range at most $R_{\beta}$.  But by the assumption on the SFT cover of $X$, there are infinitely many $n\in\N$ for which $Y_n=Y_{n+\tau(n)}$ and $\tau(n)\geq2R_{\beta}+(1+a^n)\cdot a^{\max\{n,2R_{\varphi}+2R_{\beta}+1\}}-n$ for all but finitely many such $n$.  Therefore, for all but finitely many such $n$, the measure $\nu_{n,n,n,n}$ is preserved by any automorphism of $Y$ whose range is at most $R_{\beta}$.  Since $\beta\in\Aut(Y)$, and hence $R_{\beta}$, is arbitrary, the measure $\nu_{n,n,n,n}$ is preserved by any automorphism of $Y$.  Any weak* limit of the sequence $\{\nu_{n,n,n,n}\}_{n=1}^{\infty}$ must be $\Aut(Y)$-invariant and since $Y=\bigcap_{n=1}^{\infty}Y_n$, the limiting measure is supported on $Y$.  Thus the factor $Y$ has a characteristic measure.
\end{proof}

It does not follow from the proof of Theorem~\ref{th:char-periodic} that the resulting characteristic measure is necessarily a measure of maximal entropy.  
Moreover, even though the set of measures of maximal entropy is  compact, convex and invariant under all automorphisms, as the automorphism group of a shift need not be amenable, it is not clear {\em a priori} that there exists a measure of maximal entropy invariant under any automorphism.
However, we show that with further assumptions on the growth of $\tau(n)$, we can guarantee this condition as well.  To help us do this, we need to better understand the measures of maximal entropy on a (not necessarily transitive) sofic shift $Y$ and this is carried out in the next section.

\subsection{Obtaining a characteristic measure that is of maximal entropy}

A shift $(X, \sigma)$ is {\em forward transitive} if for some $x\in X$, the forward orbit $\{\sigma^nx: n\in\N\}$ is dense in $X$.  
\begin{lemma}\label{lem:transitive_max}
Let $Y\in\mathcal{Y}_{a,f,R}$ for some $a,f,R\in\N$, $f> 2R+1$.  There exists a forward transitive sofic shift $Z\subseteq Y$ such that $Z\in\mathcal{Y}_{a,f,R}$ and $h_{\ttp}(Z)=h_{\ttp}(Y)$.  
Moreover, if $X\in\mathcal{X}_{a,f}$ and $\varphi\colon X\to Y$ is a range $R$ block code, then there exists $X^{\prime}\subseteq X$ such that $X^{\prime}\in\mathcal{X}_{a,f}$, $X^{\prime}$ is forward transitive and $Z=\varphi(X^{\prime})$.  In addition, if $\mu_Y$ is any ergodic measure of maximal entropy on $Y$, then $Z$ can be chosen such that $Z$ contains the support of $\mu_Y$.
\end{lemma}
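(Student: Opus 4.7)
The plan is to recode $X$ as an edge shift on a finite directed graph, decompose it into strongly connected components, and use the fact that any ergodic invariant measure on an SFT is supported on a single such component. First, pass to the $(f-1)$-block presentation, so that $X$ is realized as the edge shift of a directed graph $G$ whose vertices are the allowed $(f-1)$-blocks of $X$ and whose edges correspond to the allowed $f$-blocks. Let $G_1,\ldots,G_m$ denote the strongly connected components of $G$, and for each $i$ let $X_i\subseteq X$ be the subshift consisting of bi-infinite walks that stay in $G_i$. The shift $X_i$ is defined by adjoining to the length-$\leq f$ forbidden words of $X$ the length-$(f-1)$ words that are vertices of $G$ but not of $G_i$, so $X_i\in\mathcal{X}_{a,f}$. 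Strong connectedness of $G_i$ gives irreducibility of $X_i$, and concatenating connecting words that exhaust a countable basis of cylinders in $X_i$ produces a point with dense forward orbit, so each $X_i$ is forward transitive. Since the restriction of $\varphi$ to $X_i$ is still implemented by a block code of range $R$, the continuous image $\varphi(X_i)\subseteq Y$ is a forward transitive sofic subshift in $\mathcal{Y}_{a,f,R}$.

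Next, fix an ergodic measure of maximal entropy $\mu_Y$ on $Y$, which exists since $Y$ is expansive. I claim that $\mu_Y$ admits an ergodic invariant lift $\mu_X$ to $X$. The set of invariant probability measures on $X$ that push forward to $\mu_Y$ under $\varphi_{*}$ is non-empty (take a weak$^*$ limit of the Ces\`aro averages of any Borel lift, using that $\mu_Y$ is already invariant), compact, and convex. Any extreme point of this set is necessarily ergodic on $X$, for a non-trivial convex decomposition of an extreme lift would push forward to a non-trivial decomposition of $\mu_Y$, contradicting the ergodicity of $\mu_Y$. Take such an extreme, hence ergodic, lift $\mu_X$. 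By the standard structure theorem for SFTs, $\mu_X$ concentrates on a single $X_i$: Poincar\'e recurrence gives that $\mu_X$-almost every $x$ is recurrent in both directions, a recurrent bi-infinite walk in $G$ visits only vertices of a single strongly connected component, and ergodicity of $\mu_X$ forces $\mu_X(X_i)=1$ for exactly one $i$.

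Setting $X^{\prime}=X_i$ and $Z=\varphi(X^{\prime})$ finishes the construction: $Z\in\mathcal{Y}_{a,f,R}$ is a forward transitive sofic shift containing $\mathrm{supp}(\mu_Y)=\mathrm{supp}(\varphi_{*}\mu_X)$, and the variational principle applied to $Z$ gives $h_{\ttp}(Z)\geq h_{\mu_Y}(\sigma)=h_{\ttp}(Y)$, with the reverse inequality immediate from $Z\subseteq Y$. If the ``in addition'' clause about $\mu_Y$ is not needed, one can instead pick the component $X_i$ that maximizes $h_{\ttp}(\varphi(X_i))$ and argue in the same way. The main delicate step is the construction of the ergodic lift $\mu_X$ together with the verification that it concentrates on a single irreducible component of $X$; all remaining claims follow directly from the block recoding and the definitions of the classes $\mathcal{X}_{a,f}$ and $\mathcal{Y}_{a,f,R}$.
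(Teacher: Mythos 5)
Your proof is correct, but it takes a genuinely different route from the paper. You recode $X$ as an edge shift, decompose the underlying graph into strongly connected components to get the candidate forward transitive SFTs $X_i\in\mathcal{X}_{a,f}$, and then locate the right component by lifting $\mu_Y$ to an ergodic invariant measure $\mu_X$ on $X$ (via an extreme point of the compact convex fiber $\{\mu:\varphi_*\mu=\mu_Y\}$) and invoking Poincar\'e recurrence to see that $\mu_X$ lives on a single component. The paper instead lifts a \emph{point} rather than a measure: it takes a $\mu_Y$-generic point $y$, a preimage $x\in\varphi^{-1}(y)$, and defines $X'$ as the SFT whose allowed $f$-blocks are exactly those occurring infinitely often in the forward orbit of $x$; forward transitivity of $X'$ is then read off from the fact that $x$ itself eventually witnesses all transitions, and $h_{\ttp}(Z)=h_{\ttp}(Y)$ follows because $\omega(y)\subseteq Z$ carries $\mu_Y$. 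Your approach leans on standard but heavier machinery (measurable lifts, ergodic decomposition of the fiber, the SCC structure theorem), in exchange for which the components $X_i$ are canonical and the argument cleanly separates "which transitive pieces exist" from "which one supports the measure"; the paper's argument is more elementary and self-contained, needing only the pointwise ergodic theorem and finiteness of $\mathcal{L}_f(X)$. Two small points to tidy in your write-up: a strongly connected component consisting of a single vertex with no self-loop yields an empty $X_i$, which should be discarded (harmless, since the component carrying $\mu_X$ is nonempty); and when you say a recurrent bi-infinite walk stays in one component, you should note that the set of recurrent points is shift-invariant, so that every vertex visited by $x$ (not just the one at the origin) recurs, which is what forces all visited vertices into a single component.
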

\begin{proof}
Let $X\in\mathcal{X}_{a,f}$ and let $\varphi\colon X\to Y$ be a factor map that can be implemented by a range $R$ block code.  Since $Y$ is sofic, it has an ergodic measure of maximal entropy $\mu_Y$. 
By the pointwise ergodic theorem, for $\mu_Y$-almost every $y\in Y$ we have 
$$ 
\lim_{n\to\infty}\frac{1}{n}\sum_{k=0}^{n-1}\psi(\sigma^ky)=\int_Y\psi \,d\mu_Y 
$$ 
for all $\psi\in L^1(\mu_Y)$.   Fix one such $y\in Y$ and observe that $\mu_Y$ is supported on 
the $\omega$-limit set $\omega(y)\subseteq Y$.  In particular, $h_{\ttp}(\omega(y))\geq h_{\mu_Y}(\sigma)=h_{\ttp}(Y)$ and thus $h_{\ttp}(\omega(y))=h_{\ttp}(Y)$.

Choose $x\in X$ such that $y=\varphi(x)$.  Define 
$$ 
\mathcal{W}_x:=\{w\in\mathcal{L}_f(X)\colon\sigma^k(x)\in[w]\text{ for infinitely many }k\in\N\} 
$$ 
and $\mathcal{F}_x:=\{0,1,\dots,a-1\}^f\setminus\mathcal{W}_x$.  Let $X^{\prime}$ be the shift of finite type 
defined by forbidding the words in $\mathcal{F}_x$.
So we have that $X^{\prime}\in\mathcal{X}_{a,f}$ and $X' \subset X$.   Since they are only finitely many words of length $f$, ultimately, the sequence $x$ is a concatenation of words in $\mathcal{W}_x$. More precisely, there is some 
 $k_0\in\N$ such that $x_kx_{k+1}\cdots x_{k+f-1}\in\mathcal{L}_f(X^{\prime})$ for all $k\geq k_0$. Hence any word $w \in \mathcal{W}_x$ occurs in an infinite sequence in $X^{\prime}$ and a word $w\in\mathcal{L}(X)$ with $|w|\geq f$ is in the language 
$\mathcal{L}(X^{\prime})$ if and only if all of its subwords of length $f$ are in the language $\mathcal{L}_f(X^{\prime})$.

Since $X^{\prime}$ is defined by forbidden words of length $f$ and for any $u,v\in\mathcal{L}_f(X^{\prime})$ there exists $w\in\mathcal{L}(X^{\prime})$ such that $\sigma^k(x)\in[uwv]$ for some $k\geq k_0$, it follows that $X^{\prime}$ is forward transitive.  Setting  $Z:=\varphi(X^{\prime})\in\mathcal{Y}_{a,f,R}$, we have that $Z$ is  a forward transitive sofic shift in $Y$.  
Furthermore, since $X^{\prime}$ contains the $\omega$-limit set of $x$, it follows that $\omega(y)\subseteq Z$.  
Thus we have that $h_{\ttp}(Y)=h_{\ttp}(\omega(y))\leq h_{\ttp}(Z)$, which implies that $h_{\ttp}(Z)=h_{\ttp}(Y)$.
\end{proof}

Recall that any forward transitive sofic shift has a unique measure of maximal entropy~\cite{Weiss} 
and is {\em entropy minimal}~\cite{Coven-Smital}, meaning that all proper subshifts have strictly lower entropy.

\begin{lemma}\label{lem:finite_union}
Let $Y\in\mathcal{Y}_{a,f,R}$ for some $a,f,R\in\N$, and $f > 2R +1$.  There exist $k\in\N$ and sofic subshifts $Y_1,\dots,Y_k\subseteq Y$ such that 
	\begin{enumerate}
	\item For $i=1, \ldots, k$, each shift $Y_i\in\mathcal{Y}_{a,f,R}$; 
	\item For $i=1, \ldots, k$, each shift $Y_i$ is forward transitive; 
	\item For $i=1, \ldots, k$, we have $h_{\ttp}(Y_i)=h_{\ttp}(Y)$; 
	\item Every ergodic measure of maximal entropy supported on $Y$ is supported on $\bigcup_{i=1}^kY_i$; 
	\item For any $\alpha\in\Aut(Y)$, we have $\alpha(\bigcup_{i=1}^kY_i)=\bigcup_{i=1}^kY_i$.
	\end{enumerate} 
\end{lemma}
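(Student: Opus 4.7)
The plan is to construct the $Y_i$'s by applying Lemma~\ref{lem:transitive_max} to each ergodic measure of maximal entropy on $Y$, and then argue that only finitely many distinct subshifts arise and that their union is $\Aut(Y)$-invariant. For each ergodic MME $\mu$ on $Y$, the ``moreover'' clause of Lemma~\ref{lem:transitive_max} yields a forward transitive sofic shift $Z_\mu \in \mathcal{Y}_{a,f,R}$ with $Z_\mu \subseteq Y$, $h_{\ttp}(Z_\mu) = h_{\ttp}(Y)$, and $\mathrm{supp}(\mu) \subseteq Z_\mu$. Observe that $\mathcal{Y}_{a,f,R}$ is a finite set: $\mathcal{X}_{a,f}$ has at most $2^{a^f}$ elements and there are at most $a^{a^{2R+1}}$ range-$R$ block codes on an $a$-letter alphabet. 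Hence the subfamily $\{Z_\mu : \mu \text{ an ergodic MME of } Y\}$ has only finitely many distinct elements; list them as $Y_1, \dots, Y_k$. Properties (1)--(3) are immediate, and (4) holds because every ergodic MME $\mu$ on $Y$ satisfies $\mathrm{supp}(\mu) \subseteq Z_\mu \subseteq \bigcup_i Y_i$.

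The crux of the argument is (5), for which the key preliminary step is to show that each $Y_i$ is exactly the support of an ergodic MME of $Y$. Fix $\mu$ with $Y_i = Z_\mu$. Since $Z_\mu$ is forward transitive sofic, the theorem of Weiss recalled after Lemma~\ref{lem:transitive_max} gives a unique measure of maximal entropy $\nu$ on $Z_\mu$; because $h_\mu(\sigma) = h_{\ttp}(Y) = h_{\ttp}(Z_\mu)$, the measure $\mu$ viewed on $Z_\mu$ is an MME, so $\mu = \nu$. Entropy minimality of forward transitive sofic shifts (Coven--Smital) then forces $\mathrm{supp}(\nu) = Z_\mu$, so $\mathrm{supp}(\mu) = Y_i$. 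In particular, the map $\mu \mapsto Z_\mu$ is a bijection between the ergodic MMEs on $Y$ and $\{Y_1, \dots, Y_k\}$.

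For any $\alpha \in \Aut(Y)$, the pushforward $\alpha_* \mu$ is again an ergodic MME on $Y$ (since $\alpha$ commutes with $\sigma$ and preserves entropy), so it corresponds to some $Y_j$ under our bijection. Taking supports,
\[ \alpha(Y_i) = \alpha(\mathrm{supp}(\mu)) = \mathrm{supp}(\alpha_* \mu) = Y_j,\]
so $\alpha$ permutes the family $\{Y_1, \dots, Y_k\}$ and (5) follows. The main obstacle that this route is designed to bypass is that the naive approach of enumerating all forward transitive sofic subshifts of $Y$ in $\mathcal{Y}_{a,f,R}$ of full entropy would fail to give an $\Aut(Y)$-invariant collection: composing a defining range-$R$ block code with $\alpha$ yields a code of range $R+R_\alpha$, so $\alpha(Z)$ lies in $\mathcal{Y}_{a,f,R+R_\alpha}$ but not obviously in $\mathcal{Y}_{a,f,R}$. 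Singling out the supports of ergodic MMEs canonically selects a finite $\Aut(Y)$-invariant subfamily that, thanks to Lemma~\ref{lem:transitive_max}, automatically lies in $\mathcal{Y}_{a,f,R}$.
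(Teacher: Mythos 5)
Your proof is correct and follows essentially the same route as the paper: apply Lemma~\ref{lem:transitive_max} to each ergodic measure of maximal entropy, use finiteness of the relevant class of sofic shifts to extract $Y_1,\dots,Y_k$, identify each $Y_i$ with the support of an ergodic measure of maximal entropy via entropy minimality, and conclude that automorphisms permute these supports. You merely spell out a couple of steps (the uniqueness of the measure of maximal entropy on each $Z_\mu$ and the counting of $\mathcal{Y}_{a,f,R}$ rather than $\mathcal{X}_{a,f}$) in more detail than the paper does.
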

\begin{proof}
Let $X\in\mathcal{X}_{a,f}$ and $\varphi\colon X\to Y$ a factor map that can be implemented by a range $R$ block code.  By Lemma~\ref{lem:transitive_max}, for any ergodic measure of maximal entropy $\mu$ on $Y$, 
there exists a forward transitive shift $X_{\mu}\subseteq X$ such that $X_{\mu}\in\mathcal{X}_{a,f}$ and $\mu$ is supported on the forward transitive sofic shift $Y_{\mu}:=\varphi(X_{\mu})\in\mathcal{Y}_{a,f,R}$ 
which satisfies $h_{\ttp}(Y_{\mu})=h_{\ttp}(Y)$. 
Since a forward transitive sofic shift is entropy minimal, it follows that $Y_{\mu}$ is actually equal to the support of $\mu$.  Since $\mathcal{X}_{a,f}$ is finite, there are only 
finitely many distinct shifts $X_{\mu}$ and only finitely many distinct shifts $Y_{\mu}$.  
Enumerate the collection of all shifts that arise as $Y_{\mu}$, for some ergodic measure of maximal entropy $\mu$, as $Y_1,\dots,Y_k$.  
If $\alpha\in\Aut(X)$, then $\alpha$ permutes the ergodic measures of maximal entropy supported on $Y$, and therefore permutes the associated supports.  Therefore $\alpha$ permutes $Y_1,\dots,Y_k$ and so preserves $\bigcup_{i=1}^kY_i$.
\end{proof}

\begin{theorem}\label{th:char_sofic}
For any $a, f, R\in\N$ and $Y\in\mathcal{Y}_{a,f,R}$, the subshift $Y$ has a characteristic measure of maximal entropy that is a weak* limit of the sequence of measures $\{\nu_n\}_{n=1}^{\infty}$ where 
$$ 
\nu_n:=\frac{1}{|\per_{\leq n}(Y)|}\sum_{z\in \per_{\leq n}(Y)}\delta_z. 
$$ 
Moreover, if $\mathcal{S}\subseteq\N$ is infinite, there is a characteristic measure of maximal entropy that is  obtained as the weak* limit along a subsequence of elements of $\mathcal{S}$.
\end{theorem}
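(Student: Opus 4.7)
The plan is to combine the decomposition from Lemma~\ref{lem:finite_union} with the classical equidistribution of periodic points in irreducible sofic shifts.

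First, I would observe that each measure $\nu_n$ is automatically $\Aut(Y)$-invariant, since any element of $\Aut(Y)$ permutes the set of periodic points of each minimal period. Consequently, every weak* subsequential limit of $\{\nu_n\}$ is $\Aut(Y)$-invariant, so the remaining task is to produce, along any prescribed infinite set $\mathcal{S}$, a subsequential limit that is a measure of maximal entropy.

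Next, I would apply Lemma~\ref{lem:finite_union} (after replacing $f$ by $\max\{f,2R+2\}$ if needed to meet the hypothesis $f > 2R+1$) to obtain forward transitive sofic subshifts $Y_1, \ldots, Y_k \subseteq Y$, each with $h_{\ttp}(Y_i) = h_{\ttp}(Y)$ and such that every ergodic measure of maximal entropy on $Y$ is supported on some $Y_i$. Each $Y_i$, being a forward transitive sofic shift, admits a unique measure of maximal entropy $\mu_i$. The classical equidistribution of periodic points in irreducible sofic shifts, obtained by lifting through a covering shift of finite type and applying Bowen's theorem, then yields that the normalized counting measure on $\per_{\leq n}(Y_i)$ converges weak* to $\mu_i$, and that $|\per_{\leq n}(Y_i)|$ grows like $e^{n h_{\ttp}(Y)}$.

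Every periodic point of $Y$ lies in some irreducible sofic component of $Y$, and the components of maximal entropy are precisely $Y_1, \ldots, Y_k$; indeed, any other maximal entropy irreducible component would carry an ergodic measure of maximal entropy not supported in $\bigcup_i Y_i$, contradicting Lemma~\ref{lem:finite_union}. The remaining irreducible components have entropy strictly less than $h_{\ttp}(Y)$, so their combined contribution to $\per_{\leq n}(Y)$ is at most $e^{n(h_{\ttp}(Y)-\varepsilon)}$ for some $\varepsilon > 0$. Thus $\nu_n$ decomposes as $\sum_i c_i^{(n)} \nu_n^{(i)} + r_n$, where $\nu_n^{(i)}$ is the uniform measure on $\per_{\leq n}(Y_i)$, the weights satisfy $\sum_i c_i^{(n)} = 1 - o(1)$, and $r_n$ has total variation going to zero. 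Passing to a further subsequence inside $\mathcal{S}$, I would extract limits $c_i^{(n)} \to c_i$ with $\sum_i c_i = 1$; the corresponding weak* limit is $\mu = \sum_i c_i \mu_i$. As a convex combination of measures of maximal entropy, $\mu$ is itself a measure of maximal entropy, and by the first step it is $\Aut(Y)$-invariant, hence a characteristic measure of maximal entropy.

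The main obstacle lies in the two classical ingredients invoked: the exponential upper bound on periodic point counts in lower entropy irreducible components, and the equidistribution of periodic points in each forward transitive sofic $Y_i$ toward $\mu_i$. Both are standard for irreducible shifts of finite type, and they lift to the sofic setting via a covering shift of finite type, but the lift requires some care because a single periodic point in $Y_i$ can have multiple preimages in the covering SFT that must be counted consistently. Once these ingredients are in place, the compactness extraction and the automatic $\Aut(Y)$-invariance from the first step combine to yield the theorem.
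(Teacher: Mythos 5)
Your proposal is correct and follows essentially the same route as the paper: decompose via Lemma~\ref{lem:finite_union} into forward transitive sofic pieces $Y_1,\dots,Y_k$ of full entropy, invoke Bowen's equidistribution of periodic points toward the unique measure of maximal entropy on each piece, use entropy minimality to show that periodic points outside (and in the pairwise intersections of) the $Y_i$ contribute negligibly, and extract a convergent subsequence of the weight vectors along $\mathcal{S}$. The one genuine point of divergence is your first step: you observe that every automorphism of $Y$ permutes $\per_q(Y)$ for each $q$, so each $\nu_n$ is \emph{exactly} $\Aut(Y)$-invariant and hence every weak* subsequential limit is automatically characteristic. The paper instead derives invariance of the limit $\sum_i c_i\mu_i$ by arguing that automorphisms permute the $Y_i$, that conjugate components satisfy $|\per_{\leq n}(Y_i)|=|\per_{\leq n}(Y_j)|$, and hence that the limiting coefficients match on conjugacy classes. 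Your observation is simpler and slightly stronger (it shows every accumulation point of $\{\nu_n\}$ is characteristic with no bookkeeping of coefficients), and it reduces the entire remaining work to showing that some subsequential limit along $\mathcal{S}$ has maximal entropy, which your weight-extraction argument delivers. Two small points to tidy up: your hypothesis adjustment replacing $f$ by $\max\{f,2R+2\}$ to meet the requirement $f>2R+1$ of Lemma~\ref{lem:finite_union} is a good catch (the paper is silent on it), and in the decomposition $\nu_n=\sum_i c_i^{(n)}\nu_n^{(i)}+r_n$ you should note explicitly that the overlaps $\per_{\leq n}(Y_i\cap Y_j)$ are also exponentially negligible (again by entropy minimality), so that the double-counting is absorbed into the signed error term $r_n$.
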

In particular, this result shows that every accumulation point in the weak* topology of the set of measures $\{\nu_n\}_{n=1}^{\infty}$ is characteristic and is the measure of maximal entropy.

\begin{proof}
Let $Y\in\mathcal{Y}_{a,f,R}$ and let $Y_1,\dots,Y_k\subseteq Y$ be as in Lemma~\ref{lem:finite_union}.  Suppose $\mathcal{S}\subseteq\N$ is given.  For each $i=1,\dots,k$ let $\mu_i$ be the unique measure of maximal entropy supported on $Y_i$.  We first show any measure of the form 
$$ 
\mu:=\sum_{i=1}^k\tilde{c}_i\mu_i 
$$ 
is a characteristic measure on $Y$, provided $\tilde{c}_i\in[0,1]$ for all $i=1, \dots, k$, $\sum c_i=1$, and whenever $Y_i$ and $Y_j$ are topologically conjugate we have $\tilde{c}_i=\tilde{c}_j$.

Any $\alpha\in\Aut(Y)$ permutes $Y_1,\dots,Y_k$ and so permutes the measures $\mu_1,\dots,\mu_k$, 
as each $\mu_i$ is the unique measure of maximal entropy on $Y_i$ for all $=1, \dots, ki$.  Note that if this permutation sends $\mu_i$ to $\mu_j$ then $Y_i$ is topologically conjugate to $Y_j$ and so $\tilde{c}_i=\tilde{c}_j$.  Therefore $\alpha_*\mu=\mu$.  Since $\alpha\in\Aut(Y)$ is arbitrary, $\mu$ is an $\Aut(Y)$-characteristic measure.

Next, find a sequence $\{n_t\}_{t=1}^{\infty}$ of elements of $\mathcal{S}$ along which 
$$ 
c_i:=\lim_{t\to\infty}\frac{\left|\per_{\leq n_t}(Y_i)\right|}{\left|\bigcup_{j=1}^k\per_{\leq n_t}(Y_j)\right|}
$$ 
exists for all $i=1,\dots,k$.  Note that $c_i\in[0,1]$ for all $i=1, \dots, k$.  Entropy minimality of $Y_i$ implies that for any $i<j$, we have that $h_{\ttp}(Y_i\cap Y_j)<h_{\ttp}(Y_i)$.  Thus it follows that 
$$ 
\lim_{n\to\infty}\frac{|\per_{\leq n}(Y_i\cap Y_j)|}{|\per_{\leq n}(Y_i)|}=0. 
$$ 
Defining $P_n(Y_i):=\per_{\leq n}(Y_i)\setminus\left(\bigcup_{j\neq i}\per_{\leq n}(Y_j)\right)$,  we have that 
\begin{equation}\label{eq:switch}
\lim_{n\to\infty}\frac{|P_n(Y_i)|}{|\per_{\leq n}(Y_i)|}=1 
\end{equation} 
and so 
\begin{equation}\label{eq:switch2}
c_i=\lim_{t\to\infty}\frac{\left|P_{n_t}(Y_i)\right|}{\left|\bigcup_{j=1}^kP_{n_t}(Y_j)\right|}. 
\end{equation} 
Finally note that if $Y_i$ is topologically conjugate to $Y_j$ then $|\per_{\leq n_t}(Y_i)|=|\per_{\leq n_t}(Y_j)|$ for all $t$, so $c_i=c_j$.  It follows that 
$$ 
\mu_Y:=\sum_{i=1}^kc_i\mu_i 
$$ 
is an $\Aut(Y)$-characteristic measure. 

We next show that $\mu_Y$ is the weak* limit of the sequence $\{\nu_{n_t}\}_{t=1}^{\infty}$. 
Combining the results of Bowen~\cite[Theorem~34]{bowen} and~\cite[Corollary~6.7]{bowen2}, the periodic points are equidistributed with respect to the measure of maximal entropy in any forward transitive subshift.  Thus 
since $Y_i$ is a forward transitive sofic shift for each $i=1,\dots,k$, 
we have that 
$$ 
\frac{1}{|\per_{\leq n_t}(Y_i)|}\sum_{z\in \per_{\leq n_t}(Y_i)}\delta_z\xrightarrow[t\to\infty]{} \mu_i.
$$ 
Combining this with Equation~\eqref{eq:switch} we have that  
$$ 
\frac{1}{|P_{n_t}(Y_i)|}\sum_{z\in P_{n_t}(Y_i)}\delta_z\xrightarrow[t\to\infty]{} \mu_i.
$$ 
Therefore, $\mu_Y$ is the weak* limit 
$$ 
\lim_{t\to\infty}\sum_{i=1}^k\frac{\left|P_{n_t}(Y_i)\right|}{\left|\bigcup_{j=1}^kP_{n_t}(Y_j)\right|}\cdot\frac{1}{|P_{n_t}(Y_i)|}\sum_{z\in P_{n_t}(Y_i)}\delta_z=\lim_{t\to\infty}\frac{1}{\left|\bigcup_{j=1}^kP_{n_t}(Y_j)\right|}\sum_{z\in\bigcup_{j=1}^kP_{n_t}(Y_j)}\delta_z. 
$$ 
Therefore, again using from~\eqref{eq:switch}, it follows that $\mu_Y$ is the weak-* limit of the sequence 
$$ 
\frac{1}{\left|\per_{\leq n_t}\left(\bigcup_{i=1}^kY_i\right)\right|}\sum_{z\in \per_{\leq n_t}\left(\bigcup_{i=1}^kY_i\right)}\delta_z. 
$$ 

Finally, let $X\in\mathcal{X}_{a,f}$ and let $\varphi\colon X\to Y$ be a factor map that can be implemented by a range $R$ block code.  For each periodic point $y\in Y$, 
there is a periodic point $x\in X$ such that 
$y=\varphi(x)$.  Let $X^{\prime}\subseteq X$ be the shift of finite type obtained by forbidding all words of length $f$ that do not appear in the point $x$.  Note that $X^{\prime}\in\mathcal{X}_{a,f}$, $X^{\prime}$ is forward transitive, and $x\in X^{\prime}$.  Therefore $y\in\varphi(X^{\prime})$.  
It follows that every periodic point in $Y$ lies in $\varphi(X^{\prime})$ for some $X^{\prime}\subseteq X$ with $X^{\prime}\in\mathcal{X}_{a,f}$.  Since there are only finitely many such subshifts and all subshifts with entropy $h_{\ttp}(Y)$ already appear in the enumeration $Y_1,\dots,Y_k$, it follows that all periodic points in $Y\setminus\left(\bigcup_{i=1}^kY_i\right)$ are in the union of a finite number of shifts of the form $\varphi(Z)$, where $Z\in\mathcal{X}_{a,f}$ and $h_{\ttp}(\varphi(Z))<h_{\ttp}(Y)$.  Therefore 
$$ 
\lim_{n\to\infty}\frac{\left|\per_{\leq n}\left(\bigcup_{i=1}^kY_i\right)\right|}{|\per_{\leq n}(Y)|}=1 
$$ 
and so $\mu_Y$ is the weak* limit of the sequence 
\begin{equation*}
\frac{1}{|\per_{\leq n_t}(Y)|}\sum_{z\in \per_{\leq n_t}(Y)}\delta_z. \quad\qedhere
\end{equation*}
\end{proof}

\begin{lemma}\label{lem:consistent}
There exists an infinite subset $\mathcal{S}\subseteq\N$ such that for all $a,f,R\in\N$ and all $Y\in\mathcal{Y}_{a,f,R}$, the weak* limit 
\begin{equation}
\label{eq:limit-exists}
\mu_Y:=\lim_{s\in\mathcal{S}, s\to\infty}\frac{1}{|\per_{\leq s}(Y)|}\sum_{z\in \per_{\leq s}(Y)}\delta_z
\end{equation}
exists and is a characteristic measure of maximal entropy on $Y$. 
\end{lemma}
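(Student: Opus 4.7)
The plan is to use a Cantor-style diagonal extraction, with Theorem~\ref{th:char_sofic} (and in particular its ``moreover'' clause, which allows the converging subsequence to be chosen from within any prescribed infinite $\mathcal{S}$) as the engine that produces, for each individual sofic shift, a subsequence along which the empirical periodic-point measures converge to a characteristic measure of maximal entropy.

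First I would observe that $\mathcal{Y}_{a,f,R}$ is \emph{finite} for each triple $(a,f,R)\in\N^3$: the collection $\mathcal{X}_{a,f}$ is determined by a choice of subset of $\{0,\ldots,a-1\}^f$, of which there are finitely many, and for each $X\in\mathcal{X}_{a,f}$ there are only finitely many block codes $\{0,\ldots,a-1\}^{2R+1}\to\{0,\ldots,a-1\}$ of range at most $R$. Consequently $\bigcup_{(a,f,R)\in\N^3}\mathcal{Y}_{a,f,R}$ is a countable union of finite sets, hence countable; enumerate its elements as $Y^{(1)},Y^{(2)},\ldots$.

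Next I would construct a decreasing chain of infinite sets $\N=\mathcal{S}_0\supseteq\mathcal{S}_1\supseteq\mathcal{S}_2\supseteq\cdots$ such that for each $k\geq 1$ the limit in~\eqref{eq:limit-exists} taken along $\mathcal{S}_k$ for the shift $Y^{(k)}$ exists and is a characteristic measure of maximal entropy on $Y^{(k)}$. Inductively, given $\mathcal{S}_{k-1}$, apply Theorem~\ref{th:char_sofic} to $Y^{(k)}$ with the infinite set $\mathcal{S}_{k-1}$ to obtain an infinite $\mathcal{S}_k\subseteq\mathcal{S}_{k-1}$ with the required convergence property. Then extract a diagonal by choosing $s_k\in\mathcal{S}_k$ with $s_1<s_2<\cdots$ and setting $\mathcal{S}:=\{s_k:k\geq 1\}$. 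For any fixed $k_0$, the tail $\{s_k:k\geq k_0\}$ lies entirely inside $\mathcal{S}_{k_0}$, so the limit of the periodic-point empirical measures on $Y^{(k_0)}$ along $\mathcal{S}$ coincides with the limit along $\mathcal{S}_{k_0}$, which by construction is a characteristic measure of maximal entropy on $Y^{(k_0)}$. Since every $Y\in\mathcal{Y}_{a,f,R}$ appears as some $Y^{(k_0)}$ in the enumeration, this $\mathcal{S}$ has the stated property.

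The main (and mild) point to pin down is the countability of the collection to which Theorem~\ref{th:char_sofic} is applied; once that and the flexibility of choosing the converging subsequence from within any prescribed $\mathcal{S}$ are in hand, the diagonal argument is routine and I do not anticipate a serious obstacle.
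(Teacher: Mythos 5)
Your proposal is correct and follows essentially the same strategy as the paper: both reduce to the countable family $\bigcup_n\mathcal{Y}_{n,n,n}$ (a countable union of finite sets), iterate Theorem~\ref{th:char_sofic} with its ``moreover'' clause to build a nested chain of infinite sets, and then diagonalize. The paper's bookkeeping (retaining the $v+1$ smallest elements of each $\mathcal{S}_v$ so that the final intersection is infinite) is just a slightly more cumbersome version of your cleaner ``pick an increasing $s_k\in\mathcal{S}_k$'' diagonal, and your observation that limits depend only on tails closes the argument in the same way.
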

\begin{proof}
Note that 
$$ 
\bigcup_{a=1}^{\infty}\bigcup_{f=1}^{\infty}\bigcup_{R=1}^{\infty}\mathcal{Y}_{a,f,R}=\bigcup_{n=1}^{\infty}\mathcal{Y}_{n,n,n} 
$$ 
and so  it suffices to ensure sure that the limit in~\eqref{eq:limit-exists} exists for all $Y\in\bigcup_{n=1}^{\infty}\mathcal{Y}_{n,n,n}$.  We construct $\mathcal{S}$ iteratively using a diagonalization argument.  Enumerate the elements (of the finite set) $\mathcal{Y}_{1,1,1}$ as $Y_1,\dots,Y_r$.  Begin by setting $\mathcal{S}_0:=\N$.  By Theorem~\ref{th:char_sofic}, there exists a sequence $\{n_t\}_{t=1}^{\infty}$ along which 
$$
\lim_{t\to\infty}\frac{1}{|\per_{\leq n_t}(Y_1)|}\sum_{z\in \per_{\leq n_t}(Y_1)}\delta_z
$$
exists and is a characteristic measure of maximal entropy on $Y_1$.  Set $\mathcal{S}_1:=\{1\}\cup\{n_t\colon t\in\N\}$.  Suppose we have constructed nested infinite sets 
$$ 
\mathcal{S}_0\supseteq\mathcal{S}_1\supseteq\cdots\supseteq\mathcal{S}_u 
$$ 
such that $\mathcal{S}_u$ contains the $v+1$ smallest elements of $\mathcal{S}_v$ for all $v<u$ and such that 
$$ 
\lim_{s\in\mathcal{S}_v, s\to\infty}\frac{1}{|\per_{\leq s}(Y_v)|}\sum_{z\in \per_{\leq s}(Y_v)}\delta_z
$$ 
exists and is a characteristic measure of maximal entropy on $Y_v$ for all $0<v\leq u$.  If $u=r$ this part of the construction ends and we move onto the next step.  Otherwise, use Theorem~\ref{th:char_sofic} with the infinite set $\mathcal{S}_u$ to find a subsequence $\{n_t\}_{t=1}^{\infty}$ of elements of $\mathcal{S}_u$ for which 
$$ 
\lim_{t\to\infty}\frac{1}{|\per_{\leq n_t}(Y_{u+1})|}\sum_{z\in \per_{\leq n_t}(Y_{u+1})}\delta_z
$$ 
exists and is a characteristic measure on $Y_{u+1}$.  Define the set $\mathcal{S}_{u+1}$ to be the union of $\{n_t\colon t\in\N\}$ with the $u$ smallest elements of $\mathcal{S}_u$.  This part of the construction terminates when we construct $\mathcal{S}_r$.  For convenience in the next step, we define $\mathcal{S}^{(1)}:=\mathcal{S}_r$. 

Suppose we have defined a nested sequence of infinite sets 
$$ 
\mathcal{S}^{(1)}\supseteq\mathcal{S}^{(2)}\supseteq\cdots\supseteq\mathcal{S}^{(j)} 
$$ 
such that for all $Y\in\mathcal{Y}_{j,j,j}$ the weak* limit 
$$ 
\lim_{s\in\mathcal{S}^{(j)},s\to\infty}\frac{1}{|\per_{\leq s}(Y)|}\sum_{z\in \per_{\leq s}(Y)}\delta_z
$$
exists and is a characteristic measure of maximal entropy on $Y$.  Suppose further that $\mathcal{S}^{(j)}$ contains the $i$ smallest elements of $\mathcal{S}^{(i)}$ for all $i<j$.  Proceed as in the construction of $\mathcal{S}^{(1)}$, but using $\mathcal{S}^{(j)}$ in place of $\mathcal{S}_0$ at the start, to build an infinite set $\mathcal{S}^{(j+1)}\subseteq\mathcal{S}^{(j)}$ that contains the $j$ smallest elements of $\mathcal{S}^{(j)}$ and for which 
$$ 
\lim_{s\in\mathcal{S}^{(j+1)},s\to\infty}\frac{1}{|\per_{\leq s}(Y)|}\sum_{z\in \per_{\leq s}(Y)}\delta_z
$$
exists and is a characteristic measure of maximal entropy on $Y$ for all $Y\in\mathcal{Y}_{j+1,j+1,j+1}$.

Continuing inductively, we construct an infinite sequence of nested, infinite sets 
$$ 
\mathcal{S}^{(1)}\supseteq\mathcal{S}^{(2)}\supseteq\cdots\supseteq\mathcal{S}^{(j)}\supseteq\cdots 
$$ 
such that for any $i<j$ the set $\mathcal{S}^{(j)}$ contains the $i$ smallest elements of $\mathcal{S}^{(i)}$ and such that for any $Y\in\mathcal{Y}_{j,j,j}$ the weak* limit 
$$ 
\lim_{s\in\mathcal{S}^{(j)},s\to\infty}\frac{1}{|\per_{\leq s}(Y)|}\sum_{z\in \per_{\leq s}(Y)}\delta_z
$$
exists and is a characteristic measure of maximal entropy on $Y$.  We define the (nonempty by construction) set $\mathcal{S}:=\bigcap_{j=1}^{\infty}\mathcal{S}^{(j)}$.  
\end{proof}

We use this to motivate the relevant class of shifts and approximations.  
\begin{definition}
\label{def:const}
Let $\mathcal{S}\subseteq\N$ be the set constructed in Lemma~\ref{lem:consistent}.  For $a,f,R\in\N$ and $Y\in\mathcal{Y}_{a,f,R}$, let 
$$
\mu_Y:=\lim_{s\in\mathcal{S},s\to\infty}\frac{1}{|\per_{\leq s}(Y)|}\sum_{z\in \per_{\leq s}(Y)}\delta_z. 
$$
For $k,m\in\N$, define $A(a,f,R,k,m)\in\N$ to be the least element of $\mathcal{S}$ such that for all $Y\in\mathcal{Y}_{a,f,R}$ and all $w\in\bigcup_{i=1}^m\mathcal{L}_i(Y)$ we have 
$$ 
\left|\mu_Y([w])-\frac{1}{|\per_{\leq s}(Y)|}\sum_{z\in \per_{\leq s}(Y)}\delta_z([w])\right|<\frac{1}{k} 
$$ 
for all $s\in\mathcal{S}$ with $s\geq A(a,f,R,k,m)$.

For $n\in\N$, let $m=m(n)$ be 
$$ 
m:=\inf\{\mathcal{S}\setminus\{1,2,\dots,n\}\}. 
$$ 
Define $\omega\colon\N\to\N$ by 
\begin{equation} 
\label{def:omega}
\omega(n):=-n+2m+A(m,m,m,m,m)\cdot m^{2m+1}
\end{equation} 
\end{definition}

We are now ready to formulate a version of Theorem~\ref{th:char-periodic} that 
relies on a stronger assumption on  how well-approximable the shift is to obtain the stronger conclusion that the resulting characteristic measure is a measure of maximal entropy.

\begin{proposition}
\label{prop:well-approx-factors}
Let $X$ be a shift and $(X_n)_{n\in\N}$ its SFT cover.  
If there are infinitely many $n\in\N$ such that $X_n=X_{n+\omega(n)}$ for the function $\omega(n)$ defined in~\eqref{def:omega}, then  every subshift factor of $X$ has a characteristic measure which is a measure of maximal entropy.
\end{proposition}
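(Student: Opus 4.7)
The plan is to upgrade the argument of Theorem~\ref{th:char-periodic} by replacing the uniform measures on short periodic orbits with the characteristic measures of maximal entropy $\mu_{Y_n}$ supplied by Lemma~\ref{lem:consistent}, and to exploit the quantitative rate $A$ from Definition~\ref{def:const} to transfer both automorphism invariance and the maximal-entropy property to the weak* limit. Fix a factor map $\varphi\colon X\to Y$ realized by a range-$R_\varphi$ block code, set $Y_n:=\varphi(X_n)$ for $n$ large, so that $Y=\bigcap_n Y_n$ and each $Y_n\in\mathcal{Y}_{a,n,R_\varphi}$ for the common alphabet size $a$. By Lemma~\ref{lem:consistent}, each such $Y_n$ carries a characteristic measure of maximal entropy $\mu_{Y_n}$. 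Along the infinite set $\{n_j\}$ where $X_{n_j}=X_{n_j+\omega(n_j)}$, extract a weak* convergent subsequence $\mu_{Y_{n_j}}\to\mu$; since the subshifts $Y_n$ are nested and intersect in $Y$, the limit $\mu$ is supported on $Y$.

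The main step is invariance under $\Aut(Y)$. Fix $\beta\in\Aut(Y)$ with symmetric range $R_\beta$. As in the proof of Theorem~\ref{th:char-periodic}, for all sufficiently large $n$, $\beta$ restricts to a topological conjugacy $Y_n\to\beta(Y_n)$, with both subshifts in $\mathcal{Y}_{a,n,R_\varphi+R_\beta}$, and $\mathcal{L}_k(Y_n)=\mathcal{L}_k(\beta(Y_n))$ for all $k\leq n+\omega(n)-2R_\beta$. Set $m=m(n):=\inf(\mathcal{S}\setminus\{1,\dots,n\})$ and $s:=A(m,m,m,m,m)\in\mathcal{S}$. For $n$ large enough that $a, R_\varphi, R_\beta\leq m$, both $Y_n$ and $\beta(Y_n)$ lie in $\mathcal{Y}_{m,m,m}$, and the definition $\omega(n)=-n+2m+A(m,m,m,m,m)\cdot m^{2m+1}$ is calibrated precisely so that $s\cdot a^{\max\{n,2(R_\varphi+R_\beta)+1\}}<n+\omega(n)-2R_\beta$, allowing Lemma~\ref{lem:language_periodic} to conclude $\per_{\leq s}(Y_n)=\per_{\leq s}(\beta(Y_n))$. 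Since $\beta$ is a conjugacy between $Y_n$ and $\beta(Y_n)$, it bijects periodic orbits of every period, so $\beta_*\mu_{Y_n}=\mu_{\beta(Y_n)}$. Writing $\nu_s$ for the common uniform measure on $\per_{\leq s}(Y_n)=\per_{\leq s}(\beta(Y_n))$, the defining property of $A$ applied inside $\mathcal{Y}_{m,m,m}$ gives, for every word $w$ with $|w|\leq m$,
$$
\bigl|\mu_{Y_n}([w])-\nu_s([w])\bigr|<\frac{1}{m}\qquad\text{and}\qquad\bigl|\mu_{\beta(Y_n)}([w])-\nu_s([w])\bigr|<\frac{1}{m},
$$
hence $|\beta_*\mu_{Y_n}([w])-\mu_{Y_n}([w])|<2/m$. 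Letting $n\to\infty$ along the subsequence forces $m\to\infty$, so $\beta_*\mu=\mu$ on every cylinder; since $\beta\in\Aut(Y)$ was arbitrary, $\mu$ is characteristic.

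For maximal entropy, observe that $\mathcal{L}_k(Y_n)=\mathcal{L}_k(Y)$ for $k\leq n-2R_\varphi$, so $h_{\ttp}(Y_n)\to h_{\ttp}(Y)$, and each $\mu_{Y_n}$ has entropy $h_{\ttp}(Y_n)$ by Lemma~\ref{lem:consistent}. Upper semi-continuity of the entropy map on the expansive system $(\mathcal{A}^\Z,\sigma)$ yields $h_\mu(\sigma)\geq\limsup_j h_{\mu_{Y_{n_j}}}(\sigma)=h_{\ttp}(Y)$, and the matching upper bound follows from $\mathrm{supp}(\mu)\subseteq Y$, so $\mu$ is a measure of maximal entropy on $Y$. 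The main obstacle is the quantitative bookkeeping in the invariance step: the growth of $\omega(n)$ must simultaneously dominate the requirement of Lemma~\ref{lem:language_periodic} (forcing the periodic orbits of period at most $s$ in $Y_n$ and $\beta(Y_n)$ to coincide) and the convergence threshold encoded by $A$ (forcing $\mu_{Y_n}$ and $\mu_{\beta(Y_n)}$ to be uniformly $O(1/m)$-close on cylinders of length up to $m$), while absorbing the fixed but arbitrary range $R_\beta$ into $m=m(n)\to\infty$.
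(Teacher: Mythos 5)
Your proposal is correct and follows essentially the same route as the paper's proof: approximate $Y$ by the sofic images $Y_n=\varphi(X_n)$, use Lemma~\ref{lem:language_periodic} together with the calibration of $\omega$ to force $\per_{\leq A(m,\dots,m)}(Y_n)=\per_{\leq A(m,\dots,m)}(\beta(Y_n))$, compare $\mu_{Y_n}$ and $\mu_{\beta(Y_n)}$ to the common periodic-point empirical measure via the defining property of $A$, and pass to a weak* limit using $\beta_*\mu_{Y_n}=\mu_{\beta(Y_n)}$ and upper semicontinuity of entropy. The only difference is cosmetic bookkeeping (you track $m=m(n)$ directly rather than first reducing to indices $n\in\mathcal{S}$ with $X_n=X_{n+\tau(n)}$), so no further comment is needed.
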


We note that Example~\ref{ex:nonempty} shows that the class of shifts verifying this condition is larger than that of shifts of finite type. 

\begin{proof}
Let 
$$ 
\tau(n):=A(n,n,n,n,n)\cdot n^{2n+1}+n 
$$ 
and note that, by definition of $\omega$, for any $n\in\N$ such that $X_n=X_{n+\omega(n)}$, there exists $m\geq n$ with $m\in\mathcal{S}$ and such that $X_m=X_{m+\tau(m)}$.  Therefore our assumption that there are infinitely many $n$ such that $X_n=X_{n+\omega(n)}$ implies that there exist infinitely many $m\in\mathcal{S}$ such that $X_m=X_{m+\tau(m)}$.  To avoid unnecessary confusion, we make a small abuse of notation and simply state that 
\begin{equation}\label{eq:secret_assumption}
\text{there are infinitely many $n\in\mathcal{S}$ such that }X_n=X_{n+\tau(n)}.
\end{equation}

Let $Y$ be a subshift factor of $X$  and  let $\varphi\colon X\to Y$ be a factor map with range $R_{\varphi}$.  
  Setting $a:=\max\{|\mathcal{L}_1(X)|,|\mathcal{L}_1(Y)|\}$, without loss of generality (renaming the letters of the alphabets if necessary) we can assume that $X,Y\subseteq\{0,1,\dots,a-1\}^{\Z}$.  
Since $X_n$ is defined by forbidding only words of length $n$, we have that 
$X_n\in\mathcal{X}_{a,n}$ for all $n\in\N$.  

We begin as in the proof of Theorem~\ref{th:char-periodic}.  For each $n\geq2R_{\varphi}+1$, 
define $Y_n:=\varphi(X_n)$ and observe that $\mathcal{L}_k(Y_n)=\mathcal{L}_k(Y)$ for all $k\leq n-2R_{\varphi}$.  
For all such $n$, we have that $Y_n\in\mathcal{Y}_{a,n,R_{\varphi}}$.  
Let $\beta\in\Aut(Y)$ and let $R_{\beta}$ be a symmetric range for $\beta$.  For any $n\geq2R_{\varphi}+2R_{\beta}+1$ and any $k\leq n-2R_{\beta}$, 
we have that $\mathcal{L}_k(\beta(Y_n))=\mathcal{L}_k(Y)$.  Provided that $n\geq2R_{\beta}+\max\{2R_{\varphi},2R_{\beta}\}+1$, we have that $\beta$ is a topological conjugacy between $Y_n$ and $\beta(Y_n)$ and that 
$$ 
\mathcal{L}_k(Y)=\mathcal{L}_k(Y_n)=\mathcal{L}_k(\beta(Y_n)) \quad \text{ for all }k\leq n-2R_{\beta}. 
$$ 
It follows immediately that $\beta(Y_n)\in\mathcal{Y}_{a,n,R_{\varphi}+R_{\beta}}$ for all such $n$ and that $Y_n\in\mathcal{Y}_{a,n,R_{\varphi}}\subseteq\mathcal{Y}_{a,n,R_{\varphi}+R_{\beta}}$.  For any $n$ such that $X_n=X_{n+T}$ 
(meaning we also have that $Y_n=Y_{n+T}$), we have that 
$$
\mathcal{L}_k(Y)=\mathcal{L}_k(Y_n)=\mathcal{L}_k(\beta(Y_n)) \quad \text{ for all }k\leq n+T-2R_{\beta}. 
$$
For such $n$, using the parameters $a$, $f:=n$, $R:=R_{\varphi}+R_{\beta}$, and $N:=n+T-2R_{\beta}$ in Lemma~\ref{lem:language_periodic}, it follows that 
$\per_{\leq p}(Y_n)=\per_{\leq p}(\beta(Y_n))$ so long as $T>p\cdot a^{\max\{n,2R_{\varphi}+2R_{\beta}+1\}}+2R_{\beta}-n$.  In particular, when 
$$T>A(n,n,n,n,n)\cdot a^{\max\{n,2R_{\varphi}+2R_{\beta}+1\}}+2R_{\beta}-n,$$ 
it follows that 
\begin{equation}\label{eq:same}
\per_{\leq A(n,n,n,n,n)}(Y_n)=\per_{\leq A(n,n,n,n,n)}(\beta(Y_n)). 
\end{equation}
For fixed $\varphi$ and $\beta$, by Equation~\eqref{eq:secret_assumption}, there are infinitely many $n\in\mathcal{S}$ for which $X_n=X_{n+\tau(n)}$ where 
$$
\tau(n)=A(n,n,n,n,n)\cdot n^{2n+1}+n>A(n,n,n,n,n)\cdot a^{\max\{n,2R_{\varphi}+2R_{\beta}+1\}}+2R_{\beta}-n
$$ 
(since $n>\max\{R_{\varphi}+R_{\beta},a\}$ for all but finitely many $n$).  For any such $n$, recall that we have $Y_n,\beta(Y_n)\in\mathcal{Y}_{a,n,R_{\varphi}+R_{\beta}}\subseteq\mathcal{Y}_{n,n,n}$.  
Therefore, by definition of $A(n,n,n,n,n)$ and Theorem~\ref{th:char_sofic}, for any $w\in\mathcal{L}_n(Y_n)$ we have 
(for convenience, we write $A(n) = A(n,n,n,n,n)$ in this calculation)
\begin{align*} 
|\mu_{Y_n}([w])  -\mu_{\beta(Y_n)}([w])| &  \leq   \Big\vert\mu_{Y_n}([w])-\frac{1}{|\per_{\leq A(n)}(Y_n)|}\sum_{z\in \per_{\leq A(n)}(Y_n)}\delta_z([w])\Big\vert \\ 
 + & \Big\vert\mu_{\beta(Y_n)}([w])-\frac{1}{|\per_{\leq A(n)}(Y_n)|}\sum_{z\in \per_{\leq A(n)}(Y_n)}\delta_z([w])\Big\vert \\
 = & \Big\vert\mu_{Y_n}([w])-\frac{1}{|\per_{\leq A(n)}(Y_n)|}\sum_{z\in \per_{\leq A(n)}(Y_n)}\delta_z([w])\Big\vert \\ 
 +  & \Big\vert\mu_{\beta(Y_n)}([w])-\frac{1}{|\per_{\leq A(n)}(\beta(Y_n))|}\sum_{z\in \per_{\leq A(n)}(\beta(Y_n))}\delta_z([w])\Big\vert, 
\end{align*} 
where we use Equation~\eqref{eq:same} to make the change from $Y_n$ to $\beta(Y_n)$.  By definition of $A(n,n,n,n,n)$ and the fact that $Y_n,\beta(Y_n)\in\mathcal{Y}_{n,n,n}$, it follows 
that for any $w\in\mathcal{L}_n(Y_n)=\mathcal{L}_n(\beta(Y_n))$ we have  that 
\begin{equation}\label{eq:dist}
|\mu_{Y_n}([w])-\mu_{\beta(Y_n)}([w])|\leq\frac{2}{n}. 
\end{equation} 
By Theorem~\ref{th:char_sofic}, the measure $\mu_{Y_n}$ is the average of all ergodic measures of maximal entropy on $Y_n$ and $\mu_{\beta(Y_n)}$ is the average of all ergodic measures of maximal entropy on $\beta(Y_n)$.  
Since $\beta$ is a topological conjugacy between $Y_n$ and $\beta(Y_n)$, it follows that $\beta_*\mu_{Y_n}=\mu_{\beta(Y_n)}$.  Combining this with Equation~\eqref{eq:dist}, if $\mu$ is any weak* limit of the sequence $\{\mu_{Y_n}\}_{n=1}^{\infty}$, 
 then $\beta_*\mu=\mu$.  Since $\beta\in\Aut(Y)$ is arbitrary, any such weak* limit is an $\Aut(Y)$-characteristic measure supported on $Y=\bigcap_{n=1}^{\infty}Y_n$.  Finally, note that $h_{\mu_{Y_n}}(\sigma)=h_{\ttp}(Y_n)\geq h_{\ttp}(Y)$ for all $n\in\N$ and  so any weak* limit point is also a measure of maximal entropy on $Y$.
\end{proof}

\subsection{Not every shift is a factor of a well-approximable language stable shift}
\label{sec:not-all-wa}

Suppose $(a_n)_{n\in\N}$ is a non-decreasing sequence of positive integers such that  $\lim_na_n=\infty$.  Define $\mathcal{Z}(a_n)$ to be the set of all shifts $(X,\sigma)$ such that $X$ has no minimal forbidden words of length between $n$ and $n+a_n$ for infinitely many $n\in\N$.
Taking all choices of such sequences $(a_n)_{n\in\N}$, we obtain all of the language stable shifts, and we show that there is some shift that is not a factor of $\mathcal{Z}(a_n)$ for some particular choice of  sequence.

\begin{proposition}\label{prop:nonFactor}
There exists a non-decreasing sequence of positive integers $(a_n)_{n\in\N}$  and a subshift $B\subseteq\{0,1\}^{\Z}$ such that $B$ is not a factor of any element of $\mathcal{Z}(a_n)$.
\end{proposition}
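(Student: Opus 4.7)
The plan is a diagonalization against the countable family of sofic shifts arising as bounded-range factors of bounded-description shifts of finite type. For each triple $(a,f,R) \in \N^3$, the set $\mathcal{Y}_{a,f,R}^{\{0,1\}} := \{Y \in \mathcal{Y}_{a,f,R} : Y \subseteq \{0,1\}^{\Z}\}$ is finite, so $\mathcal{Y}^* := \bigcup_{a,f,R\geq 1} \mathcal{Y}_{a,f,R}^{\{0,1\}}$ is countable. The key observation, implicit in the proof of Theorem~\ref{th:char-periodic}, is: if $X \in \mathcal{Z}(a_n)$ has alphabet size $a$ and $\varphi\colon X \to B$ is a range $R$ block code, then for every $n \geq 2R+1$ with $X_n = X_{n+a_n}$ one has $\varphi(X_n) \in \mathcal{Y}_{a,n,R}^{\{0,1\}}$ and $\mathcal{L}_k(\varphi(X_n)) = \mathcal{L}_k(B)$ for every $k \leq n + a_n - 2R$.

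I will fix a super-linearly growing sequence, say $a_n := n^2$, and build $B$ level by level by committing to a partial language $\mathcal{L}_L(B) \subseteq \{0,1\}^L$ at each $L \in \N$. At each stage $L$, the commitment will (i) extend $\mathcal{L}_{L-1}(B)$ validly, (ii) be kept a substantial subset of the maximal consistent extension so the construction can be continued indefinitely and yield a non-empty $B$, and (iii) differ from $\mathcal{L}_L(Y)$ for every $Y \in \mathcal{Y}_{a,n,R}^{\{0,1\}}$ with $\max(a,R) \leq g(L) := \lfloor \log_2 L \rfloor$ and $n$ the unique integer with $n + a_n = L + 2R$. This is possible by a counting argument: at length $L$, the number of bad partial languages to avoid is at most $g(L)^2 \cdot \max_{a,R \leq g(L)} 2^{2a^n + a^{2R+1}} \leq 2^{(\log L)^{O(\log L)}}$, whereas the number of valid extensions of a sufficiently rich $\mathcal{L}_{L-1}(B)$ is at least $2^{\Omega(2^L)}$, which dominates by a wide margin.

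To conclude: assume for contradiction that $B = \varphi(X)$ for some $X \in \mathcal{Z}(a_n)$ of alphabet $a$ and some range $R$ factor $\varphi$. The set $I_X := \{n : X_n = X_{n+a_n}\}$ is infinite, so it contains some $n$ with $g(n + a_n - 2R) \geq \max(a,R)$; in fact, all but finitely many $n \in I_X$ have this property. For such $n$, setting $L := n + a_n - 2R$, the key observation yields $\mathcal{L}_L(B) = \mathcal{L}_L(\varphi(X_n))$, while $\varphi(X_n) \in \mathcal{Y}_{a,n,R}^{\{0,1\}}$ is one of the sofic shifts explicitly avoided at level $L$ in the construction — a contradiction.

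The central obstacle is the inductive maintenance of the ``largeness'' invariant on $\mathcal{L}_L(B)$: one must show that at each level $L$, among the many valid extensions of $\mathcal{L}_{L-1}(B)$, at least one avoids all bad targets \emph{and} preserves the largeness invariant so that the same counting argument applies at level $L+1$. Although the counting inequality itself is very comfortable, formalizing this requires careful bookkeeping — for example, by maintaining the invariant that the gap $|E_L| - |\mathcal{L}_L(B)|$ between the maximal consistent extension $E_L$ and the committed set is bounded by a small function of $L$, guaranteeing simultaneously the ability to distinguish from each bad target at every future level and the non-vacuity of the resulting subshift.
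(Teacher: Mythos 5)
Your overall strategy is genuinely different from the paper's. The paper does not fix $(a_n)$ in advance: it first builds $B$ as a nested intersection of shifts $B_{r,t_r}$, each consisting of bi-infinite concatenations of two words $w_{0,r},w_{1,r}$, chooses $t_r$ so that $\mathcal{L}_{(4|\mathcal{Y}_{r,r,r}|+4)|w_{0,r}|}(B_{r,t_r})$ differs from the length-$L_r$ language of every $Y\in\mathcal{Y}_{r,r,r}$ (using that the languages of the $B_{r,t}$ for distinct $t$ are pairwise disjoint at that length, so finiteness of $\mathcal{Y}_{r,r,r}$ suffices --- no counting of sofic shifts is needed), and only \emph{then} defines $a_n:=2(4|\mathcal{Y}_{n,n,n}|+4)|w_{0,n}|$ in terms of the construction. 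You instead commit to $a_n=n^2$ up front and diagonalize by a cardinality comparison. If completed, your argument would give a stronger and cleaner statement, and the counting comparison itself does survive (although your intermediate bound is off: with $L=n+n^2-2R$ you have $n\sim\sqrt{L}$, so $a^n\le(\log_2 L)^{\sqrt{L}}=2^{\sqrt{L}\log_2\log_2 L}$, not $(\log L)^{O(\log L)}$; fortunately this is still $2^{o(2^L)}$).

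However, there is a genuine gap, and it sits exactly where you place your ``central obstacle.'' The avoidance condition (iii) must hold for the language of the \emph{final} shift $B$, not for the stage-$L$ commitment: if later stages delete even one word of length $L$ from the committed set, the final $\mathcal{L}_L(B)$ could coincide with one of the targets you thought you had avoided. So each committed $\mathcal{L}_L(B)$ must be \emph{frozen} --- every committed word must survive all future restrictions --- and simultaneously you must retain $2^{\Omega(2^L)}$-fold freedom at every future level, with checks occurring at infinitely many (in fact rather densely spaced) levels $L=n+n^2-2R$. Your proposed invariant, that the gap between the committed set and the maximal consistent extension stays small, addresses neither the freezing requirement nor the preservation of doubly-exponential freedom after infinitely many frozen commitments; these two demands pull in opposite directions, and reconciling them is the entire content of the proof (it is precisely what the paper's marker words $u_r$ and the rigid two-word concatenation structure accomplish, at the cost of checking only along a sparse sequence of lengths and paying for it with an enormous, adaptively defined $(a_n)$). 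Until the inductive construction is actually carried out with these two properties verified, the argument is a plausible program rather than a proof.
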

\begin{proof}
We construct the shift $B$ recursively, constructing a nested sequence of subshifts and then defining $B$ to be their intersection.

Let $\mathcal{A}_0:=\{0,1\}$.  For each $t\in\N$, let $B_{0,t}$ denote the subshift of all possible bi-infinite concatenations of the words $0(1^{2t})$ and $0(1^{2t+1})$.  Note that 
\begin{equation}\label{eq:lang}
\mathcal{L}_{4t+3}(B_{0,t})\cap\mathcal{L}_{4t+3}(B_{0,t^{\prime}})=\emptyset  \quad\text{ for all }t<t^{\prime}, 
\end{equation}
since every element of $\mathcal{L}_{4t+3}$ contains either $01^{2t}0$ or $01^{2t+1}0$ as a subword whereas no element of $\mathcal{L}_{4t+3}(B_{0,t^{\prime}})$ does.  Observe that $\mathcal{L}_1(B_{0,t})=\{0,1\}$ for all $t\in\N$.  Find some $t_0>|\mathcal{Y}_{1,1,1}|$ such that 
$$ 
\mathcal{L}_{4|\mathcal{Y}_{1,1,1}|+3}(B_{0,t_0})\neq\mathcal{L}_{4|\mathcal{Y}_{1,1,1}|+3}(Y) \quad \text{ for all }Y\in\mathcal{Y}_{1,1,1}
$$ 
which is possible by~\eqref{eq:lang}.  Let $u_0\in\mathcal{L}(B_{0,t_0})$ be such that $u_0$ is a concatenation of the words $01^{2t_0}$ and $01^{2t_0+1}$ and every element of $\mathcal{L}_{4|\mathcal{Y}_{1,1,1}|+3}(B_{0,t_0})$ occurs as a subword of $u_0$.  We define two words $w_{0,0}$ and $w_{1,0}$ which play the role of the alphabet in the next stage of our construction, meaning that every element of the shift we construct at  this stage is a bi-infinite concatenation of these words.  
Set $w_{0,0}:=u_0(01^{2t_0})^{(2t_0+2)|u_0|(4|\mathcal{Y}_{1,1,1}|+3)}$ and 
 $w_{1,0}=u_0(01^{2t_0+1})^{(2t_0+1)|u_0|(4|\mathcal{Y}_{1,1,1}|+3)}$.  The words $w_{0,0}$ and $w_{1,0}$ have the following properties: 
	\begin{enumerate}
	\item $|w_{0,0}|=|w_{1,0}|$; 
	\item every element of $\mathcal{L}_{4|\mathcal{Y}_{1,1,1}|+3}(B_{0,t_0})$ occurs as a subword of $w_{0,0}$ and $w_{1,0}$; 
	\item any $\mathbb{Z}$-coloring that can be written as a bi-infinite concatenation of $w_{0,0}$ and $w_{1,0}$ can be written uniquely in this way (this is done by identifying where the word $u_0$ occurs); 
	\item for any $a,b\in\{0,1\}$, we have $w_{a,0}w_{b,0}\in\mathcal{L}(B_{0,t_0})$. 
	\end{enumerate} 
	
Inductively, suppose we have constructed words $w_{0,r}$ and $w_{1,r}$ which have the same lengths 
and the property that any $\mathbb{Z}$-coloring that can be written as a bi-infinite concatenation of $w_{0,r}$ and $w_{1,r}$ can be written uniquely in this way.  Further suppose we have constructed an integer $t_r$ and a subshift $B_{r,t_r}$ such that 
$$ 
\mathcal{L}_{4|\mathcal{Y}_{r,r,r}|+3}(B_{r,t_r})\neq\mathcal{L}_{4|\mathcal{Y}_{1,1,1}|+3}(Y) \quad \text{ for all }Y\in\mathcal{Y}_{r,r,r} 
$$ 
and that every element of $\mathcal{L}_{4|\mathcal{Y}_{r,r,r}|+3}(B_{r,t_r})$ occurs as a subword of both $w_{0,r}$ and $w_{1,r}$.  Also suppose $|w_{0,r}|, |w_{1,r}|>4|\mathcal{Y}_{r,r,r}|+3$.  Finally suppose that for any $a,b\in\{0,1\}$ we have $w_{a,r}w_{b,r}\in\mathcal{L}(B_{r,t_r})$.  We now mimic the construction of $B_{0,t_0}$ with 
$w_{0,r}$ and $w_{1,r}$  playing 
the role of an alphabet.  For each $t\in\N$, let $B_{r+1,t}$ be the subshift of all possible bi-infinite concatenations of the words $w_{0,r}(w_{1,r}^{2t})$ and $w_{0,r}(w_{1,r}^{2t+1})$.  Note that $\mathcal{L}_{4|\mathcal{Y}_{r,r,r}|+3}(B_{r+1,t})=\mathcal{L}_{4|\mathcal{Y}_{r,r,r}|+3}(B_{r,t_r})$ because every element of $\mathcal{L}_{4|\mathcal{Y}_{r,r,r}|+3}(B_{r,t_r})$ occurs as a subword of both $w_{0,r}$ and $w_{1,r}$.  Moreover, we have  that 
\begin{equation}\label{eq:longer_lang}
\mathcal{L}_{(4t+4)|w_{0,r}|}(B_{r+1,t})\cap\mathcal{L}_{(4t+4)|w_{0,r}|}(B_{r+1,t})=\emptyset \quad \text{ for all }t<t^{\prime} 
\end{equation}
because every element of $\mathcal{L}_{(4t+4)|w_{0,r}|}(B_{r+1,t})$ contains either the word $w_{0,r}w_{1,r}^{2t}w_{0,r}$ or the word $w_{0,r}w_{1,r}^{2t+1}w_{0,r}$ as a subword, 
whereas no element of $\mathcal{L}_{(4t+4)|w_{0,r}|}(B_{r+1,t^{\prime}})$ does (note the change from $4t+3$, used in the first stage of the construction, to $4t+4$ used now, which accounts for the fact that the words $w_{0,r}$ and $w_{1,r}$ can now be partially overlapped at the ends of words).  Choose  some $t_{r+1}>\max\{|\mathcal{Y}_{r+1,r+1,r+1}|,t_r\}$ such that 
$$ 
\mathcal{L}_{(4t+4)|w_{0,r}|}(B_{r+1,t})\neq\mathcal{L}_{(4t+4)|w_{0,r}|}(Y) \quad \text{ for any }Y\in\mathcal{Y}_{r+1,r+1,r+1},  
$$ 
which is possible by~\eqref{eq:longer_lang} and the fact that $\mathcal{Y}_{r+1,r+1,r+1}$ is finite.  Let $u_{r+1}\in\mathcal{L}(B_{r+1,t_{r+1}})$ be such that $u_{r+1}$ is a concatenation of the words $w_{0,r}(w_{1,r}^{2t})$ and $w_{0,r}(w_{1,r}^{2t+1})$ and every element of $\mathcal{L}_{(4|\mathcal{Y}_{r+1,r+1,r+1}|+4)|w_{0,r}|}(B_{r+1,t_{r+1}})$ occurs as a subword of $u_{r+1}$.  Define words 
	\begin{eqnarray*} 
	w_{0,r+1}&:=&u_{r+1}(w_{0,r}w_{1,r}^{2t_{r+1}})^{(2t_{r+1}+2)|u_{r+1}|(4|\mathcal{Y}_{r+1,r+1,r+1}|+4)} \\ 
	w_{1,r+1}&:=&u_{r+1}(w_{0,r}w_{1,r}^{1+2t_{r+1}})^{(2t_{r+1}+1)|u_{r+1}|(4|\mathcal{Y}_{r+1,r+1,r+1}|+4)} 
	\end{eqnarray*} 
These words have the following features: 
	\begin{enumerate}
	\item $|w_{0,r+1}|=|w_{1,r+1}|$; 
	\item every element of $\mathcal{L}_{(4|\mathcal{Y}_{r+1,r+1,r+1}|+4)|w_{0,r}|}(B_{r+1,t_{r+1}})$ occurs as a subword of $w_{0,r+1}$ and $w_{1,r+1}$; 
	\item any $\mathbb{Z}$-coloring that can be written as a bi-infinite concatenation of $w_{0,r+1}$ and $w_{1,r+1}$ can be written uniquely in this way, by identifying where the word $u_{r+1}$ occurs; 
	\item for any $a,b\in\{0,1\}$, we have $w_{a,r+1}w_{b,r+1}\in\mathcal{L}(B_{r+1,t_{r+1}})$. 
	\end{enumerate} 
Therefore this procedure inductively defines the shift $B_{r,t_r}$ for all $r\in\N$.  By construction, for any $r<r^{\prime}$ we have tgat 
$$ 
\mathcal{L}_{(4|\mathcal{Y}_{r,r,r}|+4)|w_{0,r}|}(B_{r,t_r})=\mathcal{L}_{(4|\mathcal{Y}_{r,r,r}|+4)|w_{0,r}|}(B_{r^{\prime},t_{r^{\prime}}}). 
$$ 
Furthermore, $B_{r+1,t_{r+1}}\subseteq B_{r,t_r}$ for all $r\in\N$.  Define the shift 
$$ 
B:=\bigcap_{r=1}^{\infty}B_{r,t_r}. 
$$ 
Then for any $r\in\N$, we have that 
\begin{equation}\label{eq:contra}
\mathcal{L}_{(4|\mathcal{Y}_{r,r,r}|+4)|w_{0,r}|}(B)\neq\mathcal{L}_{(4|\mathcal{Y}_{r,r,r}|+4)|w_{0,r}|}(Y) \quad \text{ for any }Y\in\mathcal{Y}_{r,r,r}. 
\end{equation}

We next show that $B$ is not a factor of any element of $\mathcal{Z}(a_n)$, where the
the sequence $(a_n)$ is defined by 
$$ 
a_n:=2\cdot(4|\mathcal{Y}_{n,n,n}|+4)|w_{0,n}|. 
$$ 
For contradiction, suppose $B$ is a factor of some shift $X\in\mathcal{Z}(a_n)$.  Let $\varphi\colon X\to B$ be a factor map and let $R$ be its (symmetric) range.  Find some $n>\max\{R,|\mathcal{L}_1(X)|\}$ such that $X$ has no minimal forbidden words of lengths between $n$ and $n+2(4|\mathcal{Y}_{n,n,n}|+4)|w_{0,n}|$.  If $X_n$ is the $n^{th}$ term in the SFT cover of $X$, then we have that  
$$ 
\mathcal{L}_{(4|\mathcal{Y}_{n,n,n}|+4)|w_{0,n}|}(B)=\mathcal{L}_{(4|\mathcal{Y}_{n,n,n}|+4)|w_{0,n}|}(\varphi(X_n)). 
$$ 
But $\varphi(X_n)\in\mathcal{Y}_{n,n,n}$ and so $\mathcal{L}_{(4|\mathcal{Y}_{n,n,n}|+4)|w_{0,n}|}(\varphi(X_n))$ is the language of an element of $\mathcal{Y}_{n,n,n}$, meaning 
that $\mathcal{L}_{(4|\mathcal{Y}_{n,n,n}|+4)|w_{0,n}|}(B)$ is also the language of an element of $\mathcal{Y}_{n,n,n}$.  
However, this contradicts Equation~\eqref{eq:contra}, and so no such shift $X$ or factor map $\varphi$ can exist.
\end{proof}

We note that the sequence  $(a_n)_{n\in\N}$ in Proposition \ref{prop:nonFactor} is computable. For instance, from the proof it follows that one may define this sequence as
$$ 
a_n:=2\cdot(4|\mathcal{Y}_{n,n,n}|+4)|w_{0,n}|, 
$$ 
where $|\mathcal{Y}_{n,n,n}|$ is the number of sofic shifts on an $n$ letter alphabet that can be written as range $n$ block codes of a shift of finite type, also on an 
$n$ letter alphabet (and thus is less than $2^{{n}^{2n+1}}$) and defined with minimal forbidden words of length at most $n$. The number $|w_{0,n}|$ is recursively defined and depends on $|\mathcal{Y}_{r,r,r}|$ for $r\leq n$.

We conclude this section by showing that $\mathcal{Z}(a_n)$ is a large set, in a sense made precise in the next proposition.  The result of the proposition, with $\mathcal{Z}(a_n)$ replaced with the set of all language stable shifts, appeared in~\cite[Corollary 5.2]{CK21} with essentially the same proof.  As the proof is short and needs a small amount of adapting to apply to $\mathcal{Z}(a_n)$, we include it here for completeness.

\begin{proposition}\label{prop:g-delta}
Let $(a_n)_{n\in\N}$ be a sequence of positive integers and fix $a\in\N$.  For any $0\leq h\leq\log(a)$, the set 
$$ 
T_h:=\{Y\in\mathcal{Z}(a_n)\colon\mathcal{L}_1(Y)\subseteq\{0,1,\dots,a-1\} \text{ and } h_{top}(Y)\geq h\} 
$$
is a dense $G_{\delta}$ subset, with respect to the Hausdorff metric, of the space of all subshifts of $\{0,1,\dots,a-1\}^{\Z}$ that have entropy at least $h$.
\end{proposition}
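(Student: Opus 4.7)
The plan is to prove the two Baire-category components separately: that $T_h$ is $G_{\delta}$, and that it is dense in the closed ambient space $\mathcal{K}_h := \{Y \subseteq \{0,1,\dots,a-1\}^{\Z} : h_{\ttp}(Y) \geq h\}$ with respect to the Hausdorff metric. The first part is a routine unpacking of the definitions. I would write $\mathcal{Z}(a_n) = \bigcap_{N=1}^{\infty} \bigcup_{n \geq N} U_n$, where $U_n$ denotes the set of subshifts with no minimal forbidden words of length in $[n, n+a_n]$. Whether a word $w$ is a minimal forbidden word of $X$ is determined by $\mathcal{L}_{|w|}(X)$, and there are only finitely many candidate words of length at most $n+a_n$, so each $U_n$ is clopen in the Hausdorff topology; hence $\bigcup_{n \geq N} U_n$ is open and $\mathcal{Z}(a_n)$ is $G_{\delta}$. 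For the ambient space, I would use that $X \mapsto |\mathcal{L}_n(X)|$ is locally constant for each fixed $n$ and that $h_{\ttp}(X) = \inf_n \tfrac{1}{n}\log |\mathcal{L}_n(X)|$ by Fekete's lemma applied to the submultiplicative sequence $|\mathcal{L}_n(X)|$, making entropy an infimum of continuous functions and hence upper semicontinuous; this gives that $\mathcal{K}_h$ is closed, so $T_h = \mathcal{K}_h \cap \mathcal{Z}(a_n)$ is $G_{\delta}$ in $\mathcal{K}_h$.

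The density argument rests on a single clean observation: every shift of finite type automatically belongs to $\mathcal{Z}(a_n)$. Indeed, if $X$ is defined by forbidden words of length at most $L$, then any minimal forbidden word of $X$ also has length at most $L$, since a longer word that is forbidden must strictly contain one of the defining forbidden words and hence cannot be minimal. So for every $n > L$ the set of minimal forbidden words of length in $[n, n+a_n]$ is empty, which is certainly infinitely many $n$. Now given an arbitrary $Y \in \mathcal{K}_h$, I would approximate $Y$ by the terms $Y_m$ of its SFT cover. Since $Y \subseteq Y_m$, we have $h_{\ttp}(Y_m) \geq h_{\ttp}(Y) \geq h$, so $Y_m \in \mathcal{K}_h$. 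Since $\mathcal{L}_k(Y_m) = \mathcal{L}_k(Y)$ for all $k \leq m$, we obtain $d_H(Y_m, Y) \to 0$ as $m \to \infty$. Each $Y_m$ is a shift of finite type, so $Y_m \in \mathcal{Z}(a_n)$ by the observation, and therefore $Y_m \in T_h$, witnessing density.

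There is no genuine obstacle here beyond routine bookkeeping; the only care needed is verifying that membership in $U_n$ depends only on $\mathcal{L}_{n+a_n}$, which is immediate from the definition of a minimal forbidden word. Note also that the argument makes no use of the specific growth rate of $(a_n)_{n\in\N}$, only of its positivity, so the density statement holds uniformly across all choices of sequence. This is why the proof is essentially the same as that in \cite[Corollary 5.2]{CK21}, and indeed the authors explicitly note that only small cosmetic changes are required.
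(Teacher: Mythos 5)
Your proof is correct and follows essentially the same route as the paper: density via the SFT cover (each term of which lies in $\mathcal{Z}(a_n)$ because its minimal forbidden words have bounded length) and the $G_{\delta}$ property by expressing membership in $\mathcal{Z}(a_n)$ as a countable intersection of sets whose membership is determined by the language up to length $n+a_n$, hence clopen. The only nitpick is your parenthetical justification that a long forbidden word ``must strictly contain one of the defining forbidden words'' --- the right statement is that it must contain some shorter forbidden proper subword (a gluing argument for $(L-1)$-step SFTs), which is what minimality actually requires --- but the conclusion you use is the standard fact the paper also invokes.
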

\begin{proof}
Fix $0\leq h\leq\log(a)$.  Let $S\subseteq\{0,1,\dots,a-1\}^{\Z}$ be a shift with entropy at least $h$.  Let $(S_k)_{k\in\N}$ be the SFT cover of $S$.  Note that $S_k\in\mathcal{Z}(a_n)$ for all $k$ since $S_k$ has only finitely many minimal forbidden words.  Moreover, $h_{top}(S_k)\geq h_{top}(S)$ for all $k$.  Finally, with respect to the Hausdorff metric, $d(S,S_k)\leq2^{-k}$ because $\mathcal{L}_k(S)=\mathcal{L}_k(S_k)$.  Therefore $S$ is a limit point of the set of all subshifts of $\{0,1,\dots,a-1\}^{\Z}$ that have entropy at least $h$.  It follows that $\overline{T_h}$ is the set of all subshifts of $\{0,1,\dots,a-1\}^{\Z}$ that have entropy at least $h$.

We are left with checking that $T_h$ is a $G_{\delta}$ set.  Note that if $T_0$ is a $G_{\delta}$ subset of the space of all subshifts of $\{0,1,\dots,a-1\}^{\Z}$, 
then $T_h$ is a $G_{\delta}$ subset of the space of all subshifts of $\{0,1,\dots,a-1\}^{\Z}$ that have entropy at least $h$.  Thus it suffices to prove the claim for $T_0$.  To do so, recall that $\mathcal{X}_{a,n}$ denotes the set of shifts of finite type on the alphabet $\{0,1,\dots,a-1\}^{\Z}$ that can be defined using forbidden words of length at most $n$.  For each $X\in\mathcal{X}_{a,n}$, let 
$$ 
\mathcal{U}(X):=\{Y\subseteq\{0,1,\dots,a-1\}^{\Z}\colon\mathcal{L}_{n+a_n}(Y)=\mathcal{L}_{n+a_n}(X)\}, 
$$ 
which is open with respect to the Hausdorff metric.  Then we have that 
$$ 
T_0=\bigcap_{n=1}^{\infty}\left(\bigcup_{m=n}^{\infty}\bigcup_{X\in\mathcal{X}_{a,m}}\mathcal{U}(X)\right)
$$ 
and so $T_0$ is a $G_{\delta}$ set.
\end{proof}

\subsection{Questions about well approximable shifts }
\label{sec:questions-on-WA}

It is natural to expect that there are some effective bounds for which language stable shifts are well approximable. 
\begin{question}
Are there effective bounds on the function $A_{(n,n,n,n)}$ from Definition~\ref{def:const}?
\end{question}

It is shown in~\cite{CK21} that every language stable shift has a characteristic measure that is a measure of maximal entropy, and in Theorem~\ref{th:char-periodic} we showed that the same holds for any symbolic factor of a  well-approximable language stable shifts.  However, we do not know how general this result is, and so we ask the following question.

\begin{question}
Does every subshift factor of a language stable shift have a characteristic measure of maximal entropy?
\end{question}
More generally, we can ask if the same holds for any shift, but we still do not know if every shift even has a characteristic measure. This brings us to a related question about construction of characteristic measures. 

\begin{question}
Assume that  $X$ is a  mixing subshift and let $\varphi\in\Aut(X)$ be an automorphism.  Let $(X_n)_{n\in\N}$ be the SFT cover of $X$ and let $\mu_n$ be the unique measure of maximal entropy on $X_n$.  If $\mu$ is a weak* limit point of $(\mu_n)_{n=1}^{\infty}$, is $\varphi_*\mu=\mu$?
\end{question}
Note that this question is particularly relevant for shifts that are not necessarily language stable.

\section{Largeness of the class of language stable shifts}
\label{sec:large-LSS}
\subsection{Language stable shifts are closed under bounded speedups}
While language stable shifts are not closed under passage to factors, they are closed under other operations.  In particular, we show that language stability is preserved by speeding up the transformation.

The classic speedup is given by taking a power of the transformation, for example $(X, \sigma^n)$ is a bounded speedup of $(X,\sigma)$ for any $n\in\N$.  We consider the more general setting and  
for a shift $(X, \sigma)$, define a {\em bounded speedup} is a self-homeomorphism $S$ of $X$ of the form 
$$ S(x) = \sigma^{\rho(x)}(x)$$
for some bounded function $\rho \colon X \to \N^*$.

The power map $\rho$ in a bounded speed in an aperiodic system must be continuous (see~\cite[Proposition 2.2]{AlvinDrewOrmes}), and it is known that bounded speedups preserve many properties of the initial system.  For example, a bounded speedup of a subshift is expansive, and hence is also a subshift, and similarly the bounded speedup of a substitution subshift is also a substitution (see~\cite{AlvinDrewOrmes} for these results and more background on such systems).  We give a result in this spirit in Theorem~\ref{th:speedup}: a bounded speedup of a language stable shift, and more generally of an induced map on a clopen set of a minimal language stable shift, is itself language stable.

We start with an elementary lemma used to bound the lengths of minimal forbidden words. 
\begin{lemma}\label{lem:useful}
Let $X$ be a shift. If $u,v,s$ are words such that $uv, vs\in\CL(X)$ but $uvs$ is a forbidden word in $X$, then the word  $uvs$ contains a minimal forbidden word of length at least $|v|$. 
\end{lemma}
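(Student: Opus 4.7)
The plan is to take any minimal forbidden word $w$ contained as a subword of $uvs$ (which must exist since $uvs$ is itself forbidden and $\mathcal{L}(X)=\CA^*\setminus\CA^*\CM(X)\CA^*$) and analyze where it sits relative to the decomposition $uvs$. Write $uvs = x_1 x_2 \cdots x_N$ where $N = |u|+|v|+|s|$, so that the positions $|u|+1,\ldots,|u|+|v|$ are precisely the letters of $v$. Say $w$ occupies positions $i$ through $j$.

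I would then rule out the two easy cases. If $j \leq |u|+|v|$, then $w$ is a subword of $uv\in\mathcal{L}(X)$, which contradicts $w\in\mathcal{M}(X)\subseteq\mathcal{F}(X)$. Symmetrically, if $i > |u|$, then $w$ is a subword of $vs\in\mathcal{L}(X)$, again a contradiction. So the only remaining possibility is that $i \leq |u|$ and $j \geq |u|+|v|+1$, which forces $w$ to contain the entire block $v$ plus at least one letter from $u$ on the left and one letter from $s$ on the right.

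From this it follows immediately that $|w| = j-i+1 \geq (|u|+|v|+1)-|u|+1 = |v|+2 \geq |v|$, which is the desired conclusion. There is no real obstacle here; the only thing one has to be slightly careful about is making sure that the argument handles the degenerate subcases (such as $w$ coinciding with $v$, or $w$ equaling $u_{|u|}v$, or $v s_1$) correctly, but these are all subsumed by the two "easy case" contradictions above since such $w$ would lie entirely inside $uv$ or $vs$.
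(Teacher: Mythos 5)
Your proof is correct and rests on the same key observation as the paper's: a forbidden subword of $uvs$ cannot lie entirely inside $uv$ or entirely inside $vs$, so it must contain the middle occurrence of $v$ together with at least one letter on each side. The paper packages this by taking a length-minimal forbidden subword of $uvs$ containing $v$ and concluding it is a minimal forbidden word, whereas you start from an arbitrary minimal forbidden subword (which exists since $\mathcal{L}(X)=\mathcal{A}^*\setminus\mathcal{A}^*\mathcal{M}(X)\mathcal{A}^*$) and show it must straddle $v$, even yielding the marginally stronger bound $|v|+2$; the two arguments are essentially interchangeable.
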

\begin{proof}
If $f$ is a subword of $uvs$ that is of minimal length and is forbidden in $X$ and contains $v$ as a subword, then $f$ is a minimal forbidden word. 
\end{proof}

\begin{theorem}\label{th:speedup}
Let $X \subset  A^\Z$ be a shift and let $\rho \colon  U \to \N^*$ be a   continuous  function such that the map  $S(\cdot) = \sigma^{\rho( \cdot)} (\cdot)$ is a homeomorphism on a clopen set $U\subset X$. 
Then  $(X, \sigma)$ is language stable if and only if   $(U, S)$ is. 
\end{theorem}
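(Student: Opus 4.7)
The plan is to realize $(U,S)$ as a subshift $\tilde U$ on a finite alphabet through the standard return-time coding and then to establish a length-controlled correspondence between minimal forbidden words of $X$ and of $\tilde U$, from which the equivalence of language stability will follow. Since $U$ is clopen and $\rho$ is continuous on $U$, there exists $R\in\N$ such that both $\one_U$ and $\rho$ depend only on $x_{[-R,R]}$; let $m:=\min\rho$ and $M:=\max\rho$. For $x\in U$, set $\tau_0(x)=0$ and $\tau_{n+1}(x)=\tau_n(x)+\rho(S^n x)$, and define the recoding $\pi(x)_n:=x_{[\tau_n(x),\tau_{n+1}(x))}$, a block over $\CA$ of length in $[m,M]$. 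Letting $B$ be the (finite) set of realized blocks, $\pi\colon U\to B^\Z$ is a continuous injection intertwining $S$ with $\sigma$, and $\tilde U:=\pi(U)$ is a subshift conjugate to $(U,S)$. Write $\Pi\colon B^*\to\CA^*$ for concatenation.

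The key step is to show that for sufficiently large $k\in LS(\tilde U)$ there exists $\ell\in LS(X)\cap[(k-2)m,\,kM+O(R)]$, and conversely for sufficiently large $\ell\in LS(X)$ there exists $k\in LS(\tilde U)\cap[\ell/M-O(1),\,\ell/m+O(1)]$. For the first direction, take $W=y_1\cdots y_k\in\CM_k(\tilde U)$ with $k\geq 3$; minimality gives $W^-=y_1\cdots y_{k-1}$ and $W^+=y_2\cdots y_k$ in $\CL(\tilde U)$, whence $\Pi(W^\pm)\in\CL(X)$. If $\Pi(W)\notin\CL(X)$, Lemma~\ref{lem:useful} applied to the decomposition $\Pi(W)=y_1\cdot\Pi(y_2\cdots y_{k-1})\cdot y_k$ (central factor of length at least $(k-2)m$) produces a minimal forbidden word of $X$ of length in $[(k-2)m,\,kM]$. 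Otherwise $\Pi(W)\in\CL(X)$, and the obstruction to $W\in\CL(\tilde U)$ must come from the failure of the local (radius $R$) block-boundary condition at some offset $\tau_i$; since $W^-,W^+$ jointly witness validity at every interior offset, the defect is confined to within $R$ of an endpoint, and extending $\Pi(W)$ by at most $R$ letters on that side produces a word outside $\CL(X)$, after which a second application of Lemma~\ref{lem:useful} extracts a minimal forbidden word of $X$ of length within $O(R)$ of the previous range. The reverse direction is symmetric: a minimal forbidden word $w\in\CM_\ell(X)$ is embedded, after appending at most $M-1$ letters on each side, into a $B$-word whose concatenation lies outside $\CL(X)$, and passing to a minimal such $B$-word yields the required element of $LS(\tilde U)$.

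Combining both correspondences with the equivalent characterization of language stability as the existence of arbitrarily long gaps in $LS$ yields that $X$ is language stable if and only if $\tilde U$ is, i.e., if and only if $(U,S)$ is. The main obstacle is the ``block-alignment'' case above, when $\Pi(W)\in\CL(X)$ but $W\notin\CL(\tilde U)$: extracting a genuine minimal forbidden word of $X$ from a block-boundary failure requires carefully exploiting that $W^\pm\in\CL(\tilde U)$ already certifies every interior boundary condition, so that the defect is localized in a bounded neighborhood of an endpoint, where Lemma~\ref{lem:useful} can be applied on a slight extension of $\Pi(W)$.
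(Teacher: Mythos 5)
Your overall strategy -- recode $(U,S)$ as an honest subshift and use Lemma~\ref{lem:useful} to transport minimal forbidden words with length control in both directions -- is the same as the paper's. The codings differ: you use the non-overlapping return-block coding $\pi(x)_n=x_{[\tau_n(x),\tau_{n+1}(x))}$, while the paper uses the overlapping sliding-window coding $x\mapsto (x_{\rho(n,x)+[-N,N]})_{n\in\Z}$; either can be made to work (the sliding window has the advantage that the local conditions ``$x\in U$'' and ``$\rho(S^ix)=|y_{i+1}|$'' are already encoded in the symbols, which is precisely what makes your Case~2 delicate). Your forward direction is essentially the paper's argument, except that in the block-alignment case the correct move is to glue the specific left extension furnished by a witness of $W^-$ and the specific right extension furnished by a witness of $W^+$ (possibly on \emph{both} sides at once, not ``on that side''), and to argue that the glued word must be forbidden because otherwise every boundary condition for $W$ would be satisfied; this is fixable but not what you wrote.

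There are two genuine gaps. First, the reverse direction: you cannot ``pass to a minimal such $B$-word,'' because a forbidden word of $\tilde U$ contains a minimal forbidden word with no a priori lower bound on its length, so the claimed estimate $k\geq \ell/M-O(1)$ is unproved. Moreover, since $w\in\CM_\ell(X)$ is forbidden, it occurs in no point of $X$ and therefore carries no ambient block decomposition; one must instead work with its allowed proper subwords. This is exactly where the paper works hardest: it extracts the longest block-aligned subword $x^*_{I_{0,n_\ell}(x^*)}$ of $w$, extends to $\tu^- w\ts^+$, and verifies \emph{directly} that the resulting $B$-word $\tf_{-1}\cdots\tf_{n_\ell+1}$ is minimal forbidden by checking that both $\tf_{-1}\cdots\tf_{n_\ell}$ and $\tf_0\cdots\tf_{n_\ell+1}$ are allowed -- i.e., it reproves the hypothesis of Lemma~\ref{lem:useful} one level up rather than invoking minimality for free. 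Second, your concluding step fails when $m=\min\rho<\max\rho=M$: knowing only that each $k\in LS(\tilde U)$ yields some $\ell\in LS(X)\cap[(k-2)m,\,kM+O(R)]$ does not transfer ``arbitrarily long gaps,'' since that window has length of order $k(M-m)$, so a gap $[n+1,n+G]$ in $LS(X)$ forces a gap in $LS(\tilde U)$ only when $G\gtrsim n(M/m-1)$, which language stability does not supply (gaps of length $G$ may occur only at positions $n\gg G$). What is needed is additive control -- e.g., that the produced $\ell$ lies in a window of bounded length $2M+2R$ around the physical length $|\Pi(W)|$, combined with a matching additive bound in the reverse direction -- which is the role of the paper's estimate $f(\ell_2)-f(\ell_1)\leq(\ell_2-\ell_1+2)\max\rho$ on consecutive images; your proposal neither states nor uses any such bound.
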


 It follows that for a minimal language stable shift, any induced map $ \tau(\cdot) =  \sigma^{\inf \{n >0\colon  \sigma^n(\cdot) \in U \}} (\cdot)$ on a clopen set $U$ is language stable.  Furthermore, using an argument similar to that in~\cite[Proposition 2.2] {AlvinDrewOrmes}, we can relax the hypothesis of continuity on $\rho$ to only require that  it is bounded on an aperiodic subshift. 

\begin{proof}
To simplify notation, we make a small abuse of notation and let $\rho \colon \Z \times U \to \Z$ denote the cocycle associated to $\rho$, meaning that 
\[
\rho(n, x) = \begin{cases}  \sum_{i=0}^{n-1} \rho(S^i x) & \textrm{ for } n>0\\
0 & \textrm{ for } n=0\\
 -\sum_{i=0}^{-n-1} \rho(S^{i+n} x)& \textrm{ for } n<0.
\end{cases} 
\]
Hence for any integer $n$ and $x\in X$, we have  that $S^n x= \sigma^{\rho(n,x)}(x)$. 

Since $\rho$ is a continuous function on $U\subset X$, it is locally constant. Choose an integer  $N> \sup_{x\in U} |\rho(x)|$ such that $\rho$ is constant on any cylinder defined by a word of length $N$. 
To simplify notation, for the remainder of this proof we  let $J(N)$ denote the interval $[-N,N]$ and 
for $q\in\Z$ we write $q+J(N)$ for the interval $[-N+q, N+q]$, and use analogous notation for translates of intervals $I, J$.
We claim that for sufficiently large $N$, the map from $\phi\colon U\to (A^{2N+1})^\Z$ given by
\begin{equation}
    \label{def:phi}
\phi\bigl((x_n)_{n\in\Z}\bigr) = (x_{\rho(n, x) + J(N)})_{n \in \Z}
\end{equation}
defines a conjugacy between the induced system $(U,S)$ and the shift $(X, \sigma)$. 
To check this, note that since the sequence  $(\rho(n,x))_{n \in \Z}$ is increasing and the distance between two consecutive terms is uniformly bounded, the map is a homeomorphism for any sufficiently large $N$.  Furthermore, a  direct computation shows that the map $\phi$ intertwines the action of $S(\cdot) = \sigma^{\rho(\cdot)} (\cdot)$ and the shift map.
Let $(Y,\sigma)$ denote the image of the shift $(X,\sigma)$ under $\phi$.

Assume first that  $(X,\sigma)$ is language stable. Let  $\tf=  \tf_0 \dots \tf_{\ell-1}$ be a minimal forbidden word for $Y$ of  length $\ell$, where $\ell$ satisfies
$\inf_{x \in U}  \rho(\ell, x)  > 2N$. This means that there exist $x, y\in U$ such that  
$$
\begin{aligned}
x_{J(N)} = \tf_0&  & &  & y_{J(N)} = \tf_1 \\ 
x_{\rho(x)+ J(N)} = \tf_1 & & & & y_{\rho(y)+ J(N)} = \tf_2 \\
  \vdots \ \ \  \ &   & & &  \vdots  \ \ \ \ \\
x_{\rho(\ell-2, x)+ J(N)} = \tf_{\ell-2} &  & & & y_{\rho(\ell-2, y)+ J(N)} = \tf_{\ell-1}\\
x_{\rho(\ell-1, x)+ J(N)} \neq \tf_{\ell-1} & & & &  y_{\rho(-1, y)+ J(N)} \neq \tf_{0}.
\end{aligned}
$$

For $i =0,1$ and  $j=\ell-1, \ell-2, \ell-3$, let $I_{i,j}(x) $ denote the interval  $I_{i,j}(x)  = \bigcup_{k=i}^{j} \rho(k,x) +J(N)$, 
and define the interval $I_{i,j}(y) $ analogously.
Note that  $\rho(i, x) = \rho(i-1,y) +\rho(x)$ for each $0<i < \ell-1$ and  $x_{I_{1,\ell-2}(x)}  = y_{I_{0,\ell-3}(y)}$. Moreover the interval $J(N)$  is not a subset of  $ I_{1,\ell-2}(x)$, because $\rho$ takes only positive values. 
Similarly the interval $\rho(\ell-1, x)+J(N)$  is not a subset of  $ I_{1, \ell-2}(x)$.
Let $I$ denote the interval $J(N) \setminus I_{1,\ell-2}(x),$ and let $J$ denote the interval $ (\rho(\ell-1, x)+J(N)) \setminus  I_{1,\ell-2}(x)$. 
Since $\ell$ is sufficiently large, the intervals $I$ and $J$ are disjoint.

Choose $w$ to be the word such that
$$ 
\begin{aligned}
w_I &= x_I, \\
w_{I_{1,\ell-2}(x)} &= x_{I_{1,\ell-2}(x)},\\
w_J  &= y_{J+ \rho(x) }. 
\end{aligned}
$$
By construction, we have that $w_{\rho(i,x) + [-N,N]} = \tf_i$ for each $0 \le i < \ell$. Since $\tf$ is forbidden in $Y$,  the word $w$ is forbidden in $X$. It follows from   Lemma~\ref{lem:useful} that $w$ contains a minimal forbidden word  for $X$ of length $f(\ell)$  between 
$|I_{1,\ell-2}(x)| = 2N + \rho(\ell- 2,x) -  \rho(x) +1$ 
and $|I_{0,\ell-1}(x)| = 2N +  \rho(\ell-1,x)  +1$.  
Thus  for two distinct lengths $\ell_1 <  \ell_2$ of minimal forbidden words, we have that 
$$\begin{aligned}
    f(\ell_2)- f(\ell_1)  &\le \rho(\ell_2-1, x)  - (\rho(\ell_1-2, x) - \rho(x) ) \\ 
                        &= \sum_{i=\ell_1 -2}^{\ell_2-2} \rho(S^i x)  + \rho(x)
                        \le (\ell_2  - \ell_1 +2) \max_{x\in U} \rho(x).  
 \end{aligned}
$$

It follows that when there are arbitrary large gaps between consecutive lengths of minimal forbidden words in $X$, the same holds for $Y$. In particular, if $(X, \sigma)$ is language stable then so is $(Y, \sigma)$, and thus so is the induced map $(U, S)$.

Conversely, assume that $(U,S)$ is language stable and thus so is the system $(Y, \sigma)$, where again $(Y, \sigma)$ is the system defined to be the image of the $(X, \sigma)$ under the map $\phi$ defined in~\eqref{def:phi}. 
Let $w$ be a minimal forbidden word for $X$ of length $\ell$, with this length  to be determined.
For $x\in U$  and integers $i<j$, let $I_{i,j}(x)$ denote the interval  $\bigcup_{k=i}^{j} \rho(k,x) +J(N)$. Define $n_\ell$  to be the integer 
$$ n_\ell = \sup\{n \ge 0\colon  \text{ there exists }  x \in U \text{ such that }  x_{I_{0, n}(x)} \textrm{ is a subword of } w  \}.
$$

By definition, for any $\ell > 2N+1$, the set in this definition is nonempty.
Let $x^*\in U$  be some point associated to $n_\ell$. 
As any subword of $x^*$ has shorter length, it follows that 
\begin{align}\label{eq:encadre}
2N+1 + n_\ell \min_{x \in U} \rho(x)   \le |I_{0,n_\ell}(x^*)|    \le \ell-1.
\end{align}
Since $n_\ell$ is maximal, it also follows that 
\begin{align}\label{eq:nell}
 \ell  <  (n_\ell +1)  \max_{x \in U} \rho(x) +2N+1. 
\end{align}

Since $n_\ell$ is maximal, neither  $x^*_{I_{-1, n_\ell}(x^*)}$ or  $x^*_{I_{0, n_{\ell}+1 }(x^*)}$ can be a subword of $w$. It follows from  inequalities~\eqref{eq:encadre} and~\eqref{eq:nell} that for any sufficiently large $\ell$, the intervals $I_{-1, n_\ell}(x^*) \setminus I_{0, n_\ell}(x^*)$   and  $I_{0, n_\ell+1}(x^*) \setminus I_{0, n_\ell}(x^*)$ are disjoint.

 Let $\tu, \ts \in A^*$ be two words 
 such that  $w = \tu x^*_{I_{0, n_\ell}(x^*)}\ts$.
The length of $I_{0,n_\ell +1}(x^*)$ is greater than $|x^*_{I_{0, n_\ell}(x^*)}\ts |$, as otherwise  we can find $x \in U$ such that $x_{I_{0, n_\ell +1}(x)}$ is a subword of  $w$, a contradiction of the maximality of $n_\ell$. Similarly the length of $I_{-1,n_\ell}(x^*)$ is greater than $|\tu x^*_{I_{0, n_\ell}(x^*)}|$.

 Since $w$ is forbidden,  the words $\ts$ and  $\tu$ can not be both empty. Since the words $\tu x^*_{I_{0, n_\ell}(x^*)}$ and $x^*_{I_{0, n_\ell}(x^*)}\ts $ 
 are allowable words in the language of $X$, there exist words   $\tu^-$ and $\ts^+$ in $A^*$ such that the words $\tu^- \tu x^*_{I_{0, n_\ell}(x^*)}$ and $x^*_{I_{0, n_\ell}(x^*)}\ts \ts^+$ are allowed in $X$ and 
 $|\tu^- w \ts^+|= |I_{-1, n_\ell+1}(x^*)|$. Finally set  $w_s$ (respectively, $w_u$) to be the suffix (respectively, prefix) of length $2N+1$ of the word  $\tu^- w \ts^+$.
Let  $\tf = \tf_{-1} \cdots \tf_{n_\ell +1}$ be the word  in $(A^{2N+1})^{n_\ell +3}$ defined by 
$$ 
\begin{aligned}
& \tf_i = x^*_{\rho(i,x^*) + J(N)} & \textrm{ for } 0 \le i \le  n_\ell,\\
& \tf_{n_\ell+1} = w_s,  & \textrm{ and }  \tf_{-1} = w_u.
\end{aligned}
$$
This defines a forbidden word for $Y$, as otherwise  the word $\tu^- w \ts^+$ occurs $X$. By construction, the word  $\tf_0 \dots  \tf_{n_\ell}$ is allowed  in $Y$.  Because the words $\tu^- \tu x^*_{I_{0, n_\ell}(x^*)}$ and   $ x^*_{I_{0, n_\ell}(x^*)} \ts \ts^+$ are allowed in $X$, it is also straightforward to check that the words  $\tf_{-1} \cdots \tf_{n_\ell}$ and $\tf_0 \cdots \tf_{n_\ell+1}$ are allowed in $U$. 
This means that  $\tf$ is  a minimal forbidden word of $Y$ with length  $g(\ell)= n_\ell+3$.

By Inequalities~\eqref{eq:encadre} and~\eqref{eq:nell}, as in the first part of the proof, there are arbitrary gaps between consecutive lengths of minimal forbidden words of $X$ when this holds for $Y$, and language stability follows. 
\end{proof}

\subsection{Many language stable shifts are $\beta$-shifts}
We describe how the class of language subshifts interacts with a well-known family parameterized by the reals,  the $\beta$-shifts. 
We start with a brief summary of the properties of $\beta$-shifts, and
refer to the survey~\cite{Blanchard} for further background. 

 Let $\beta>1$ be real number. Set 
 $$ d(1,\beta) = (\lfloor \beta T_\beta^{n-1}(1)\rfloor)_{n\ge 1}, \quad \textrm{ where } T_\beta(x) = \beta x -\lfloor\beta x\rfloor.$$ 
When  the expansion $d(1, \beta)$ is {\em finite}, meaning that   $d(1,\beta) =d_{0} \dots d_{k} 0 0 \dots$ for some finitely many nonzero integers $d_0, \dots, d_k$,  set  $$d^{*}(1, \beta) = d_{0} \cdots d_{k-1} (d_{k}-1)d_{0} \cdots d_{k-1} (d_{k}-1) \cdots$$
to be the periodic expansion.  For the remainder of this section, given any finite expansion we  consider the associated infinite periodic one $d^{*}(1,\beta)$ instead of $d(1,\beta)$. 
Note that $d(1, \beta)$ starts with  $\lfloor\beta\rfloor$, the greatest letter of the alphabet.  

Letting $<_{\textrm{lex}}$ be the lexicographic ordering, 
the expansions $d(1, \beta)$ are characterized in  the following way:
\begin{proposition}[see, for example, \cite{Blanchard}]\label{prop:carExpansion}
A sequence $d \in \{0, \ldots, \lfloor\beta\rfloor \}^{\N}$ is the expansion of some number $\beta>1$ if and only if
$$ \sigma^{s} d <_{\textrm{lex}} d, \quad \text{ for all } s\ge 1.$$  
\end{proposition}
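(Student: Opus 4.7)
The approach is to prove each direction independently, using the greedy $\beta$-expansion as the central tool. For the forward direction, the greedy expansion is strictly lex-monotone in the number being expanded; for the reverse, we construct $\beta$ from the series equation $\sum_{n\geq 1} d_n \beta^{-n}=1$ and identify $d$ with the greedy expansion of $1$ at this base.

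For the forward direction $(\Rightarrow)$, I would first prove the monotonicity lemma: if $0\leq x<y\leq 1$ then the greedy $\beta$-expansions satisfy $\mathrm{greedy}(x) <_{\textrm{lex}} \mathrm{greedy}(y)$. The argument is by induction on the first coordinate of disagreement, since $\lfloor\beta x\rfloor \leq \lfloor\beta y\rfloor$ and equality forces $T_\beta(x)<T_\beta(y)$, letting the induction continue; termination is ensured by injectivity of the greedy map on $[0,1)$ (the partial sums reconstruct the original number). Applied with $x:=T_\beta^s(1)<1$ and $y:=1$, this immediately gives $\sigma^s d = \mathrm{greedy}(T_\beta^s(1)) <_{\textrm{lex}} d$ for infinite expansions. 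The finite expansion case, where we pass to the periodic completion $d^*$, is verified by direct case analysis on $s \bmod k$ using the polynomial identity $\beta^k = d_1\beta^{k-1}+\cdots+d_k$ that characterizes $\beta$ in this setting.

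For the reverse direction $(\Leftarrow)$, observe that the lex condition at shift $s=n-1$ forces $d_n\leq d_1$ for every $n$, so $f(\gamma):=\sum_{n\geq 1}d_n \gamma^{-n}$ is continuous and strictly decreasing on $(1,\infty)$. One checks that $f(d_1)\geq 1$ (the first term alone equals $1$) while $f(d_1+1)\leq 1$ (geometric bound, with equality only when $d$ is constant, a case excluded by the strict lex condition $\sigma d <_{\textrm{lex}} d$), so there is a unique $\beta\in[d_1,d_1+1)$ with $f(\beta)=1$, giving $\lfloor\beta\rfloor=d_1$. I would then identify $d=d(1,\beta)$ inductively by showing $y_s:=\sum_{m\geq 1}d_{m+s}\beta^{-m}\in[0,1)$ for all $s\geq 1$: combined with the algebraic recurrence $y_s=\beta y_{s-1}-d_s$ (which follows directly from $f(\beta)=1$), this identifies $d_s$ with $\lfloor \beta T_\beta^{s-1}(1)\rfloor$.

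The main obstacle is proving the strict upper bound $y_s<1$. A naive single-shift argument fails: when $\beta<d_1+1$ the tail bound $d_1\beta^{-k}/(\beta-1)$ at the first disagreement $k$ exceeds the leading gap $\beta^{-k}$, so the strict lex inequality at shift $s$ alone does not translate into a strict numerical inequality between $y_s$ and $1$. The argument must instead exploit the full family $\{\sigma^t d <_{\textrm{lex}} d\}_{t\geq 1}$ simultaneously. One clean way is to compare $d$ with the greedy expansion $g:=d(1,\beta)$: by the already-proved forward direction, $g$ satisfies the same lex condition, and being the lex-maximum valid $\beta$-representation of $1$, it dominates $d$ lexicographically. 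A strict inequality $g >_{\textrm{lex}} d$ with first disagreement at position $k$ would imply $y_k \geq 1$ and propagate, through the shift conditions on both $d$ and $g$, to an arithmetic contradiction, forcing $d=g$. In the finite case, $d$ is recovered as the periodic completion $d^*$ in the convention of the paper.
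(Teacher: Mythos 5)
A preliminary remark: the paper does not prove this proposition --- it is Parry's classical characterization of $\beta$-expansions, stated with a citation to Blanchard's survey --- so there is no in-paper argument to compare against, and I am judging your proposal on its own merits. Your outline is the standard proof of Parry's theorem, and the forward direction is essentially complete: monotonicity of the greedy map together with injectivity (the digits recover the number via $x=\sum_n d_n\beta^{-n}$) gives $\sigma^s d=\mathrm{greedy}(T_\beta^s(1))<_{\textrm{lex}}\mathrm{greedy}(1)=d$ for all $s\ge 1$. One caveat: your proposed ``direct case analysis'' for the periodic completion $d^*$ cannot succeed as stated, because $d^*$ is periodic and $\sigma^{k+1}d^*=d^*$, which violates the strict inequality at multiples of the period. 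The proposition is correctly a statement about the greedy expansion $d(1,\beta)$ itself (whose finite case, padded with zeros, does satisfy the strict condition); the paper's convention of silently replacing $d(1,\beta)$ by $d^*(1,\beta)$ should not be imported into this particular statement, and your attempt to verify it for $d^*$ is attempting to verify something false.

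The genuine gap is in the reverse direction, and you have located it exactly where it lives: proving $y_s<1$ for all $s\ge 1$. Your reduction --- $g:=d(1,\beta)$ is lex-maximal among representations of $1$ in base $\beta$, hence $g\ge_{\textrm{lex}}d$, and a strict inequality at first disagreement $k$ forces $y_k\ge 1$ --- is correct, but the promised ``propagation to an arithmetic contradiction'' is precisely the hard content of the theorem and is not supplied. What is needed is an explicit iteration of the following type: if $y_s\ge 1$ and $j\ge 1$ is the first index where $\sigma^s d$ and $d$ disagree (so $d_{s+j}\le d_j-1$), then comparing $y_s=\sum_{i=1}^{j}d_{s+i}\beta^{-i}+\beta^{-j}y_{s+j}$ with $1=\sum_{i=1}^{j}d_i\beta^{-i}+\beta^{-j}y_j$ yields $y_{s+j}-1\ge\beta^{j}(y_s-1)+y_j$. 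If ever $y_s>1$ strictly, this forces $y$ to grow without bound along a subsequence, contradicting the uniform bound $y_t\le d_1/(\beta-1)$; if $y_s=1$ exactly, then either $y_j>0$ (which pushes the next step into the strict case) or $y_j=0$, which means $d$ has finite support while the strictly increasing chain of indices with $y\ge 1$ must eventually leave that support. Without some version of this bookkeeping the proof does not close; the remainder of your reverse direction (existence and uniqueness of $\beta$ with $f(\beta)=1$, the identification $\lfloor\beta\rfloor=d_1$, and the recurrence $y_s=\beta y_{s-1}-d_s$) is fine.
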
 
The associated subshift $S_{\beta}$, called a $\beta$-{\em shift},  is defined by
$$S_{\beta} = \{ (s_{n})_{n \in \Z} \in\{0, \ldots, \lfloor\beta\rfloor \}^{\Z}: s_{n} \cdots s_{m} \le_{\textrm{lex}} d(1,\beta) \quad  \text{ for all } n < m \}.$$   
Another equivalent way to define this subshift  is by taking bi-infinite edge paths
in the graph whose edges are labeled by the integers $0, 1, 2, \ldots$, with an edge from the vertex $i$ to $i+1$  labelled by $d_{i}$ and when $d_{i}>0$, there are edges between the vertices $i$ to $0$ labelled by $d_{i}-1$, $\ldots$, $0$. 

The arithmetic properties of the real $\beta$ and the dynamical properties of the shift $S_{\beta}$ are connected.  For example, the class of $\beta$ such that  $S_{\beta}$ is sofic contains  all Pisot numbers and is a subset of the Perron numbers.  Surprisingly,  $S_{\beta}$ is a shift of finite type for  $(1+ \sqrt{5})/2$ but not for $(3+ \sqrt{5})/2$. 
Furthermore, the shifts $S_{\beta}$ depend continuously on the real parameter $\beta$ in the Hausdorff topology. It follows that 
for a generic set of parameters $\beta$, the subshift $S_\beta$ is language stable, 
We give a more precise characterization of which $\beta$-shifts are language stable. To do so, we consider the combinatorial properties of the associated expansion sequences, making use of a characterization of the associated language.
\begin{proposition}[see, for example,  \cite{Blanchard}]\label{prop:codedsystem} The language of a $\beta$-shift $S_{\beta}$ is defined by concatenations of all words in the set 
$$Y= \{wb: w \textrm{ is a prefix of } d(1,\beta) , b < d_{|w|}     \}.$$  \end{proposition}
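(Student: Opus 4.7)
The plan is to prove both containments: that every subword of a finite concatenation of elements of $Y$ lies in $\mathcal{L}(S_\beta)$, and that every element of $\mathcal{L}(S_\beta)$ arises as such a subword. The central tool throughout is the lexicographic defining condition of $S_\beta$ together with Proposition~\ref{prop:carExpansion}, which gives $\sigma^s d(1,\beta) <_{\textrm{lex}} d(1,\beta)$ for every $s \geq 1$.

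For the forward direction, I first check that every single word $wb \in Y$ lies in $\mathcal{L}(S_\beta)$. The comparison of $wb$ itself to $d(1,\beta)$ is resolved by agreement on $w$ followed by the strict inequality $b < d_{|w|}$, and any proper suffix beginning strictly inside $w$ reduces to comparing a shift $\sigma^s d(1,\beta)$ against $d(1,\beta)$, which is handled by Proposition~\ref{prop:carExpansion}. Extending to a concatenation $v_1 \cdots v_m$ with $v_j = w_j b_j \in Y$, any suffix either starts at some separator letter $b_j$—which is strictly less than $d_0 = \lfloor \beta \rfloor$ and settles the lex comparison at the first letter—or starts strictly inside some $w_j$, in which case the strict inequality against $d(1,\beta)$ is already achieved before the end of $w_j$ exactly as in the single-block case, making the continuation past $w_j$ irrelevant. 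Passing to subwords preserves membership in $\mathcal{L}(S_\beta)$.

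For the reverse direction, I would use a greedy left-to-right parsing of $u = u_1 \cdots u_n \in \mathcal{L}(S_\beta)$. Let $w$ be the longest prefix of $u$ that is also a prefix of $d(1,\beta)$. If $|w| < n$, then the defining lex condition applied to $u$ forces $u_{|w|+1} \leq d_{|w|}$, and maximality of $w$ upgrades this to the strict inequality $u_{|w|+1} < d_{|w|}$, so $u_1 \cdots u_{|w|+1} \in Y$; the remaining suffix lies again in $\mathcal{L}(S_\beta)$ and the argument recurses. If $|w| = n$, then $u$ is itself a prefix of $d(1,\beta)$ and can be completed on the right into an element of $Y$ by appending any letter $b < d_{|u|}$; concatenating this element with the parsing of the preceding block exhibits $u$ as a subword of a concatenation of words in $Y$.

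The main obstacle is the edge case in which $d(1,\beta)$ is a finite expansion, where the tail of $d$ is eventually zero and the step ``append $b < d_{|u|}$'' can fail. The standard fix, already encoded in the paper's convention, is to systematically replace $d(1,\beta)$ by its periodic completion $d^*(1,\beta)$ in all the arguments above; the lexicographic characterization of $S_\beta$ is unchanged under this replacement, all prefixes of $d^*(1,\beta)$ remain admissible, and the greedy parsing terminates cleanly with nonempty blocks, combining to yield the claimed description of $\mathcal{L}(S_\beta)$.
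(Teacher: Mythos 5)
The paper offers no proof of this proposition --- it is imported from Blanchard's survey \cite{Blanchard} --- so there is nothing internal to compare your argument against. What you give is the standard ``coded system'' argument for $\beta$-shifts, and it is essentially correct; only two steps deserve a more careful write-up. In the forward direction, for a suffix of $wb$ beginning at position $s$ inside $w$, the reduction to Proposition~\ref{prop:carExpansion} is really the two-step chain
$$d_s\cdots d_{|w|-1}b \;<_{\textrm{lex}}\; d_s\cdots d_{|w|} \;\le_{\textrm{lex}}\; d_0\cdots d_{|w|-s},$$
where the first inequality uses $b<d_{|w|}$ and the second is the truncation of $\sigma^s d<_{\textrm{lex}} d$ to length $|w|-s+1$; this matters because when $\sigma^s d$ and $d$ agree through position $|w|-s-1$ the comparison is decided by $b$ against $d_{|w|-s}$ rather than against $d_{|w|}$, so ``handled by Proposition~\ref{prop:carExpansion}'' alone does not close that case. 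In the reverse direction, the terminal completion ``append any $b<d_{|u|}$'' fails whenever $d_{|u|}=0$, which occurs at individual positions even for expansions that are not finite (for the golden mean, $d^*(1,\beta)=1010\cdots$ and $u=1$ admits no such $b$); the correct completion extends $u$ along $d^*(1,\beta)$ to the least index $k\ge|u|$ with $d_k>0$ and appends a letter smaller than $d_k$, and such a $k$ always exists because the convention of passing to $d^*(1,\beta)$ excludes eventually-zero tails. With these two repairs your proof is complete.
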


We make use of this to describe the minimal forbidden words of a $\beta$-shift. 
\begin{lemma}\label{lem:CarMinimalWordBetaShift} For the $\beta$-shift $S_\beta$, the set of the minimal forbidden words  $\CM(S_{\beta})$
is the set 
$$  \{ wb:  w \textrm{ is a prefix of } d(1,\beta),  b > d_{|w|}, \text{ for any strict suffix } w' \textrm{ of } w, w'b\le d(1,\beta)  \}.$$  
\end{lemma}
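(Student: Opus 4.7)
The plan is to work from the following equivalent description of the language of $S_\beta$: a finite word $u$ lies in $\mathcal{L}(S_\beta)$ if and only if every suffix of $u$ is $\le_{\textrm{lex}} d(1,\beta)$. This is immediate from unpacking the definition of $S_\beta$ together with Proposition~\ref{prop:carExpansion}, since any suffix of a window in an element of $S_\beta$ satisfies the lex inequality, and conversely a word all of whose suffixes are $\le_{\textrm{lex}} d(1,\beta)$ can be extended on both sides to an element of $S_\beta$ (for instance by appending the appropriate continuation of $d(1,\beta)$ on the right).

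For the inclusion $\supseteq$, let $v = wb$ satisfy the three stated conditions. Since $w$ agrees with $d(1,\beta)$ on its first $|w|$ positions and $b > d_{|w|}$, we have $v >_{\textrm{lex}} d(1,\beta)$, so $v$ is forbidden. Every proper subword of $v$ falls into one of two classes. A proper subword contained in $w$ is itself a subword of $d(1,\beta)$; every one of its suffixes is a prefix of some suffix of $d(1,\beta)$, hence $\le_{\textrm{lex}} d(1,\beta)$ by Proposition~\ref{prop:carExpansion}, so this subword lies in the language. A proper subword containing the final letter $b$ has the shape $w''b$ for some strict suffix $w''$ of $w$; each of its own suffixes has the same shape $w'''b$ with $w'''$ a strict suffix of $w$, and so condition (c) puts each such suffix at $\le_{\textrm{lex}} d(1,\beta)$. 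Hence $v$ is minimal forbidden.

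For the inclusion $\subseteq$, suppose $v \in \CM(S_\beta)$. Since $v \notin \mathcal{L}(S_\beta)$, the suffix characterization forces some suffix of $v$ to exceed $d(1,\beta)$ lexicographically; but any proper suffix of $v$ is a proper subword, hence allowed by minimality, so it must be $v$ itself that is $>_{\textrm{lex}} d(1,\beta)$. Let $k$ be the first position where $v$ and $d(1,\beta)$ disagree, so $v_0 \cdots v_{k-1}$ is a prefix of $d(1,\beta)$ and $v_k > d_k$. If $k < |v| - 1$, then the proper prefix $v_0 \cdots v_k$ would already be $>_{\textrm{lex}} d(1,\beta)$ and hence forbidden, contradicting the minimality of $v$. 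Therefore $k = |v| - 1$, and setting $w := v_0 \cdots v_{k-1}$ and $b := v_k$ yields conditions (a) and (b). For (c), any strict suffix $w'$ of $w$ produces $w'b$ as a proper subword of $v$, which must be allowed, forcing $w'b \le_{\textrm{lex}} d(1,\beta)$.

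The argument is essentially routine once the suffix characterization of $\mathcal{L}(S_\beta)$ is available. The key structural step in both directions is the split of proper subwords of $v = wb$ into those contained in $w$ versus those containing the last letter $b$; this is what cleanly separates the role of the prefix hypotheses (a)--(b) from that of the suffix hypothesis (c). The only place a reader might have to pause is in justifying the suffix characterization itself, but this is a standard unfolding of the definition of $S_\beta$ and of Proposition~\ref{prop:carExpansion}.
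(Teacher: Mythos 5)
Your proof is correct, but it takes a genuinely different route from the paper's. You derive and then work entirely from the lexicographic criterion for the language --- $u\in\mathcal{L}(S_\beta)$ if and only if every suffix of $u$ is $\le_{\textrm{lex}} d(1,\beta)$ --- so that both inclusions reduce to bookkeeping about which suffix of $wb$ can exceed $d(1,\beta)$, with the first-disagreement-position argument handling $\subseteq$. The paper instead proves $\subseteq$ through Proposition~\ref{prop:codedsystem}: a minimal forbidden word is written as $ub$ with $u$ allowed, $u$ is decomposed as a concatenation $w_0b_0\cdots w_\ell b_\ell$ of code words, and the violation is localized in the final block $w_\ell b_\ell b$, after which minimality forces $u=w_\ell b_\ell$. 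The two mechanisms are equivalent in content (the suffix criterion and the coded presentation are two faces of the same standard fact about $\beta$-shifts), but yours is more self-contained relative to the printed definition of $S_\beta$, and it makes explicit the forward inclusion --- which the paper essentially just asserts --- via the clean split of proper subwords of $wb$ into subwords of $w$ versus suffixes $w''b$. The one step you treat too breezily is the ``if'' direction of your suffix characterization: to extend a word all of whose suffixes are $\le_{\textrm{lex}} d(1,\beta)$ to a bi-infinite point, each appended letter must simultaneously satisfy the constraints coming from \emph{every} suffix that is currently a prefix of $d(1,\beta)$ (e.g.\ take the minimum of the admissible next digits), not the continuation dictated by a single distinguished occurrence; this deserves a line of justification or a citation to the standard Parry criterion, on a par with the paper's appeal to Proposition~\ref{prop:codedsystem}.
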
   
\begin{proof}
By definition of the subshift $S_\beta$, any word  of the form $wb$ with  $w$ a prefix of $d(1, \beta)$,   $b > d_{|w|}$  is 
 forbidden. If $w'b \le  d(1,\beta)$ for any strict suffix of $w$, then $wb$ is minimal forbidden.
 
 Conversely, assume that $u \in \CL(S_{\beta})$ is a word such that  $ub \not\in \CL(S_{\beta})$ is a minimal forbidden word for some letter $b$. 
 By Proposition~\ref{prop:codedsystem}, it follows that $u$ can be written as a concatenation of the form 
 $$u= w_{0} b_{{0}} w_{1}b_{{1}} \cdots w_{\ell}  b_{\ell}$$
where  $w_{i}$ are prefixes of  $d(1,\beta)$, $b_{n_{i}} < d_{|w_{i}|}$, and    $b_{\ell}\le d_{|w_{\ell}|}$. Since $ub$ is not an allowable word and $d_{0} = \lfloor \beta \rfloor \ge b$, these imply that $b_{\ell} = d_{|w_{\ell}|}$ and $b> d_{|w_{\ell}|+1 }$.  It follows that $w_{\ell} b_{{\ell}}b \not\in \CL(S_{\beta})$.   Moreover since any strict suffix of $ub$ is allowable, we obtain the desired inclusion.       
\end{proof}

 We give a sufficient condition to ensure the existence of a minimal forbidden word  with certain bounds on the length. 
 \begin{lemma}\label{lem:MFWordGap}  Assume that  any  $i \in \{  i_1 +1,, i_1+2, \ldots, i_{2} \}$, satisfies  $d_{i} < d_{0}$.  
Then   for any $i_{1} < n < i_{2}$, there exists a minimal forbidden word of length between $n-i_{1}$ and $n+2$.   
\end{lemma}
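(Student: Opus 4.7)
The plan is to construct an explicit minimal forbidden word in $S_\beta$ whose length lies in the interval $[n-i_1,\, n+2]$. The natural candidate is
\[ wb := d_0 d_1 \cdots d_n d_0, \]
obtained by appending the maximal letter $d_0 = \lfloor\beta\rfloor$ to the length-$(n+1)$ prefix $w = d_0\cdots d_n$ of $d(1,\beta)$. Because $i_1 < n < i_2$ gives $n+1 \in \{i_1+1,\ldots,i_2\}$, the hypothesis ensures $d_{n+1} < d_0$, so $wb$ is a forbidden word of length $n+2$ by the characterization in Lemma~\ref{lem:CarMinimalWordBetaShift}.

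The word $wb$ may fail to be minimal, but the natural way to extract a minimal forbidden subword is to look at the largest suffix that remains forbidden. Define
\[ j^* := \max\bigl\{j \in \{0,1,\ldots,n\} : d_j d_{j+1}\cdots d_n d_0 >_{\mathrm{lex}} d(1,\beta)\bigr\}, \]
which is well-defined since $0$ lies in this set. I would then argue that $v := d_{j^*}d_{j^*+1}\cdots d_n d_0$ is a minimal forbidden word: forbiddenness is by definition of $j^*$, and for minimality every proper subword of $v$ either avoids the appended final $d_0$ (in which case it is a subword of $d(1,\beta)$, hence in $\CL(S_\beta)$) or has the form $d_j\cdots d_n d_0$ with $j>j^*$ (in which case the maximality of $j^*$ puts it $\le_{\mathrm{lex}} d(1,\beta)$, and iterating the same case split on its further sub-windows shows it is allowable).

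The length bound then follows from bounding $j^*$. For any $j \in \{i_1+1,\ldots,n\}$, we have $j \ge i_1+1$ and $j \le n < i_2$, so the hypothesis yields $d_j < d_0$; therefore the word $d_j d_{j+1}\cdots d_n d_0$ is strictly smaller than $d(1,\beta)$ already at position $0$, and no such $j$ belongs to the maximizing set. Consequently $j^* \le i_1$, and the length of $v$ satisfies $|v| = n - j^* + 2 \in [n-i_1+2,\, n+2] \subseteq [n-i_1,\, n+2]$, as required. The step I expect to require the most care when writing the full proof is the verification that $v$ is truly minimal forbidden, where one must be certain that the maximality of $j^*$ (which a priori only controls one endpoint lexicographic comparison) actually translates into full allowability of each proper subword of $v$ via the iterated case split described above.
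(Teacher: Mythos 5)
Your proposal is correct and follows essentially the same route as the paper: both start from the word $d_0\cdots d_n d_0$, use $d_{n+1}<d_0$ to see it is forbidden, and use the hypothesis $d_j<d_0$ for $i_1<j\le n$ to see that the suffix $d_{i_1+1}\cdots d_n d_0$ is allowed, which forces a minimal forbidden subword of length between $n-i_1$ and $n+2$. The only difference is cosmetic: the paper extracts the minimal forbidden word by citing Lemma~\ref{lem:useful} with $u=d_0\cdots d_{i_1}$, $v=d_{i_1+1}\cdots d_n$, $s=d_0$, whereas you locate it explicitly as the longest forbidden suffix via $j^*$; your minimality verification (every proper subword is either a subword of $d(1,\beta)$ or a suffix $d_j\cdots d_n d_0$ with $j>j^*$, all of whose suffixes are $\le_{\mathrm{lex}}d(1,\beta)$ by maximality of $j^*$) is sound.
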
   
\begin{proof}
Consider the word $d_0\cdots d_n\in\CL(S_\beta)$.
Since $d_{n+1} < d_0$, the word 
$d_0 \cdots d_n d_0$ is forbidden. However it follows from Proposition~\ref{prop:codedsystem} that   $d_{i_1+1} \cdots d_n d_0\in\CL(S_\beta)$, as each $d_j < d_0$ for $i_1< j\le n$.    
Thus it follows from Proposition~\ref{lem:useful} that  the word $d_0 \cdots d_n d_0$ contains a minimal forbidden word of length at least  $n-i_1$.
\end{proof}

\begin{corollary}\label{cor:CNLSSBetaShift}
For a $\beta$-shift, if  the set $\{ i \in \N:   d_{i} = d_{0}\}$ is finite, 
then the set of lengths of minimal forbidden words is relatively dense.  

In particular the subshift $S_\beta$ is not language stable. 
\end{corollary}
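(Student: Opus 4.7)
The plan is to combine the hypothesis with Lemma~\ref{lem:MFWordGap} to show that the set $LS(S_\beta)$ is actually syndetic (i.e.\ has bounded gaps), which is a stronger conclusion than merely failing to have zero lower uniform density.

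First I would use the hypothesis to fix a constant. Since $\{i\in\N : d_i = d_0\}$ is finite, there exists $i_1\in\N$ such that $d_i < d_0$ for every $i > i_1$. This is exactly the setting required to invoke Lemma~\ref{lem:MFWordGap}: for any integer $i_2 > i_1$ the interval $\{i_1+1,\dots,i_2\}$ satisfies the hypothesis, so for every $n$ with $i_1 < n < i_2$ there is a minimal forbidden word of $S_\beta$ whose length lies in $[n-i_1,\, n+2]$. Letting $i_2\to\infty$, we conclude that for every $n > i_1$ the set $LS(S_\beta)$ meets the interval $[n-i_1,\, n+2]$.

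Next I would translate this into a syndeticity statement. Given any integer $m \ge 1$, apply the previous fact with $n := m + i_1$, which is $> i_1$. The guaranteed minimal forbidden word has length in $[m,\, m+i_1+2]$, so every interval of length $i_1+3$ starting at some $m\ge 1$ contains an element of $LS(S_\beta)$. Thus $LS(S_\beta)$ is syndetic with gap at most $i_1+3$, and in particular has positive lower uniform density, namely at least $1/(i_1+3)$.

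Finally, since the definition of language stability in~\eqref{def:LSS2} requires $LS(S_\beta)$ to have zero lower uniform density, this positive lower density is incompatible with language stability, so $S_\beta$ is not language stable. The main (minor) obstacle is just being careful with the off-by-one bookkeeping between the choice of $n$ and the interval $[n-i_1,\,n+2]$ to verify that the gaps are uniformly bounded; once that is done, both the relative density of $LS(S_\beta)$ and the failure of language stability are immediate.
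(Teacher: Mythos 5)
Your proposal is correct and takes essentially the same route as the paper: both fix a threshold $i_1$ (the paper calls it $D$) beyond which $d_i<d_0$ and then invoke Lemma~\ref{lem:MFWordGap} to place a length of a minimal forbidden word within bounded distance of every sufficiently large integer, yielding syndeticity of $LS(S_\beta)$ and hence positive lower uniform density. Your version just spells out the bookkeeping that the paper compresses into the phrase ``at distance at most $\max(D,2)$ of any integer $n>D$.''
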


\begin{proof} 
Assume that for any index $i$ greater than $D$, $d_i < d_0$. Lemma~\ref{lem:MFWordGap} ensures there exists a length of a minimal forbidden word at distance at most $\max (D, 2)$ of  any  integer $n>D$.   
\end{proof}

It follows from Corollary~\ref{cor:CNLSSBetaShift} that we can restrict our focus to expansion sequences of $\beta$-shifts where the letter $d_0$ occurs  infinitely many times.

\begin{lemma}\label{lem:GapLength}
For a $\beta$-shift, if some nonempty word  $w$  is the prefix and suffix of some word  $d_{0} \cdots d_{n}$, 
then there is no minimal forbidden word  of length  $n+1$. 
\end{lemma}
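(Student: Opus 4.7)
The plan is to argue by contradiction using the characterization of minimal forbidden words in Lemma~\ref{lem:CarMinimalWordBetaShift}. Suppose $\tf$ were a minimal forbidden word of length $n+1$. By that lemma, $\tf$ would have the form $d_0 d_1 \cdots d_{n-1} b$ for some letter $b$ with $b > d_n$, and moreover every strict suffix $w'$ of the prefix $d_0 \cdots d_{n-1}$ would have to satisfy $w'b \le_{\textrm{lex}} d(1,\beta)$. The strategy is to use the overlap hypothesis on $w$ to exhibit one such strict suffix that fails this inequality.

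Setting $k := |w|$, where $1 \le k \le n$ since the two occurrences of $w$ inside $d_0 \cdots d_n$ must come from a proper border (otherwise the hypothesis would be satisfied trivially at every length and the statement would be vacuous), the equalities $w = d_0 \cdots d_{k-1} = d_{n-k+1} \cdots d_n$ immediately yield the positional matchings $d_i = d_{n-k+1+i}$ for $0 \le i \le k-1$. The only consequence I would actually extract is the endpoint identity $d_{k-1} = d_n$.

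I would then single out the strict suffix $w' := d_{n-k+1} \cdots d_{n-1}$ of $d_0 \cdots d_{n-1}$, which has length $k-1 < n$ and, by the positional matchings, coincides with $d_0 \cdots d_{k-2}$ (the empty word when $k=1$). Then $w'b = d_0 \cdots d_{k-2} b$, while $d(1,\beta)$ begins with $d_0 \cdots d_{k-2} d_{k-1} \cdots$. The two sequences agree in their first $k-1$ positions, and at position $k-1$ one compares $b$ with $d_{k-1} = d_n$. Since $b > d_n$ by construction, we obtain $w'b >_{\textrm{lex}} d(1,\beta)$, contradicting the allowability requirement from Lemma~\ref{lem:CarMinimalWordBetaShift}. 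Hence no such $\tf$ can exist.

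I do not anticipate any serious obstacle; the argument reduces entirely to extracting the identity $d_{k-1} = d_n$ from the border hypothesis and substituting it into the lexicographic comparison. The only care needed is in the corner case $k=1$, where $w'$ is empty and the comparison degenerates to $b \le d_0$, which already fails since the alphabet bound $b \le \lfloor \beta \rfloor = d_0$ combined with $d_n = d_0$ leaves no room for the strict inequality $b > d_n$. Conceptually, the heart of the lemma is that the border condition forces the forbidden letter $b$ (which must exceed $d_n$) to also exceed an occurrence of the same symbol earlier in $d(1,\beta)$, producing a shorter forbidden word inside the alleged minimal one.
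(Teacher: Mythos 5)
Your proof is correct and follows essentially the same route as the paper: both identify the border equality $d_{k-1}=d_n$ and then test the minimality condition of Lemma~\ref{lem:CarMinimalWordBetaShift} on the strict suffix $d_{n-k+1}\cdots d_{n-1}=d_0\cdots d_{k-2}$, obtaining $w'b>_{\textrm{lex}}d(1,\beta)$ from $b>d_n$. Your explicit treatment of the degenerate case $k=1$ (and the remark that $w$ must be a proper border) is a small bonus of care over the paper's terser write-up, but the argument is the same.
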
   
\begin{proof}
The word $w$ is of the form  $w = d_0 \cdots d_\ell$. By contradiction, assume there is a minimal forbidden word of length $n+1$. By Lemma~\ref{lem:CarMinimalWordBetaShift}, this word has the form $d_0 d_1 \cdots d_{n-1} b$ for some$ b > d_n =d_\ell$. Since any strict suffix of this word is allowable, the word $ d_0 d_1 \cdots d_{\ell-1}b$ is also allowable. 
But this is a contradiction, since $b >d_\ell$. 
\end{proof}

When  $d_{0} \cdots d_{n-1} = w \cdots w$, the words $w$ is said to {\em occur} in $d(1, \beta)$ and the index $n- |w|$ is  called an {\em occurence} of $w$. 
In this case, any prefix of $w$ also occurs in $d(1, \beta)$, and so we can apply  Lemma~\ref{lem:GapLength} for each length of the prefix $w$.  This means that there is no minimal forbidden word of length $n-(i-1)$ for any $ 0 \le i < |w|$, 
providing gaps in the  complement of  lengths of minimal forbidden words. 
Summarizing this, we have the following corollary: 
   
\begin{corollary}\label{cor:BetaShiftLSS}
For a $\beta$-shift, the complement of the set of lengths of minimal forbidden words  contains the set
$$ \bigcup_{w \textrm{ prefix of } d(1,\beta)} \{n + |w|:  n \textrm{ is an occurrence  of } w \textrm{ in } d(1, \beta) \}. $$
In particular, if  every prefix  of $d(1,\beta)$ occurs at least twice in $d(1,\beta)$, then $S_{\beta}$ is language stable. 
\end{corollary}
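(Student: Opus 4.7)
The plan is to deduce both assertions directly from Lemma~\ref{lem:GapLength}, which does essentially all of the combinatorial work. For the first assertion, I fix a nonempty prefix $w$ of $d(1,\beta)$ and an occurrence $n$ of $w$, so that $d_n d_{n+1}\cdots d_{n+|w|-1}=w$. Setting $N:=n+|w|-1$, the finite word $d_0 d_1\cdots d_N$ begins with $w$ (because $w$ is a prefix of $d(1,\beta)$) and ends with $w$ (by the definition of occurrence). Lemma~\ref{lem:GapLength} then gives immediately that there is no minimal forbidden word of length $N+1=n+|w|$, which is exactly the claimed inclusion.

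For the second assertion, the key observation is that a single nontrivial occurrence of a long prefix produces many consecutive non-MFW lengths at once, because \emph{every} prefix of that prefix is automatically witnessed to occur at the same position. Concretely, assume every prefix of $d(1,\beta)$ occurs at least twice, fix $k\in\N$, and set $w:=d_0 d_1\cdots d_{k-1}$. The hypothesis produces a nontrivial occurrence $n_0\geq 1$ of $w$. For each $j=1,2,\ldots,k$, the shorter word $d_0\cdots d_{j-1}$ is both a prefix of $d(1,\beta)$ and a prefix of $w$, and since $w$ starts at position $n_0$ it also occurs at position $n_0$. Applying the first assertion to each of these $k$ prefixes yields that no minimal forbidden word has length $n_0+j$ for any $j\in\{1,\ldots,k\}$. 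Hence $LS(S_\beta)$ is disjoint from $[n_0+1,n_0+k]$, which by the reformulation given immediately after~\eqref{def:LSS2} means $X_{n_0}=X_{n_0+k}$. Since $k$ was arbitrary, $S_\beta$ is language stable.

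There is no real obstacle here: the substantive combinatorics is packaged inside Lemma~\ref{lem:GapLength}, and the only genuine insight beyond invoking it is the nesting observation that the $|w|$ prefixes of $w$ all share the occurrence $n_0$, which upgrades one non-MFW length into a run of $|w|$ consecutive ones. The only bookkeeping to keep in mind is the convention (fixed just after Proposition~\ref{prop:carExpansion}) that when $d(1,\beta)$ is finite one replaces it by the infinite periodic expansion $d^{*}(1,\beta)$, ensuring prefixes of every length are available to feed into the hypothesis.
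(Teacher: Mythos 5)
Your proof is correct and follows essentially the same route as the paper: the first assertion is a direct application of Lemma~\ref{lem:GapLength} to the word $d_0\cdots d_{n+|w|-1}$, and the second uses exactly the paper's observation that every prefix of $w=d_0\cdots d_{k-1}$ shares the nontrivial occurrence $n_0$, producing a gap $[n_0+1,n_0+k]$ in $LS(S_\beta)$ for arbitrary $k$. No gaps to report.
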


Schmeling~\cite{Schmeling} shows that the sequence $d(1, \beta)$ is  generic for the measure of  maximal entropy in $S_\beta$ for  Lebesgue almost every $\beta\in\R$. In particular, any  prefix of  $d(1, \beta)$  occurs in  $d(1, \beta)$ on a set of indices of positive density. Combining this result with Corollary~\ref{cor:BetaShiftLSS}, we conclude that the set of $\beta$-shifts is large. 
\begin{corollary}\label{cor:BetaShiftLSSgeneric}
The set 
$$\{\beta >1: S_\beta  \textrm{ is language stable } \}$$
is a dense $G_\delta$ in $(1, +\infty)$ of full Lebesgue measure. 
\end{corollary}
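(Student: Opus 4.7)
The plan is to establish the three properties (full Lebesgue measure, density, and $G_\delta$) separately, with the full-measure claim being the main content and the other two following from it together with the continuity of $\beta\mapsto S_\beta$ noted earlier in the excerpt.

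For \emph{full Lebesgue measure}, I would invoke Schmeling's theorem, which asserts that for Lebesgue-a.e. $\beta>1$ the sequence $d(1,\beta)$ is generic for the unique measure of maximal entropy $\mu_\beta$ on $S_\beta$. The Parry measure $\mu_\beta$ has full topological support, so $\mu_\beta([w])>0$ for every $w\in\CL(S_\beta)$. By the pointwise ergodic theorem applied to the indicator of $[w]$, such a word then appears in $d(1,\beta)$ with positive frequency, and in particular infinitely often. Applied to the prefixes of $d(1,\beta)$ itself (each of which is an allowable word in $S_\beta$), this says that every prefix occurs at least twice in $d(1,\beta)$. By Corollary~\ref{cor:BetaShiftLSS}, $S_\beta$ is language stable, so the set in question has full Lebesgue measure in $(1,+\infty)$.

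\emph{Density} is then automatic: a subset of $(1,+\infty)$ of full Lebesgue measure must meet every nonempty open interval.

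For the \emph{$G_\delta$ property}, I would combine two ingredients. First, the class of language stable shifts is $G_\delta$ in the space of subshifts on a fixed alphabet with the Hausdorff topology: from the characterization ``$\forall k\,\exists n:\ X_n=X_{n+k}$'' one writes the class as $\bigcap_{k}\bigcup_{n}\{X:X_n=X_{n+k}\}$, and the inner condition depends only on $\mathcal{L}_{n+k}(X)$ so it is clopen in Hausdorff topology. Second, the excerpt recalls that $\beta\mapsto S_\beta$ is continuous in Hausdorff topology; on each interval $(K,K+1)$ the alphabet of $S_\beta$ is the constant set $\{0,\dots,K\}$, so pulling back the $G_\delta$ class through this continuous map gives a $G_\delta$ subset of $(K,K+1)$. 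Integer values $\beta=K\ge 2$ correspond to the full shift $\{0,\dots,K-1\}^{\Z}$, which is an SFT and hence trivially language stable, so these isolated points cause no problem in assembling the local $G_\delta$ subsets into a $G_\delta$ subset of $(1,+\infty)$. I expect the main obstacle to be this last bookkeeping step at integer points, where the alphabet of $S_\beta$ jumps, and the careful verification that the local $G_\delta$ structure survives across the discontinuity (which one can do either by embedding all $\beta$-shifts as subshifts of a common space like $\N^\Z$ and appealing to continuity there, or by a direct argument using that the integer points themselves belong to the set).
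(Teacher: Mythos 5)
Your argument is correct and follows essentially the same route as the paper: Schmeling's genericity theorem combined with Corollary~\ref{cor:BetaShiftLSS} yields full Lebesgue measure (hence density), and the $G_\delta$ property comes from pulling back the $G_\delta$ class of language stable shifts through the continuous parametrization $\beta\mapsto S_\beta$. You are merely more explicit than the paper about the bookkeeping at integer values of $\beta$, which the paper leaves implicit.
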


Hofbauer~\cite{Hofbauer} and  Walters~\cite{Walters} show that any $\beta$-shift admits a unique measure of maximal entropy, and hence has a characteristic measure. Moreover, Climenhaga and Thompson~ \cite{ClimenhagaThompson} show 
that the same holds for any factor of a $\beta$-shift, and so any factor of a $\beta$-shift also has a characteristic measure.

We conclude with a particular example that goes beyond shifts of finite type (see~\cite{ClimenhagaThompson} for the precise definitions).
\begin{example}[A specified/synchronized $\beta$-shift that is language stable]\label{ex:BetaShift}
We define an expansion sequence by induction using 2 sequences of words.
Consider the alphabet $\{0,1,2 \}$ and set 
$w_0 = 222$ and $u_0=111$. 
Define $w_{n+1} = w_{n}u_n w_n$ and fix some word $u_{n+1} \in \{0,1\}^*$ that is smaller (in lexicogarphic order) than $u_nw_n$. In particular, the length of this word is smaller than that of  $u_nw_n$.
Taking the limit of the worsd $w_n$, we obtain an infinite word $w$ in $\{0,1,2\}^\N$   

By construction, the sequence $w$ satisfies $ \sigma^s w <_{\textrm{lex}}w$ for any $s \ge 1$. Hence, by Proposition~\ref{prop:carExpansion}, the sequence $w$ is the expansion of some number $\beta>1$.    
Moreover, by construction, any prefix of $w$ occurs infinitely many times. It follows from Corollary~\ref{cor:BetaShiftLSS}, the associated subshift $S_\beta$ is language stable.  

Additionally, at each step $n$, we can choose $u_n$ to be a power of $1$. Then no $0$ occurs in the sequence and so the subshift $S_\beta$ is specified (see~\cite{Bertrand} or~\cite{Blanchard}). In particular if the sequence of powers of $1$ is increasing, then the sequence $w$ is not ultimately periodic and the subshift $S_\beta$ is not sofic. 

Similarly, if  $u_n$ is taken to be an increasing sequence of strings of $0$'s, the word $111$ never occurs in $w$ but is allowable in the subshift. Then the subshift $S_\beta$ is synchronizing but not specified (again, see~\cite{Bertrand} or~\cite{Blanchard}).
\end{example} 

\section{Aperiodic linear complexity shifts are language stable}
\label{sec:linear}

We start with a general result on sufficiently low complexity shifts that is useful for checking language stability. 
\begin{theorem}\label{theo:NScompLSS} 
Assume that  $(X, \sigma)$ is an aperiodic shift. 
\begin{enumerate}
    \item 
    \label{item:one-linear}
    If the complexity $p_X$ of $(X, \sigma)$ satisfies $p_X(n) = O(n)$, then the upper uniform density of the set of lengths of the  bispecial words is zero. 
    \item
        \label{item:two-linear}
If the complexity of the shift $(X, \sigma)$ satisfies 
$$\liminf_{n\to\infty} \bigl(p_X(n+1)-p_X (n) \bigr)< \infty,$$ then the lower density of the set of lengths of bispecial words is zero. 
\end{enumerate} 
\end{theorem}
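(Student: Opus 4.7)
The plan is to use Cassaigne's classical identity relating the second difference of the complexity function to the bilateral multiplicities of bispecial words. Setting $s(n) := p_X(n+1) - p_X(n)$, and defining for each bispecial word $w$ of length $n$ the bilateral multiplicity
$$m(w) := |\{(a,b) \in \CA^2 : awb \in \CL(X)\}| - L(w) - R(w) + 1,$$
where $L(w)$ and $R(w)$ count the left and right extensions of $w$ in $\CL(X)$, Cassaigne's identity asserts that $s(n+1) - s(n) = \sum_{w \in BS_n} m(w)$, where $BS_n$ denotes the bispecial words of length $n$. Since every right-special word contributes at least $1$ to $s(n)$, the elementary bound $|BS_n| \leq |RS_n| \leq s(n)$ holds, where $RS_n$ denotes the right-special words of length $n$.

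For part~\eqref{item:one-linear}, the hypothesis $p_X(n) = O(n)$ combined with Cassaigne's theorem yields a uniform bound $s(n) \leq M$ for some constant $M$. Hence $|BS_n| \leq M$ uniformly, and because $L(w), R(w) \leq |\CA|$, the bilateral multiplicities also satisfy $|m(w)| \leq C$ for a constant depending only on $M$ and $|\CA|$. I would then track the evolution of right-special words across consecutive levels: because the length-$n$ suffix of any right-special word of length $n+1$ is itself right-special, the right-special words form a collection of at most $M$ \emph{threads}, which can split or die only at bispecial lengths. For any window $[n, n+k]$, the signed change $|RS_{n+k}| - |RS_n|$ is bounded by $2M$, independently of $n$ and $k$. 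A careful accounting of the thread splittings and deaths, driven by the bilateral multiplicities through Cassaigne's identity, then bounds the number of bispecial lengths within the window by $o(k)$ uniformly in $n$, yielding the upper uniform density claim.

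For part~\eqref{item:two-linear}, the hypothesis produces a subsequence $(n_j)_{j\in \N}$ along which $s(n_j) \leq M$, so $|BS_{n_j}| \leq M$ along the subsequence. The key structural observation is that the length-$n_j$ suffix of any bispecial word of length at least $n_j$ is itself bispecial, since right- and left-specialness are both preserved under taking suffixes (if $a w, b w \in \CL(X)$ then the same holds for the corresponding suffix of $w$, and dually on the right). Consequently, every bispecial word of length at least $n_j$ descends from one of the at most $M$ bispecial words of length $n_j$. A counting argument exploiting this genealogy shows that the set of bispecial lengths in $[1, n_j]$ has cardinality $o(n_j)$ as $j \to \infty$, which gives lower density zero.

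The main obstacle lies in part~\eqref{item:one-linear}: upper uniform density zero is substantially stronger than upper density zero, since it rules out local clustering of bispecial lengths in every window. While the bounds $|BS_n| \leq M$ and $|m(w)| \leq C$ are uniform, converting them into a uniform window bound requires a delicate accounting of the interplay between bispecial activity and the right-special thread structure. The inequality $|RS_{n+k}| - |RS_n| \leq 2M$ only controls the net change; the proof must further exploit Cassaigne's identity to prevent compensating positive and negative contributions within the window from concealing many bispecial events.
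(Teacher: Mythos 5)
Your approach is genuinely different from the paper's, but both parts contain gaps that are not merely technical. For part~\eqref{item:one-linear}, the strategy of tracking bispecial words through Cassaigne's second-difference identity cannot succeed, because that identity sees a bispecial word only through its bilateral multiplicity: a \emph{neutral} bispecial word $w$ with $m(w)=0$ contributes nothing to $s(n+1)-s(n)$ and causes no splitting or death of right-special threads, so it is invisible to every quantity your accounting tracks. Symptomatically, your argument for part~\eqref{item:one-linear} never invokes aperiodicity, yet that hypothesis is essential: for the orbit closure of the sequence with a single $1$ and all other entries $0$ (the example given after Corollary~\ref{cor:LinearLSS}), one has $p(n)=n+1$, so $s(n)\equiv 1$ and all second differences vanish, while $0^m$ is bispecial for every $m$ and the set of bispecial lengths is all of $\N$. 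An argument that uses only $s$ and its differences would apply verbatim to that example and reach a false conclusion, so the step ``a careful accounting \dots bounds the number of bispecial lengths within the window by $o(k)$'' is not a fillable gap but a dead end. For part~\eqref{item:two-linear}, the ``key structural observation'' is false: left-specialness is inherited by \emph{prefixes}, not suffixes (from $aw,bw\in\CL(X)$ one obtains two left-extensions of any prefix of $w$, but only the single left-extension $w_{j-1}$ of the suffix $w_j\cdots w_{|w|}$), so a suffix of a bispecial word is right special but need not be bispecial. Moreover, even granting that observation, controlling the bispecial words of length at least $n_j$ says nothing about the number of bispecial lengths in $[1,n_j]$, which is what lower density zero requires.

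For comparison, the paper's proof uses aperiodicity through the Fine--Wilf theorem. Cassaigne's theorem gives at most $K$ left-special and at most $K$ right-special words of each length $n$ (respectively, of each length $n$ in an infinite set $A$, for part~\eqref{item:two-linear}). Every bispecial word of length in $[n+1,n+k]$ has a left-special prefix and a right-special suffix of length $n$, so if that window contained more than $K^2$ bispecial lengths, the pigeonhole principle would produce two bispecial words $\tf_i,\tf_j$ of distinct lengths sharing both their length-$n$ prefix and their length-$n$ suffix; the resulting self-overlap forces, via Fine--Wilf, an $\ell$-th power of a word of length at most $k$ with $\ell\ge (n-k)/k$, contradicting aperiodicity once $n>(p+1)k$. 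To repair your argument you would need an input of this kind --- one that detects neutral bispecial words and actually uses aperiodicity --- at which point you would essentially be reproducing the paper's proof.
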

\begin{proof}
To prove Part~\eqref{item:one-linear}, 
by Cassaigne's Theorem~\cite{Cas96}, a shift $(X, \sigma)$ satisfies $p_X(n) = O(n)$ if and only if there exists some $K\in\N$ such that $p_X(n+1)-p_X(n) \leq K$ for all $n\in\N$.  
It follows that for each $n\in\N$, there are at most $K$ right special (respectively, left special) words of length $n$.  

We proceed by contradiction. 
Assume that  the set $F$ of  lengths of the bispecial words  of $X$ has upper uniform density $\alpha$ for some $\alpha > 0$. 
Choose $k\in\N$ such that $k \alpha/2 > K^{2}$.   
Since  $X$ is aperiodic, there is some power $p>0$ such that  for any word $u$ of length at most  $k$, the word  $u^{p}$ is forbidden. 
By assumption, we can choose sufficiently large $n\in\N$, in particular greater than  $(p+1)  k $,   such that the interval $[n+1, n+k]$ contains at least $k \alpha/2$ different lengths of bispecial words.  
Let $\tf_{1}, \ldots, \tf_{\lfloor k \alpha/2\rfloor}$  denote these bispecial words.

By construction, each prefix  and  each suffix of each $\tf_{i}$ is a  special word.  Thus by the Pigeonhole Principle, 
there are two indices  $ 0<i<j \leq  K^{2} +1$ such that $\tf_{i}$ and $\tf_{j}$ share the same prefix $\tp$ and share the same suffix  $\ts$ of length $n$.   
It follows that  the suffixes  of $\tp$ of lengths $ 2n- |\tf_{i}|$  and  $ 2n- |\tf_{i}|$  are both prefixes    of  $\ts$. Thus, by the Fine-Wilf Theorem~\cite{FW}, the largest of these prefixes, of length at least $n-k$, is of the form $\tu^{\ell} \tu'$ where $\tu$ and $\tu'$ are words such that 
$|\tu| = ||\tf_{j}|-|\tf_{i}|| (\le k ) $,  
$|\tu'| <  ||\tf_{j}|-|\tf_{i}||$, and 
$$\ell \ge \lfloor (n-k) /||\tf_{j}|-|\tf_{i}|| \rfloor \ge (n-k)/k.$$    
However, this contradicts the choice of $n$.

By the assumption in Part~\eqref{item:two-linear}, 
the set $A= \{n \colon   p_{X}(n+1)-p_{X}(n) \le  K \}$ is infinite for some $K\in\N$. Hence  there are at most $K$ right special  words  of length $n \in A$, and the same bound holds for the number of left special words.    Again, we proceed by contradiction, assuming that  the set $F$ of  lengths of the bispecial words  of $X$ has lower density $\alpha$ for some $\alpha > 0$. 
We choose $k\in\N$ as in Part~\eqref{item:one-linear}, 
and note that again there exist arbitrarily large integers $n \in A$, and in particular larger than $(p+1)  k$, such that the interval $[n, n+k]$ contains at least $k \alpha/2$ different lengths of bispecial words.
We then continue exactly as in Part~\eqref{item:one-linear}. 
\end{proof}

\begin{corollary}\label{cor:LinearLSS}   
Any aperiodic shift with  complexity satisfying 
$$\liminf_{n\to\infty} \left(p_X(n+1)-p_X (n) \right)< \infty,$$ is language stable. 
\end{corollary}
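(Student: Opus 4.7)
The plan is to derive this as a near-immediate consequence of Part~\eqref{item:two-linear} of Theorem~\ref{theo:NScompLSS}, using the structural correspondence between minimal forbidden words and bispecial words recalled in Section~\ref{sec:LSS}. Applying that part of the theorem to the aperiodic shift $(X,\sigma)$ under the hypothesis $\liminf_{n\to\infty}(p_X(n+1)-p_X(n))<\infty$, I obtain that the set $F\subseteq\N$ of lengths of bispecial words of $X$ has zero lower density.

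Next, I would invoke the observation from \cite{BMR} recalled in Section~\ref{sec:LSS}: every minimal forbidden word of length at least three has the form $aub$ with $a,b\in\CA$ and $u\in\CL(X)$ bispecial. This gives the inclusion
\[
LS(X)\subseteq\{1,2\}\cup(F+2),
\]
and since adjoining a finite set and translating by $2$ do not alter the lower density, $LS(X)$ itself has zero lower density.

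Finally, to conclude language stability in the sense defined in Section~\ref{sec:LSS}, I would pass from lower density to lower uniform (Banach) density. For any $A\subseteq\N$ and any $k\in\N$ the elementary inequality
\[
\inf_{n\in\N}\frac{|A\cap[n+1,n+k]|}{k}\leq\frac{|A\cap[1,k]|}{k}
\]
holds, so taking $\liminf_{k\to\infty}$ shows that the lower uniform density is dominated by the lower density. Hence $LS(X)$ has zero lower uniform density, which is exactly the defining property of a language stable shift.

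The entire substantive content of this argument already lies in Theorem~\ref{theo:NScompLSS}; the corollary is a translation between two notions (minimal forbidden words versus bispecial words) and between two density conventions (lower versus lower uniform), so I do not anticipate any genuine obstacle beyond carefully keeping these notions distinguished.
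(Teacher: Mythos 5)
Your proof is correct and follows essentially the same route as the paper's: the paper likewise deduces the corollary from Part~(2) of Theorem~\ref{theo:NScompLSS} together with the fact that a minimal forbidden word has the form $awb$ with $w$ bispecial. You simply make explicit the length bookkeeping and the (correct) observation that zero lower density implies zero lower uniform density, details the paper leaves implicit.
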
 
It follows immediately  that any aperiodic shift with linear complexity is language stable.

\begin{proof}
A minimal forbidden word has the form $awb$ for some letters $a,b$ in the alphabet and some bispecial word $w$ in the language.  By Theorem~\ref{theo:NScompLSS}, the set of lengths of bispecial words has lower density zero, and language stability follows. 
\end{proof}

We note that the assumption of aperiodicity in Corollary~\ref{cor:LinearLSS} is necessary, as can be seen by  considering 
the closure under the shift of the sequence with a single $1$ and all other entries $0$ (note that this system contains the fixed point of all $0$s).  
The complexity of this system satisfies $p(n) = n+1$ for all $n\in\N$, while the words $1 0^m1$ are minimal forbidden words for all $m\in\N$ and so the system is not language stable.

\begin{example}[An aperiodic non-language stable shift with subquadratic complexity] 
Let $\tau$ be the substitution defined by $0 \mapsto 010$ and $ 1 \mapsto 11$, 
and let $(X, \sigma)$ be the associated shift defined by $x\in X$ if every word in $x$ is a subword of $\tau^n(0)$ for some $n\ge 0$. 
As we consider the fixed point of $0$, this shift is aperiodic, 
and the complexity of the shift is approximately 
$n \log_2 \log_2 n$. 
Cassaigne~\cite[Section 4.4]{Cas97} shows that for any $\ell$ with $\ell \ge 3$ not a power of $2$, 
this shift contains a minimal forbidden word of length $\ell$ (precisely a non-strict bispecial word) and so this shift is not language stable.  
\end{example} 
We note that the shift defined in this example is not minimal.  
This leads to a natural question, namely if every minimal shift is language stable.  However, we believe that this is unlikely to hold, and so we ask: 

\begin{question}
\label{question:complexity-theshold}
Assuming that not every minimal shift is language stable, what is the complexity threshold for the existence of a minimal shift that is not language stable? 
\end{question}

\end{document}